\documentclass[11pt]{article}

\makeatletter
\DeclareRobustCommand{\qed}{%
  \ifmmode 
  \else \leavevmode\unskip\penalty9999 \hbox{}\nobreak\hfill
  \fi
  \quad\hbox{\qedsymbol}}
\newcommand{\openbox}{\leavevmode
  \hbox to.77778em{%
  \hfil\vrule
  \vbox to.675em{\hrule width.6em\vfil\hrule}%
  \vrule\hfil}}
\newcommand{\qedsymbol}{\openbox}
\newenvironment{proof}[1][\proofname]{\par
  \normalfont
  \topsep6\p@\@plus6\p@ \trivlist
 \item[\hskip\labelsep\itshape
    #1.]\ignorespaces
}{%
  \qed\endtrivlist
}
\newcommand{\proofname}{Proof}
\makeatother

\usepackage[utf8]{inputenc}   
\usepackage{bm}

\usepackage{color}
\usepackage{latexsym}
\usepackage{dsfont}
\usepackage{amssymb}
\usepackage{comment}
\usepackage{graphicx}
\usepackage{amsmath,amsfonts,amssymb,theorem,euscript,array,enumerate,amsfonts,mathrsfs}
\usepackage{hyperref}
\usepackage{appendix}
\usepackage{caption}
\usepackage[T1]{fontenc}
\usepackage{babel}
\numberwithin{equation}{section}
\usepackage{bbm}
\usepackage{subfigure}
\usepackage{color}
\usepackage{stmaryrd}

\usepackage[algo2e,ruled,vlined]{algorithm2e} 
 \usepackage{algorithm}
\usepackage{algorithmicx}
\usepackage{ marvosym }
\usepackage{hyperref}
\usepackage{bm}
\usepackage{xcolor, soul}

\usepackage{mathtools}
\mathtoolsset{showonlyrefs}

\usepackage{comment}

\usepackage{tikz}
\usetikzlibrary{fit,matrix,chains,positioning,decorations.pathreplacing,arrows}

\usepackage{mathtools}

\usepackage{geometry}
\usepackage[normalem]{ulem}

\newcommand{\pr}{\text{pr}}

\def \b1{\bf{1}}

\def \N{\mathbb{N}}
\def \R{\mathbb{R}}

\def \M{\mathbb{M}}

\def \E{\mathbb{E}}
\def \F{\mathbb{F}}

\def \P{\mathbb{P}}

\def \S{\mathbb{S}}
\def \W{\mathbb{W}}

\def \mrH{\mathrm{H}}

\def \d{\mathrm{d}}

\def\esssup_#1{\underset{#1}{\mathrm{ess\,sup\, }}}

\def\argmin_#1{\underset{#1}{\mathrm{argmin\, }}}
\def\argmax_#1{\underset{#1}{\mathrm{argmax\, }}}

\def\dm#1{\frac{\delta}{\delta m}}

\def \Ac{{\cal A}}
\def \Bc{{\cal B}}
\def \Cc{{\cal C}}

\def \Ec{{\cal E}}
\def \Fc{{\cal F}}

\def \Ic{{\cal I}}
\def \Kc{{\cal K}}

\def \Pc{{\cal P}}

\def \Mc{{\cal M}}
\def \Nc{{\cal N}}

\def \Sc{{\cal S}}
\def \Tc{{\cal T}}
\def \Uc{{\cal U}}
\def \Vc{{\cal V}}
\def \Wc{{\cal W}}

\def \d{\mathrm{d}}

\def\beqs{\begin{eqnarray*}}
\def\enqs{\end{eqnarray*}}
\def\beq{\begin{eqnarray}}
\def\enq{\end{eqnarray}}

\newcommand{\red}[1]{\textcolor{red}{#1}}

\def\rd#1{{\color{red}#1}}

\def\red#1{{\color{red}#1}}
\def\ora#1{{\color{orange}#1}}

\newtheorem{Theorem}{Theorem}[section] 
\newtheorem{Definition}[Theorem]{Definition} 
\newtheorem{Proposition}[Theorem]{Proposition}
\newtheorem{Assumption}[Theorem]{Assumption}
\newtheorem{Lemma}[Theorem]{Lemma}

\newtheorem{Remark}[Theorem]{Remark}

\title{
Learning operators on labelled conditional distributions with applications to mean field control of non exchangeable systems
}

\author{
Samy MEKKAOUI\footnote{École Polytechnique, CMAP, 
\sf samy.mekkaoui at polytechnique.edu. 
This author is supported by the S-G Chair ``Financial Risks'' and the Qube-RT Chair ``Deep Finance and Statistics'', samy.mekkaoui at polytechnique.edu} 
\and 
Huy\^en PHAM\footnote{École Polytechnique, CMAP, 
\sf huyen.pham at polytechnique.edu. 
This author is supported by the BNP Paribas Chair ``Futures of Quantitative Finance'', the Chair ``Financial Risks'', by FiME (Laboratory of Finance and Energy Markets), and the EDF–CACIB Chair ``Finance and Sustainable Development'', huyen.pham at polytechnique.edu}
\and
Xavier WARIN \footnote{EDF R\&D, and FiME, xavier.warin at edf.fr }
}

\begin{document}

\maketitle
\begin{abstract}
We study the approximation of operators acting on probability measures on a product space with prescribed marginal. 
Let $I$ be a label space endowed with a reference measure $\lambda$, and define $\Mc_\lambda$ as the set of probability measures on $I\times\R^d$ with first marginal $\lambda$.  By disintegration, elements of $\Mc_\lambda$ correspond to families of labeled conditional distributions. Operators defined on this constrained measure space arise naturally in mean-field control problems with heterogeneous, non-exchangeable agents.

Our main theoretical result establishes a universal approximation theorem for continuous operators on $\Mc_\lambda$. 
The proof combines cylindrical approximations of probability measures with a DeepONet-type branch–trunk neural architecture, yielding finite-dimensional representations of such operators. We further introduce a sampling strategy for generating training measures in $\Mc_\lambda$, enabling practical learning of such conditional mean-field operators.  
 
We apply the method to the numerical resolution of mean-field control problems with heterogeneous interactions, thereby extending previous neural approaches developed for  homo\-geneous (exchangeable)  systems. Numerical experiments illustrate the accuracy and computa\-tional effectiveness of the proposed framework.

\end{abstract}

\vspace{5mm}

\noindent {\bf MSC Classification}:  49N80, 68T07, 91B69

\vspace{5mm}

\noindent {\bf Key words}: Non exchangeable mean field systems; mean field neural networks; learning on Wasserstein space; DeepONet


\section{Introduction}

\paragraph*{Motivation of MFC with non exchangeable systems.}
Mean-field control provides a tractable framework for modeling and optimizing large interacting particle systems. In the classical setting, exchangeability  of agents allows the population to be described by a single marginal distribution, representing the law of a representative agent, and the associated control problems depend only on this evolving law. This structure underlies standard formulations of mean-field games and mean-field control, see \cite{benetal13}
\cite{carmona_probabilistic_2018a}, \cite{carmona_probabilistic_2018b}. 

In many applications, however, agents are heterogeneous and cannot be treated as exchangeable. Agents may differ through labels, types, spatial indices, or intrinsic parameters, and 
heterogeneous interactions are often modeled through graphons or related structures; see e.g. \cite{bayraktar2023graphonAAP}, \cite{caihua21SICON}, \cite{coppini2024nonlinearSPA}, \cite{jabin2025mean}. In such non-exchangeable systems, the population is no longer characterized by a single marginal distribution. Instead, it is described by a family of probability distributions indexed by a label space. Equivalently, the population law is a probability measure on a product space with prescribed first marginal, see \cite{lacker2023label}.  
This reformulation leads naturally to operators acting on constrained measure spaces rather than on a single probability distribution.

The efficient approximation of such operators is the central objective of this work. 
More precisely, hetero\-geneous mean-field control problems lead to nonlinear mappings that associate to a population law its corres\-ponding value function, decoupling field, or feedback control. These mappings act on families of conditional distributions and therefore on probability measures on a product space with prescribed marginal. We formulate this approximation problem in a functional-analytic framework and develop a neural operator methodology adapted to this structure.

\paragraph*{Operators on labelled conditional distributions.} Let $I$ be a compact label space, say $I$ $=$ $[0,1]$, endowed with a reference probability measure, e.g.,  the uniform distribution. We consider the constrained space 
\begin{align}
\Mc_\lambda & := \; \big\{ \mu \in \Pc_2(I\times\R^d): \pr_1\sharp \mu = \lambda \big\},     
\end{align}
where $\Pc_2(I\times\R^d)$ is the Wasserstein space of square integrable  probability measures on $I\times\R^d$, $I \times \R^d \ni (u,x) \mapsto \pr_{1}(u,x)=u$ denotes the projection mapping on the first coordinate and $\sharp$ denotes the pushforward measure. By disintegration, each $\mu$ $\in$ $\Mc_\lambda$  can be written as 
\begin{align}
\mu(\d u,\d x) &= \; \lambda(\d u) \mu^u(\d x),  
\end{align}
where $(\mu^u)_{u\in I}$ is a $\lambda(\d u)$-a.e unique family of labelled conditional distributions, valued in
\begin{align}
    L^2_{\lambda}(I ; \Pc_2(\R^d)) := \big \lbrace I \ni u \mapsto \mu^u \in \Pc_2(\R^d) \text{ measurable and } \int_{I} \Wc_2(\mu^u,\delta_0) \lambda(\d u ) < \infty \big \rbrace \red{,}
\end{align}
where $\Wc_2$ denotes the $2$-Wasserstein distance.
We study operators of the form 
\begin{align} \label{operator}
\vartheta \; : \; \Mc_\lambda  \; \ni \; \mu \;  \longmapsto \; V(\cdot,\cdot,\mu) \in L^2(\mu),  
\end{align}
for some function $V$ on $I\times\R^d\times\Mc_\lambda$ with quadratic growth w.r.t. the first two arguments, and $L^2(\mu)$ is the space of square integrable functions w.r.t. $\mu$ $\in$ $\Mc_\lambda$.  
Such operators arise naturally in heterogeneous mean-field control problems, for instance as decoupling fields associated with forward–backward stochastic systems in maximum principle, see \cite{kharroubi2025stochastic}, \cite{cao2025graphon}, or feedback maps obtained from dynamic programming, see \cite{decrescenzo2024mean}. 

Unlike classical neural operator learning, the domain here is an infinite-dimensional space of measures subject to a marginal constraint. Preserving this structural constraint is essential both for theoretical consistency and for numerical stability.

\paragraph*{Our main contributions.} The main contributions of this work are fourfold.
\begin{enumerate}
\item First, we introduce a neural operator framework tailored to operators defined on the constrained Wasserstein space 
$\Mc_\lambda$. The construction combines cylindrical approximations of probability measures with a DeepONet-type branch–trunk architecture, yielding finite-dimensional repre\-sentations that are compatible with the marginal constraint 
$\pr_1\sharp \mu = \lambda$. 
\item Second, we establish a universal approximation theorem for continuous operators $\vartheta$ as in \eqref{operator}. More precisely, we prove that the proposed architecture is dense in the class of such operators with respect to the natural topology induced by the Wasserstein distance. The proof integrates approximation results for probability measures with neural operator theory and shows that the marginal constraint can be preserved at the approximation level.  
\item Third, we develop a constructive sampling procedure for generating training measures in $\Mc_\lambda$. 
The method enforces the prescribed marginal on the label space while producing diverse families of conditional laws, thereby enabling practical training of conditional mean-field neural operators in a manner consistent with the theoretical framework. 
\item Finally, we apply the proposed methodology to the numerical resolution of mean-field control problems with non-exchangeable interactions. Relying on maximum principle and dynamic programming formu\-lations developed for heterogeneous systems, we approximate decoupling fields, value functions, and feedback controls by conditional mean-field neural operators and use them to solve the associated forward-backward stochastic differential equations and HJB equations. This extends neural mean-field control methods previously developed for homogeneous (exchangeable) systems to the hetero\-geneous setting. 
\end{enumerate}

\paragraph*{Related work.} The present work lies at the intersection of neural operator learning, approximation on probability measure spaces, and mean-field control. 

Neural operator architectures aim at approximating mappings between infinite-dimensional spaces and have been successfully applied to the numerical solution of partial differential equations. DeepONet \cite{lu2021learning}, \cite{lanthaler2022error} provide  universal approximation results for nonlinear operators between Banach spaces. 
Related operator-learning frameworks include Fourier neural operators \cite{li2021fourier} and subsequent developments in operator approximation theory. These approaches, however, are formulated for operators acting on function spaces. In contrast, we consider operators defined on the constrained Wasserstein space $\Mc_\lambda$, whose elements are probability measures on a product domain with prescribed marginal. Extending operator-learning techniques to this setting requires combining neural operator theory with measure-theoretic representations compatible with the marginal constraint.  

Learning and approximation on spaces of probability measures have been studied through permutation-invariant architectures such as DeepSets \cite{zaheer2017deepsets}, see \cite{gerlauphawar22}. 
Cylindrical approximations of measures and universal approximation results on Wasserstein spaces have been investigated in \cite{cucetal19}, 
\cite{guophaweiSPA} and \cite{phamwarin23}. These approaches typically address functions defined on unconstrained spaces of measures.

Numerical methods for mean-field control and mean-field games include PDE or probabilistic schemes and neural network approaches for solving forward–backward systems and HJB equations; see, e.g., \cite{ruttoto},  
\cite{carlau22}, \cite{pham2022mean}, \cite{reistozha}. In the non-exchangeable settings, the associated optimality systems involve operators acting on conditional distributions rather than on a single marginal law. To the best of our knowledge, neural operator approximations for such operators on constrained measure spaces arising in heterogeneous mean-field control have not been investigated.

\paragraph*{Outline of the paper.} The remainder of the paper is organized as follows. In Section \ref{sec:operator}, we introduce the functional framework for operators defined on the constrained Wasserstein space $\Mc_\lambda$, present the conditional mean-field neural operator architecture, and establish the universal approximation theorem. We also describe the sampling procedure for generating training measures in $\Mc_\lambda$ and the associated training methodology. 
Section \ref{sec:num} provides numerical experiments illustrating the approximation properties of the proposed neural operators for representative conditional mean-field functionals. In Section \ref{sec:appli}, 
we apply the method to the numerical resolution of mean-field control problems with non-exchangeable interactions. We describe algorithms based on maximum principle and dynamic programming formulations and approximate the associated decoupling fields, value functions, and feedback controls using conditional mean-field neural operators.

\paragraph*{Notations.}\label{sec: notations}
\begin{enumerate} 
    \item [$\bullet$] We denote by $\Pc_2(\R^d)$ the Wasserstein space of square integrable probability measures equipped with the 2-Wasserstein distance $\Wc_2$. 
    Given $\mu \in \Pc_2(\R^d)$, we denote by $L^2(\mu)$ the space of measurable functions on $\R^d$ s.t
    \begin{align}
       | \phi|_{\mu}^2 := \int_{\R^d} |\phi(x)|^2 \mu(\d x) < + \infty.
    \end{align}
    Given vector-valued maps $f,g \in L^2(\mu)$, we denote  $\langle f, g \rangle_{L^2(\mu)}:=  \int_{\R^d} f(x) \cdot g(x) \mu(\d x)$ as their inner product where $\cdot$ is the inner product between vectors. 
   Given $\mu \in \Pc_{2}(\R^d)$, and $\phi \in L^2(\mu)$, we set $\E_{X \sim \mu} [\phi(X) ] := \int_{\R^d} \phi(x) \mu(\d x)$, where $X$ denotes a random variable defined on some  probability space $(\Omega,\Fc,\P)$. We denote by $\P_{X}$ the law of $X$ under $\P$.  Given a measurable map $\phi : \R^d \to \R^k$ and a measure $\mu \in \Pc_2(\R^d)$, we denote by $\langle \phi,\mu \rangle := \int_{\R^d} \phi(x) \mu(\d x)$.
   \item [$\bullet$] Given an horizon time $T > 0$ and a normed vector space $(E, \lVert \cdot \Vert_E)$, we denote by $\Cc([t,T]; E)$ the space of continuous maps from $[t,T]$ into $E$ endowed with Borel $\sigma$-algebra and its supremum norm $\lVert \omega \rVert_{\Cc[t,T];E)} := \underset{t \leq s \leq T}{\text{ sup }} \lVert \omega_s \rVert_E$. 
    When $E = \R^d$, we will only write $\Cc^d_{[t,T]}$ and when $d=1$, only $\Cc_{[t,T]}$. We denote by $\W_T$ the Wiener measure on $\Cc^d_{[0,T]}$. We also denote by $\Cc(E)$ the space of continuous functions on $E$ into $\R$.
\end{enumerate}

\section{Operator learning of conditional mean field functionals} \label{sec:operator}

Given a function $V$ on $I \times \R^d  \times \Mc_{\lambda}$ valued in $\R^p$, with quadratic growth condition w.r.t the first two arguments, we aim to approximate the infinite dimensional map
\begin{align}
    \Vc : \mu \in \Mc_{\lambda} \mapsto V(\cdot,\cdot,\mu) \in L^2(\mu),
\end{align}
called non exchangeable mean field function, by a map $\Nc$ constructed by some combinations of neural networks. The mean-field neural network $\Nc$ takes input of two parts: $\mu$ a probability measure on $\Mc_{\lambda}$ and $(u,x)$ in the support of $\mu$ and outputs $\Nc(\mu)(u,x)$. The quality of this approximation is measured by the error
 \begin{align}
        L(\Nc) := \int_{\Mc_{\lambda}} \Ec_{\Nc}(\mu) \rho(\d \mu),
\end{align}
with 
    \begin{align}
        \Ec_{\Nc}(\mu) : = | \Vc(\mu) - \Nc(\mu)|^2_{\mu} = \E_{(U,X) \sim \mu} \Big[ | V(U,X,\mu) - \Nc(\mu)(U,X) |^2 \Big],
    \end{align}
and where $\rho$ is a probability measure over the Borel space $\Mc_{\lambda}$, called training measure. The learning of the mean-field functional $\Nc$ will then be performed by minimizing over the parameters of the neural network $\Nc$ the loss function
\begin{align}
L_M(\Nc) := \frac{1}{M} \sum_{m=1}^{M} \Ec_N(\mu^{(m)}),
\end{align}
where $\mu^{(m)}$, $m \in \llbracket 1,M \rrbracket$ are training samples of $\rho$.

\subsection{Neural network approximation}

We state a universal approximation theorem which will rely on the use of DeepONet architecture,  the cylindrical structure and their approximation results on Wasserstein space used to learn operators on $\Mc_{\lambda}$. The universal approximation theorem is stated with an $L^2$-distance, which is the one used in practice during the training process. 

\begin{Theorem}\label{thm : universal_approximation_theorem}
    Let $\rho$ be a probability measure on the Borel space $\Mc_{\lambda}$ and $V$ a continuous map from $I \times \R^d \times \Mc_{\lambda}$ into $\R^q$ such that $\lVert V \rVert^2_{L^2(\rho)} := \int_{\Mc_{\lambda}} |V(\cdot,\cdot,\mu)|^2_{\mu} \rho(\d \mu) < \infty$. Then, for all $\epsilon > 0$, there exists $J,r \in \N^{\star}$, maps $\varphi_1,\ldots,\varphi_J \in \Cc(I \times \R^d)$, trunk nets $(\Tc_k)_{1 \leq k \leq r}$ from $I \times \R^d$ into $\R$ and  branch nets $(\Bc_k)_{1 \leq k \leq r}$ from $\R^J \to \R^q$ such that  
    \begin{align}
        \int_{\Mc_{\lambda}} \E_{(U,X) \sim \mu} \Big[ \big| V(U,X,\mu) -  \sum_{k=1}^{r} \Tc_k(U,X) \Bc_{k}(  \Phi_J(\mu))     \big|^2 \Big] \rho(\d \mu) \leq \epsilon,
    \end{align}
     where $\Phi_J(\mu) := \big( \langle \varphi_1,\mu \rangle, \ldots, \langle \varphi_J,\mu \rangle \big) \in \R^J$.
\end{Theorem}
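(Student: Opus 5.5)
The plan is a standard three-stage reduction: truncate to a compact set of measures and a compact state region, replace the measure argument by finitely many cylindrical features, and then apply a DeepONet-type separation of variables. The target is total error at most $\epsilon$, split into pieces of size $\epsilon/C$.

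\emph{Step 1 (compactness and truncation).} The space $\Mc_\lambda$ is a closed subset of the Polish space $\Pc_2(I\times\R^d)$, so it is Polish. By tightness of $\rho$, choose a compact $K\subset\Mc_\lambda$ with $\rho(K^c)$ small. Since $\lVert V\rVert_{L^2(\rho)}<\infty$, dominated convergence makes
\begin{align}
\int_{K^c}|V(\cdot,\cdot,\mu)|^2_\mu\rho(\d\mu)
\end{align}
small. On $K$ the measures are uniformly $2$-integrable. Hence for $R$ large, the ball $B_R=\{|x|\le R\}$ satisfies
\begin{align}
\sup_{\mu\in K}\mu\big(I\times B_R^c\big)\ \text{small}, \qquad \int_K\E_\mu\big[|V|^2\mathbf 1_{|X|>R}\big]\rho(\d\mu)\ \text{small}.
\end{align}
The second bound also follows from dominated convergence in $\mu$. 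Final approximants will be bounded, and multiplying the trunk nets by a continuous cutoff $\chi_R$ controls their contribution off $I\times B_{R}$. On the compact set $I\times B_{R+1}\times K$, $V$ is uniformly continuous.

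\emph{Step 2 (cylindrical features).} I need continuous, bounded $\varphi_1,\dots,\varphi_J$ on $I\times\R^d$ such that $\Phi_J$ is injective on $K$, or at least separates points up to $\delta$. Take a countable family $(\varphi_j)$ of bounded continuous functions that is convergence-determining, for instance a dense subset of $\Cc(I\times \bar B_{R+1})$ extended boundedly. Then the metric
\begin{align}
d(\mu,\nu)=\sum_j 2^{-j}\big|\langle\varphi_j,\mu-\nu\rangle\big|\wedge 1
\end{align}
metrizes weak convergence. On $K$, weak convergence coincides with $\Wc_2$ convergence, by uniform integrability. By compactness, for each $\delta>0$ there are $J$ and $\eta>0$ such that $|\Phi_J(\mu)-\Phi_J(\nu)|<\eta$ implies $\Wc_2(\mu,\nu)<\delta$ for $\mu,\nu\in K$. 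Consequently $V(u,x,\mu)$ is uniformly close, on $I\times B_{R+1}\times K$, to $F(u,x,\Phi_J(\mu))$ for a continuous $F$ on $I\times B_{R+1}\times\Phi_J(K)$. To construct $F$, define it on the compact set $\Phi_J(K)$ via a $\delta$-net and a partition of unity. Alternatively, if $\Phi_J$ is injective, note that $\Phi_J$ is a homeomorphism onto its image and extend by Tietze. This is the step I expect to be the main obstacle: the cylindrical features must control $\Wc_2$ continuity uniformly on $K$. I would go via the net/partition-of-unity argument so as to avoid needing exact injectivity.

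\emph{Step 3 (DeepONet).} Extend $F$ continuously by Tietze to $I\times \bar B_{R+1}\times Q$, where $Q\subset\R^J$ is a compact box. By Stone--Weierstrass, or directly by the DeepONet universal approximation theorem of \cite{lu2021learning}, approximate $F$ uniformly by a sum
\begin{align}
\sum_{k=1}^r \tau_k(u,x)\,\beta_k(z),
\end{align}
with $\tau_k$ scalar-valued and $\beta_k$ $\R^q$-valued, both continuous. Then approximate each factor uniformly on its compact domain by neural networks $\Tc_k$ and $\Bc_k$ (universal approximation for MLPs), absorbing the cutoff $\chi_R$ into $\Tc_k$. The resulting approximant is bounded by some $C$, so its contribution on $K^c$ and on $\{|X|>R\}$ is at most $C^2$ times a mass that Step 1 lets us make small. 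Choose $\rho(K^c)$ and $R$ after fixing a bound $C$ depending only on $\sup_{K\times B}|V|+1$. To avoid circularity, first fix $K$ and $R$, which fixes $C$, then shrink the tail masses by enlarging $K$ and $R$; since the bound on $V$ enters only through a cutoff of $V$ at level $M$, I would first replace $V$ by $V\wedge M$ in norm, which has small $L^2(\rho)$ error. Summing the pieces gives the $\epsilon$ bound.
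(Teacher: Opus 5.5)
Your proposal is correct in outline and follows a genuinely different route from the paper. The paper localizes in the state variable. It fixes a compact $\Kc\subset I\times\R^d$ and applies Stone--Weierstrass on $\Kc\times\Mc_\lambda(\Kc)$ (measures supported in $\Kc$) to sums $\sum_k f_k(\Phi_J(\mu))g_k(u,x)$, with features taken from a dense family of $\Cc(\Kc)$. It then truncates $V$ to $V_M$, chooses $\Kc$ so that $\rho(\Mc_\lambda\setminus\Mc_\lambda(\Kc))$ is small, and bounds the network off that set by orthonormalizing the trunk nets in $L^2(\mu)$ and clipping the branch output.

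You localize in measure space instead. You use tightness of $\rho$ on the Polish space $(\Mc_\lambda,\Wc_2)$ and uniform integrability of second moments on the compact $K$. Finitely many bounded features then control $\Wc_2$ uniformly on $K$, and a partition of unity factors $V$ through $\Phi_J$. Global boundedness comes from two facts: bounded features put $\Phi_J(\mu)$ in a fixed box $Q$ for every $\mu\in\Mc_\lambda$, and the cutoff is common to all trunk nets.

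The paper's one-shot Stone--Weierstrass step is shorter, but its reduction only sees compactly supported measures, and there your route is the more robust one. For $\rho=\delta_{\lambda\otimes\Nc(0,1)}$, or the Gaussian-mixture training measures of the experiments, $\rho(\Mc_\lambda(\Kc))=0$ for every compact $\Kc$. Moreover, an orthonormalization in $L^2(\mu)$ depends on $\mu$, so it cannot give a bound uniform in $\mu$. Compactness in $(\Mc_\lambda,\Wc_2)$ together with your boundedness mechanism is exactly what handles a general $\rho$.

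Three things to tidy up.

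\textbf{The feature family.} Your example of a convergence-determining family (a dense subset of $\Cc(I\times\bar B_{R+1})$ extended boundedly) does not work. It cannot separate measures in $K$ that differ only outside $B_{R+1}$, and $V$ may depend on such tails, e.g. through second moments. Use a countable dense subset of the continuous functions on $I\times\R^d$ vanishing at infinity. On the compact $K$, any countable bounded continuous separating family suffices: $\mathrm{id}:(K,\Wc_2)\to(K,d)$ is then a continuous bijection from a compact space, so its inverse is uniformly continuous.

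\textbf{Order of constants.} Fix $M$ first. Then choose $K$ and $R$ with $\rho(K^c)$ and $\sup_{\mu\in K}\mu(I\times B_R^c)$ small compared with $\epsilon/M^2$. Then choose $\delta,J,\eta$. Drop the sentence about enlarging $K$ and $R$ afterwards, which is circular as written. With this order the approximant is bounded by $M+1$ independently of $K$ and $R$, for two reasons. Your $F=\sum_i\psi_i(z)V_M(u,x,\mu_i)$ satisfies $|F|\le M$; keep this in the Tietze extension, e.g. by composing with the radial retraction onto the $M$-ball. The common cutoff then makes the whole sum uniformly close to $\chi_R F$ on $I\times\R^d\times Q$, even if individual factors are large. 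Note also that this $F$ is already in separated form, so the Stone--Weierstrass step in Step~3 is unnecessary.

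\textbf{Trunk nets.} If trunk nets must be plain networks, do not multiply by $\chi_R$. Instead, approximate the compactly supported function $\chi_R\tau_k$ uniformly on all of $I\times\R^d$; for instance, ReLU networks represent compactly supported piecewise-linear interpolants exactly.
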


\begin{Remark}

The architecture of Theorem \ref{thm : universal_approximation_theorem}, i.e.
\begin{align}
    I \times \R^d \times \Mc_{\lambda} \ni (u,x,\mu) \mapsto \sum_{k=1}^{r} \Tc_k(u,x) \Bc_k(\Phi_J(\mu)),
\end{align}
is called DeepONetCyl. In addition to this architecture, we could also consider the more general class of neural network architectures, 
\begin{align}\label{eq:neural_net_architecture_2}
I \times \R^d \times \Mc_{\lambda} \ni (u,x,\mu) \mapsto 
\Psi_{\theta}\Big(u,x, \Phi_J(\mu)\Big),
\end{align}
where $\Psi_{\theta} : I \times \R^d \times \R^J \to \R^q$. Since this architecture produces identical results, we only present those obtained with DeepONetCyl in the sequel.
\end{Remark}


\subsection{Proof of the universal approximation theorem}

Let $\epsilon > 0$. Fix $\Kc$ a compact subset of $I \times \R^d$. For simplicity, we suppose $V$ is a $\R$-valued map but the proof can be easily extended to a vector-valued function. Therefore, let  $V$ be a continuous function over $I \times \R^d \times \Mc_{\lambda}$ into $\R$.

\noindent \textbf{Step n°1 : Separability for a dense class of functions}

Since $\Kc$ is a compact metric space, the space $\Cc(\Kc)$ is separable for the uniform norm and therefore, there exists a dense family $(\varphi_n)_{n \in \N} \in \Cc(\Kc)$ for $\lVert \cdot \rVert_{\infty;\Kc} := \underset{x \in \Kc}{\text{ sup }} |\cdot | $. Then, the family of maps $\big \lbrace \Mc_{\lambda}(\Kc) \ni \mu \mapsto \langle  \varphi_n, \mu \rangle \in \R :  n \in \N \big \rbrace $   where  $\Mc_{\lambda}(\Kc) := \big \lbrace \mu \in \Pc(\Kc ) : \text{pr}_1 \sharp \mu = \lambda \big \rbrace $ is such that for any $\mu \neq \nu \in \Mc_{\lambda}(\Kc)$, there exists $j \in \N$ s.t $\langle \varphi_j, \mu \rangle \neq \langle \varphi_j, \nu \rangle$.

    Indeed, let $\mu,\nu \in \Mc_{\lambda}(\Kc)$ such that $\mu \neq \nu$ and let $f \in \Cc(\Kc)$ such that $\langle f , \mu \rangle \neq \langle f , \nu \rangle$ (such map $f$ exists by characterization of Borel measures on the compact metric space $\Kc$ by bounded continuous maps) and let $\delta := | \int_{\Kc} f(x) \d \mu(x) - \int_{\Kc} f(x) \d \nu(x) \big| > 0 $ and $j_0 \in \N$ such that $ \lVert \varphi_{j_0} - f \lVert_{\infty; \Kc} < \frac{\delta}{4}$. Therefore, we have
    \begin{align}
        \Big | \int_{\Kc} \varphi_{j_0} \d \mu - \int_{\Kc} \varphi_{j_0} \d \nu \Big | &= \Big| \int_{\Kc} f \d \mu - \int_{\Kc} f \d \nu  + \int_{\Kc} (\varphi_{j_0}-f) \d \mu   - \int_{\Kc} (\varphi_{j_0}-f) \d \nu \Big|  \\
        &\geq \big | \int_{\Kc} f \d \mu - \int_{\Kc} f \d \nu \big| - \Big| \int_{\Kc} (\varphi_{j_0}-f) \d \mu   - \int_{\Kc} (\varphi_{j_0}-f) \d \nu \Big|  \\
        & \geq  \delta - 2 \lVert \varphi_{j_0} - f \rVert_{\infty;\Kc} > 0
    \end{align} 
    For $J \in \N^{\star}$, we define
    \begin{align}
       \Mc_{\lambda}(\Kc) \ni \mu \mapsto  \Phi_J(\mu) := \big( \langle \varphi_1,\mu \rangle, \ldots, \langle \varphi_J,\mu \rangle \big) \in \R^J.
    \end{align}
    By definition of the weak convergence of probability measures and since $\Kc$ is compact,  $\Phi_J$ is clearly continuous in the topology of the Wasserstein distance.

\noindent \textbf{Step n°2 : Construction of a dense sub-algebra of $\Cc(\Kc \times \Mc_{\lambda}(\Kc))$}    

We now denote the class of maps $\Ac$ on $\Kc \times \Mc_{\lambda}(\Kc)$ into $\R$ as
    \begin{align}
\Ac :=\Big\{ F : \mathcal K \times \Mc_{\lambda}(\mathcal K) \to \mathbb R
\ \Big|\
\exists\, J,r \in \mathbb N^{\ast},
\ (f_k)_{1\le k\le r} \subset \mathcal C(\mathbb R^J),\
(g_k)_{1\le k\le r} \subset \mathcal C(\mathcal K)
\\
\text{such that  }
F(u,x,\mu)
=
\sum_{k=1}^{r}
f_k\big(\Phi_J(\mu)\big)\, g_k(u,x)
\Big\}.
\end{align}
We claim that given the supremum norm over the compact space $\Kc \times \Mc_{\lambda}(\Kc)$, the closure $\bar{\Ac} = \Cc(\Kc \times \Pc_2^{\lambda}(\Kc))$.

Following Stone-Weierstrass, it is sufficient to show that $\Ac$ is a sub-algebra of $\Cc(\Kc \times \Mc_{\lambda}(\Kc))$ which containts constant maps and where for any $(k_1,\mu_1) \neq (k_2,\mu_2)$ where $k_1,k_2 \in \Kc$ and $\mu_1,\mu_2 \in \Mc_{\lambda}(\Kc)$, there exists $F \in \Ac$ such that $F(k_1,\mu_1) \neq F(k_2,\mu_2)$.

\begin{enumerate}
    \item [$\bullet$] The stability by addition and multiplication are clear.
    \item [$\bullet$] It is clear that $\Ac$ contains constant maps by taking $J,r=1$, $f_1=1 \in \Cc(\R)$ and $g_1=1 \in \Cc(\Kc)$.
    \item [$\bullet$] Let $k_1 \neq k_2$. In this case, take $J=1,r=1$, $f_1=1$ and $g_1(k) = d(k,k_1)$ for which it is clear that $F(k_1,\mu_1)= g_1(k_1)= 0 \neq g_1(k_2) = F(k_2,\mu_2)$ where $d\big( (u_1,x_1),(u_2,x_2)\big) = |u_1-u_2| + | x_1 - x_2 |$ is the product distance. Suppose now $k_1=k_2$ and $\mu_1 \neq \mu_2$. By  the previous result, there exists $j_0 \in \N^{\star}$ such that $\langle \varphi_{j_0},\mu_1 \rangle \neq \langle \varphi_{j_0},\mu_2 \rangle$. Then, we take $J=j_0$, $r=1$, $g=1$ and $f_1 :(x_1,\ldots,x_{j_0}) \mapsto x_{j_0} \in \Cc(\R^{j_0})$. In this case, we have  $F(k_1,\mu_1) = \langle \varphi_{j_0}, \mu_1 \rangle \neq \langle \varphi_{j_0},\mu_2 \rangle = F(k_2,\mu_2)$.
\end{enumerate}
Therefore by Stone-Weierstrass, $\Ac$ is a dense sub-algebra of $\Cc \big(\Kc \times \Mc_{\lambda}(\Kc) \big)$. Therefore, for any $\epsilon > 0$, there exists $J ,r \in \N^{\star}$, $(f_{k})_{1 \leq k \leq r} \in \Cc(\R^J)$ and $(g_{k})_{1 \leq k \leq r} \in \Cc(\Kc)$ such that
\begin{align}
    \underset{(u,x,\mu) \in \Kc \times \Mc_{\lambda}(\Kc)}{\text{ sup }} |V(u,x,\mu) - \sum_{k=1}^r f_{k}( \Phi_{J}(\mu))) g_{k}(u,x) | \leq \epsilon.
\end{align}
Now, noticing that $\tilde{\Kc}= \Phi_J(\Mc_{\lambda}(\Kc))$ is compact as the image of a compact by a continuous map, and by the classical universal approximation theorem for finite dimensional functions, for any $k \in \llbracket 1, r \rrbracket$, there exists a feedforward neural network $\Bc_k : \R^J \to \R$ and $\Tc_k : I \times \R^d \to \R$ such that for given $\delta > 0$
\begin{align}
\begin{cases}
        \underset{1 \leq k \leq r}{\text{ max}} \underset{z \in \tilde{\Kc}}{\text{ sup }} | f_k(z) - \Bc_k(z)| \leq \delta, \\
                \underset{1 \leq k \leq r}{\text{ max}} \underset{z \in \Kc}{\text{ sup }} | g_k(z) - \Tc_k(z)| \leq \delta,
\end{cases}
\end{align}
Denoting now the cylindrical $\text{DeepONetCyl}$ map on $I \times \R^d \times \Mc_{\lambda}(\Kc)$ as  $\text{DeepONetCyl}(\mu)(u,x) :=\sum_{k=1}^{r} \Bc_k(\Phi_J(\mu)) \Tc_k(u,x)$.
We now show that 
\begin{align}
    \underset{(u,x,\mu) \in \Kc \times \Mc_{\lambda}(\Kc)}{\text{ sup }} |V(u,x,\mu) - \text{DeepONetCyl}(u,x,\mu) | \leq \epsilon.
\end{align}
Denote the positive and  finite constants $M_f$ and $M_g$ as 
\begin{align}
\begin{cases}
    M_f := \underset{1 \leq k \leq r}{\text{ max}}  \underset{z \in \tilde{\Kc}}{\text{ sup }} |f_k(z)|, \\
    M_g := \underset{1 \leq k \leq r}{\text{ max}}  \underset{z \in \Kc}{\text{ sup }} |g_k(z)|,
\end{cases}
\end{align}
Then $\underset{z \in \tilde{\Kc}}{\text{ sup }} | \Bc_k(z)| \leq M_f + \delta $ and   $\underset{z \in \Kc}{\text{ sup }} | \Tc_k(z)| \leq M_g + \delta $. 

\noindent Let $(k,\mu) \in \Kc \times \Mc_{\lambda}(\Kc)$. Then, we have
\begin{align}
    \lVert V - \text{DeepONet} \rVert_{\infty; \Kc \times \Mc_{\lambda}(\Kc)} &= \lVert \sum_{k=1}^{r} f_k(\Phi_J(\cdot))g_k -  \sum_{k=1}^{r} \Bc_k(\Phi_J(\cdot))\Tc_k \lVert_{\infty; \Kc \times \Mc_{\lambda}(\Kc)} \\
    &\leq \sum_{k=1}^{r} \Big( \lVert f_k \rVert_{\infty, \tilde{\Kc}} \lVert g_k - \Tc_k \rVert_{\infty;\Kc}  + \lVert \Tc_k \rVert_{\infty,\Kc} \lVert  f_k - \Bc_k \rVert_{\infty;\tilde{\Kc}} \Big) \\
    &\leq  r (  M_f  \delta + (M_g+ \delta) \delta)
\end{align}
Choosing $\delta$ small enough is enough to get the required result.

\noindent     \textbf{Step n°3 : Approximation theorem on $L^2(\rho)$} 

Let $\epsilon > 0$ and $\nu$ a probability measure on $\Mc_{\lambda}$. Given $M \geq 0$ we truncate the map $V$ by defining $V_M$ on $I \times \R^d \times \Mc_{\lambda}$ as 
\begin{align}
V_M(u,x,\mu) :=
\begin{cases}
V(u,x,\mu) & \text{if $|V(u,x,\mu)| \leq M$,} \\
M \frac{V(u,x,\mu)}{|V(u,x,\mu)|} & \text{if  $|V(u,x,\mu)| > M$,}
\end{cases}
\end{align}
so that $|V_M(u,x,\mu)| \leq M$ for every $(u,x,\mu) \in I \times \R^d \times \Mc_{\lambda}$. By definition of $V_M$, we have the point-wise convergence: $V_M \underset{M \to \infty}{\to} V$. Therefore, for a fixed $\mu \in \Mc_{\lambda}$ by looking over the space $L^2(\mu)$ and noticing that $|V_M(\cdot,\cdot,\mu) - V(\cdot,\cdot,\mu)|  \leq |V(\cdot,\cdot,\mu)| \in L^2( \mu)$. Therefore, it implies by the dominated convergence theorem that the map $\mu \mapsto |V_M(\cdot,\cdot,\mu) - V(\cdot,\cdot, \mu)|_{\mu}$ converges point-wisely to $0$ and since $|V_M(\cdot,\cdot,\mu) - V(\cdot,\cdot, \mu)|_{\mu} \leq \ | V(\cdot,\cdot,\mu)|_{\mu} \in L^2(\rho)$ by assumption on $V$,  we conclude by the convergence dominated theorem that $\lVert V-V_M \rVert^2_{L^2(\rho)} \to 0$. We can therefore choose $M >\frac{\sqrt{\epsilon}}{4}$ such that 
\begin{align}\label{eq : truncated_approximation}
    \lVert V- V_M \rVert^2_{L^2(\rho)} \leq \frac{\epsilon}{8}.
\end{align}
Now, we consider some compact set $\Kc \subset I \times \R^d$ such that $\rho \big(\Mc_{\lambda} \backslash \Mc_{\lambda}(\Kc) \big) \leq \frac{\epsilon}{80 M^2}$ following Lusin's theorem on complete and separable metric spaces (see Theorem C.1 in \cite{lanthaler2022error} ) and we note that $V_M$ is continuous on $I \times \R^d \times \Mc_{\lambda}$. Applying the universal approximation theorem from \textbf{Step n°2}, we get $r,J \in \N$, $(\varphi_k)_{1 \leq k \leq r} \in \Cc(\R^J)$ and $(\psi_k)_{1 \leq k \leq r} \in \Cc(I \times \R^d)$ such that 
\begin{align}
    \underset{(u,x) \in \Kc, \mu \in \Mc_{\lambda}(\Kc)}{\text{ sup }} |V_M(u,x,\mu) - \sum_{k=1}^{r} \varphi_k(\Phi_J(\mu)) \psi_k(u,x)| \leq \frac{\sqrt{\epsilon}}{4}.
\end{align}
Therefore, we have 
\begin{align}\label{eq : ineq_proof_approximation_theorem}
    | \sum_{k=1}^{r} \varphi_k(\Phi_J(\mu)) \psi_k(u,x)| &\leq |V_M(u,x,\mu)| + |V_M(u,x,\mu) -  \sum_{k=1}^{r} \varphi_k(\Phi_J(\mu)) \psi_k(u,x) |, \\
    &\leq  M + \frac{\sqrt{\epsilon}}{4} \leq 2M, \quad (u,x) \in \Kc, \mu \in \Mc_{\lambda}(\Kc).
\end{align}
We note that after suitably modifying the linear output layers of the branch and trunk nets neural nets of the previous \text{DeepONetCyl} (see Theorem 3.1 in \cite{lanthaler2022error}), we can rewrite 
\begin{align}\label{eq : transformation_deepOnet}
     \sum_{k=1}^{r} \varphi_k(\Phi_J(\mu)) \psi_k(u,x)=\sum_{k=1}^{l} \Bc_k(\Phi_J(\mu)) \Tc_k(u,x),
\end{align}
where the trunk neural nets $(\Tc_k)_{1 \leq k \leq l}$ are orthonormal in $L^2(\mu)$ for some $l \leq r$. In particular, we have
\begin{align}\label{eq : L2norm_comput}
    |\sum_{k=1}^{l} \Bc_k(\Phi_J(\mu)) \Tc_k(\cdot,\cdot)|^2_{\mu} = | \Bc(\Phi_J(\mu))|^2, \quad \mu \in \Mc_{\lambda},
\end{align}
and
\begin{align}
    | \Bc(\Phi_J(\mu))| \leq 2M, \quad \forall \mu \in \Mc_{\lambda},
\end{align}
where we denoted $\Bc(\mu) := (\Bc_1(\mu), \ldots,\Bc_l(\mu))$.
Now, by the clipping lemma (see Lemma C.2 in \cite{lanthaler2022error}), there exists a neural network $\gamma : \R^l \to \R^l$, satisfying
\begin{align}\label{eq : clipping_lemma}
\begin{cases}
    |\gamma(y) - y | &\leq \frac{\sqrt{\epsilon}}{4}, \quad \text{ if $|y| \leq M + \frac{\sqrt{\epsilon}}{4}$},\\
    |\gamma(y)| &\leq  2M, \quad \forall y \in \R^l.
\end{cases}
\end{align}
We now define the following \text{DeepONetCyl} over $I \times \R^d \times \Mc_{\lambda}$ as 
\begin{align}\label{eq : def_final_DeepOnetCyl}
    \text{DeepONetCyl}(\mu)(u,x) := \sum_{k=1}^{l} \gamma(\Bc(\Phi_j(\mu)) \Tc_k(u,x),
\end{align}
and we have 
\begin{align}
    | \text{DeepONetCyl}(\mu)(\cdot,\cdot)|_{\mu} = |\gamma(\Bc(\Phi_J(\mu))| \leq  2M,
\end{align}
by \eqref{eq : clipping_lemma} and since $(\Tc_{k})_{1 \leq k \leq l}$ are orthonormal in $L^2(\mu)$. Moreover, we have
\begin{align}
    | V_M(\cdot,\cdot,\mu)- \text{DeepONetCyl}(\mu)(\cdot,\cdot) |_{\mu} &\leq | V_M(\cdot,\cdot,\mu)-   \sum_{k=1}^{l} \Bc_k(\Phi_J(\mu)) \Tc_k(\cdot,\cdot) |_{\mu}  \\
    &\quad+ |    \sum_{k=1}^{l} \Bc_k(\Phi_J(\mu)) \Tc_k(\cdot,\cdot) - \text{DeepONetCyl}(\mu)(\cdot,\cdot) |_{\mu} \\
    &\leq \frac{\sqrt{\epsilon}}{4} + \frac{\sqrt{\epsilon}}{4} = \frac{\sqrt{\epsilon}}{2}.
\end{align}
from \eqref{eq : ineq_proof_approximation_theorem}, \eqref{eq : L2norm_comput} and \eqref{eq : clipping_lemma}.
We therefore have 
\begin{align}
    C &:= \int_{\Mc_{\lambda}} |V_M(\cdot,\cdot,\mu) - \text{DeepONetCyl}(\mu)(\cdot,\cdot)|^2_{\mu} \rho(\d \mu) \\
    &\leq \int_{\Mc_{\lambda}(\Kc)} |V_M(\cdot,\cdot,\mu) - \text{DeepONetCyl}(\mu)(\cdot,\cdot)|^2_{\mu} \rho(\d \mu)   \\
    & \qquad + \;   2 \int_{\Mc_{\lambda}\backslash \Mc_{\lambda}(\Kc)} \big( | V_M(\cdot,\cdot,\mu)|^2_{\mu}  | +\text{DeepONetCyl}(\mu)(\cdot,\cdot)|^2_{\mu} \big) \rho(\d \mu) \\
    &\leq  \frac{\epsilon}{4} +   2 \big( M^2 + 4M^2 \big) \frac{\epsilon}{80 M^2} = \frac{3 \epsilon}{8},
\end{align}
where we used $|V_M| \leq M$ and $|\text{DeepOnetCyl}(\mu)(\cdot,\cdot)|_{\mu} \leq 2M$. Now, recalling \eqref{eq : truncated_approximation}, we have
\begin{align}
     D &:=\int_{\Mc_{\lambda}} | V(\cdot,\cdot,\mu) - \text{DeepONetCyl}(\cdot,\cdot,\mu)|^2 \rho(\d \mu)
     \\
     & \leq2   \int_{\Mc_{\lambda}} | V(\cdot,\cdot,\mu) - V_M(\cdot,\cdot,\mu)|^2_{\mu} \rho(\d \mu) \\
     &\quad+ 2 \int_{\Mc_{\lambda}} |V_M(\cdot,\cdot,\mu) - \text{DeepONetCyl}(\cdot,\cdot,\mu)|^2_{\mu} \rho(\d \mu) \leq \frac{2 \epsilon}{8} + 2 \frac{3 \epsilon}{8} = \epsilon.
\end{align}
The proof is therefore completed.

\subsection{Data generation}\label{subsec : dataa_generation}

The training of neural networks for approximating mean field functions relies on sampling $\mu \in \Mc_{\lambda}$ and a random variable $(U,X)$ whose law  is $\mu$. We first  give a Lemma which provides a simple way to sample on $\Mc_{\lambda}$.
\begin{Lemma}\textnormal{(Sampling in $\Mc_{\lambda}).$}\label{lemma : samplingPlambda}

\noindent Let $\nu$ be a non-atomic reference probability  in $\Pc_2(\R^d)$ and $(\mu^u)_u \in L^2(I ;\Pc_2(\R^d))$ Then, there exists a measurable map $T \in L^2(\lambda \otimes \nu; \R^d)$ such that $T(u,\cdot)\sharp \nu = \mu^u$ $\lambda(\d u)-$a.e.
\end{Lemma}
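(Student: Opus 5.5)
The plan is to build $T$ from a measurable parametrization of transport maps, so that joint measurability in $(u,y)$ comes for free. We proceed in three steps.

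\textbf{Step 1: a canonical map from $\nu$ to a uniform variable.} Since $\nu$ is non-atomic on the standard Borel space $\R^d$, there is a Borel map $S:\R^d\to[0,1]$ with $S\sharp\nu = \mathrm{Leb}_{[0,1]}$. To obtain it, we take a Borel isomorphism $\iota:\R^d\to\R$ (Kuratowski). The measure $\iota\sharp\nu$ is a non-atomic law on $\R$, and its distribution function $F$ is continuous. We then set $S := F\circ\iota$, and $F(Z)$ is uniform whenever $Z$ has a continuous distribution function.

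\textbf{Step 2: a jointly measurable family of maps from $[0,1]$ to $\mu^u$.} We construct a map $G:I\times[0,1]\to\R^d$, jointly measurable, with $G(u,\cdot)\sharp\mathrm{Leb}=\mu^u$ for every $u$. For $d=1$ the conditional quantile function $G(u,s)=\inf\{x: \mu^u((-\infty,x])\ge s\}$ does the job. Joint measurability holds because $u\mapsto \mu^u((-\infty,x])$ is measurable: the map $u\mapsto\mu^u$ is measurable into $\Pc(\R^d)$ with its Borel $\sigma$-field for the weak topology. The quantile is also monotone and right-continuous in $s$.

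For general $d$ there are two options. The first is to use the Knothe–Rosenblatt (iterated conditional quantiles) construction with $d$ independent uniforms, obtained from $s$ by splitting binary digits via a measurable bijection $[0,1]\to[0,1]^d$ preserving Lebesgue measure. The second, cleaner option is to invoke the Borel isomorphism $\R^d\simeq\R$ once more. Concretely, we set $\tilde\mu^u:=\iota\sharp\mu^u$, take its quantile function $\tilde G(u,s)$, and put $G:=\iota^{-1}\circ\tilde G$. Measurability of $u\mapsto\tilde\mu^u$ follows from that of $u\mapsto\mu^u$, since pushforward by a Borel map is measurable on $\Pc$. This is the step I expect to require the most care: we must check that the kernel $u\mapsto \mu^u$ (only defined $\lambda$-a.e.) is a genuine measurable kernel. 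We then fix a version on a $\lambda$-full set and put $\mu^u=\delta_0$ elsewhere.

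\textbf{Step 3: conclusion.} We define $T(u,y):=G(u,S(y))$. This map is Borel on $I\times\R^d$ as a composition of measurable maps. For $\lambda$-a.e.\ $u$,
\begin{align}
T(u,\cdot)\sharp\nu = G(u,\cdot)\sharp(S\sharp\nu)=G(u,\cdot)\sharp\mathrm{Leb}=\mu^u .
\end{align}
Finally, the square-integrability follows from Fubini:
\begin{align}
\int_I\int_{\R^d}|T(u,y)|^2\nu(\d y)\lambda(\d u)=\int_I\int_{\R^d}|x|^2\mu^u(\d x)\lambda(\d u)=\int_I \Wc_2(\mu^u,\delta_0)^2\lambda(\d u).
\end{align}
This quantity is finite under the (quadratic) integrability assumption defining $L^2_\lambda(I;\Pc_2(\R^d))$, read with the square as the notation suggests. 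Hence $T\in L^2(\lambda\otimes\nu;\R^d)$.
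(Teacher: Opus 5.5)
Your proof is correct and follows the paper's route. The paper also sets $T(u,y)=G(u,S(y))$, where $G$ is a jointly measurable randomization of the kernel $u\mapsto\mu^u$ and $S$ pushes $\nu$ to $\Uc([0,1])$, and it concludes with the same Fubini computation; that computation needs the squared integrability you point out, which the paper uses implicitly and which matches its appendix definition of $L^2(I;E)$. The only difference is that you build $G$ and $S$ by hand from a Borel isomorphism $\R^d\simeq\R$ and quantile functions, whereas the paper cites Kallenberg's randomization lemma and the isomorphism theorem for non-atomic standard spaces, whose standard proofs are exactly your construction. One minor slip: the generalized inverse is left-continuous (not right-continuous) in $s$, but joint measurability still follows from $\{G\le x\}=\{(u,s): F_u(x)\ge s\}$.
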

\begin{proof}
\noindent      The proof is a simple application of the randomization lemma (see \cite{kallenberg2002foundations}). Indeed, since the map $I \ni u \mapsto \mu^u \in \Pc_2(\R^d)$ is measurable, there exists a measurable map $F : I \times [0,1] \to \R^d$ such that $\big(U,F(U,R)\big) \sim \mu(\d u , \d x) = \mu^u(\d x) \lambda(\d u ) $ where $(U,R) \sim \Uc([0,1]) \otimes \Uc([0,1])$ where $R$ is used for randomization and $U$ for the labeling of the agents. Disintegrating $\mu$ over $I$, we get that $F(u,\cdot) \sharp \Uc([0,1]) = \mu^u \text{ $\lambda(\d u)$-a.e}$.

     Let $\Phi : \R^d \to [0,1]$ be a measurable  isomorphism satisfying $\Phi \sharp \nu = \Uc([0,1])$ (which exists since $\nu$ is non-atomic on a Borel space). Defining the measurable map $T$ on $I \times \R^d$ into $\R^d$ as $T(u,x) := F(u, \Phi(x) \big)$, we end up with the result. We now verify that $T \in L^2(\lambda \otimes \nu ; \R^d)$, i.e.
     \begin{align}\label{eq : T_in_L2}
         \int_{I \times \R^d } | T(u,x)|^2 \nu(\d x) \lambda(\d u ) < + \infty.
     \end{align}
     Now, \eqref{eq : T_in_L2} follows noticing that for $\lambda(\d u )-\text{a.e}$, $\int_{\R^d} |T(u,x)|^2 \nu(\d x) =\int_{\R^d} |y|^2 \mu^u(\d y)$ since $T(u,\cdot) \sharp \nu = \mu^u $ and because $(\mu^u)_u \in L^2(I;\Pc_2(\R^d))$.
\end{proof}
From Lemma \ref{lemma : samplingPlambda}, we design two ways to sample on $\Mc_{\lambda}$.

\begin{enumerate}
    \item [] \textbf{S1:} Given a non-atomic measure $\nu \in \Pc_2(\R^d)$, we sample a dense class of maps $(T_{\theta})_{\theta \in \Theta} \in L^2(\lambda \otimes \nu ; \R^d)$ (for instance $(T_{\theta})_{\theta \in \Theta}$ can be sampled as a parametrized class of neural networks for a 
    set $\Theta \subset \R^m$.  
    In this case, $\P_{(U,T_{\theta}(U,Y))} \in \Mc_{\lambda}$, where $U$ and $Y$ are random variables on $(\Omega,\Fc,\P)$ s.t $(U,Y) \sim \lambda \otimes \nu$.
    \item [] \textbf{S2:} Alternatively, we can sample a class of non-atomic measures $\nu  \in \Sc \subset \Pc_2(\R^d)$, given a fixed map $T \in L^2(\mu \otimes \nu)$. In this case,  we start by sampling  $(U,Y) \sim \lambda \otimes \nu$ where $\nu \in S$ and we set  $(U,T(U,Y))$ whose law belongs to $\Mc_{\lambda}$ by construction. However,  this method only helps us to sample in the set $\big \lbrace \lambda(\d u)  T(u,\cdot){\sharp} \nu(\d y) : \nu \in \Sc \big \rbrace   $.
\end{enumerate}

\begin{Remark}\textnormal{(Special case of $d=1$).}

\noindent 

\normalfont

\noindent In the special case of $d=1$, the map $T$ can be explicitly constructed. Indeed, we take $\nu = \Uc([0,1])$. Then, given $(\mu^u)_u \in \Pc_2(\R) $ we denote  its cumulative distribution function  as $F_{\mu^u}(x) := \mu^u( ( - \infty;x])$ and we define its quantile function $Q_{\mu^u}$  as 
\begin{align}
   Q_{\mu^u}(t):= \text{inf } \lbrace x \in \R : F_{\mu^u}(x) \geq t \big \rbrace, \quad t \in (0,1).
\end{align}
Then, defining $T(u,x) :=Q_{\mu^u}(x)$ gives the required result.

\end{Remark}

\subsection{Training the mean-field operator}

The algorithm consists in the following parts.

\begin{enumerate}
    \item In order to use the $\text{DeepONetCyl}$ structure and the universal approximation of Theorem \ref{thm : universal_approximation_theorem}, we need to consider a finite but dense functions in order to compute the quantities $\Phi_J(\mu)$. In practice, we samples polynomial maps and we choose $r$ and $J$ as hyperparameters. In fact, for $1 \leq i \leq J$, we choose the moment maps $\varphi_i(u,x) := |x|^i + u^i$ and we notice that $\langle \varphi_i, \mu \rangle =   \E_{(U,X) \sim \mu} [ |X|^i ]$ up to a constant for $\mu \in \Mc_{\lambda}$.
    \item We draw samples $(\mu^{(m)} )_{1 \leq m \leq M}$ of probability measures on $\Mc_{\lambda}$ (recalling  the previous section).
    We introduce two neural networks $\Tc^{\theta_1}$ parametrized with $\theta_1$ and  $ \Bc^{\theta_2}$ parametrized by $\theta_2$. We then minimize over the parameters $ \theta := (\theta_1,\theta_2)$ the following loss function
    \begin{align}
        L(\theta) := \frac{1}{M} \sum_{m=1}^{M} \E_{(U,X) \sim \mu^{(m)}} \bigg[ \Big| V(U,X, \mu^{(m)}) - \sum_{k=1}^{r} \Tc_{k}^{\theta_1}\big(U,X) \Bc_{k}^{\theta_2}(\Phi_J(\mu^{(m)})  \big )    \Big|^2 \bigg].
    \end{align}
\end{enumerate}
\vspace{0.2 cm}
It can be summarized in the following algorithm :\\
\begin{algorithm2e}[H]
\DontPrintSemicolon 
\SetAlgoLined 
\footnotesize
{\bf Input}: 
Number of distributions $M$, batch size $N$, number of  epoch $e$, learning rate $\rho$, $\theta=(\theta_1,\theta_2)$ initial parameters of neural networks  $\Tc^{\theta_1} =(\Tc^{\theta_1}_1 , \ldots, \Tc^{\theta_1}_r)$ and  $ \Bc^{\theta_2} =(\Bc^{\theta_2}_1,\ldots, \Bc^{\theta_2}_r)$, $J$ number of moments, $r$ number of sensors.

\For{each epoch $e$}{
    Sample $(U_{m,n}, X_{m,n}) \sim \mu^{(m)}$ for $n \in \llbracket 1,N \rrbracket$ where  $\mu^{(m)} \in \Mc_{\lambda} $ for $m \in \llbracket 1, M \rrbracket$\;
    Calculate $\Phi_{m} = (\frac{1}{N} \sum_{n=1}^N  |X_{m,n}|^j)_{j \in \llbracket 1, J \rrbracket} $ for $m \in \llbracket 1,M \rrbracket$ \; 
    Calculate target function $$J(\theta)= \frac{1}{MN} \sum_{n=1}^M \sum_{j=1}^N \Big(V(U_{m,n},X_{m,n}, \mu^{(m)}) -  \sum_{k=1}^r \Tc^{\theta_1}_{k}(U_{m,n} , X_{m,n}) \Bc^{\theta_2}_k (\Phi_m)\Big)^2$$ \;
    $\theta = \theta - \rho \nabla J(\theta)$
 }
{\bf Return:} $\theta$
\caption{Non exchangeable mean field network approximation of a map }\label{algo : echangeable_approx}
\end{algorithm2e}

\section{Numerical experiments} \label{sec:num}

We test our algorithms by computing the mean-squared error (MSE) for different cases of non exchangeable mean-field functions $V$ on $I \times \R \times \Mc_{\lambda}$. As in the literature, we use a graphon map $G$ (i.e. a measurable map on $I \times I$ into $\R_+$) to represent the interaction between two agents. We propose the two following maps to illustrate our Algorithm  \ref{algo : echangeable_approx}.
\begin{enumerate}
    \item [] \textbf{V1}:   A first order non exchangeable mean-field interaction 
    \begin{align}
        V(u,x,\mu) :=  x - \E_{(U,X) \sim \mu} \big[ G(u,U) X \big].
    \end{align}
    \item [] \textbf{V2}: A second order non exchangeable mean-field interaction
    \begin{align}
        V(u,x,\mu) := \E_{(U,X) \sim \mu} \big[ (x- G(u,U) X)^2 \big].
    \end{align}
\end{enumerate}
We first generate samples in $\Mc_{\lambda}$ by using $\textbf{S2}$ in Section \ref{subsec : dataa_generation}, i.e., we fix a transport map $T$ and we sample a class of non-atomic measures.
We first propose two simple cases for choice of map $T$ given the unknown link between $X$ and $U \sim \Uc([0,1])$.

\begin{enumerate}
    \item [] \textbf{T1}: $T(u,y) := uy$, 
    \hspace{3cm} 
    \textbf{T2}: $T(u,y) :=  uy + (uy)^2.$
\end{enumerate}
 We then draw $X=T(U,Y)$, with $Y$  sampled according to a distribution picked randomly as a mixture of five gaussian laws.
In fact, for each $m$, we sample for $k \in \llbracket 1, 5 \rrbracket$,
$W_{m,k} \sim \Uc([0,1])$, $(\mu_{m,k}, \sigma_{m,k})  \sim \Uc([0,1])^2$, and then we compute for any $n \in \llbracket 1, N \rrbracket$
\begin{align}
    Y_{m,n} =   \frac{1}{\sum_{k=1}^5 W_{m,k}}\sum_{k=1}^5 W_{m,k} Z_{m,k,n}, \quad   \text{ with} \quad Z_{m,k,n} \sim \mathcal{N}(\mu_{m,k},\sigma_{m,k}^2). 
\end{align}
 \\
We consider two cases of interacting functions:
\begin{enumerate}
    \item   A smooth graphon 
    \begin{align}
        G_1(u,v):= e^{-uv}.
    \end{align}
    \item  A block-wise graphon
    \begin{align}\label{eq : G2_map}
        G_2(u,v) := L \sum_{i=1}^L \frac{1}{1 + e^{((i-1)L(u-v))}} 1_{(\frac{i-1}{L},\frac{i}{L}]}(u) 1_{(\frac{i-1}{L},\frac{i}{L}]}(v),
    \end{align} 
    as in \cite{alvarez2025contracting}, modeling $L=5$ teams, each team with a given interaction between agents (in the first team  agents have an homogeneous interaction) and the different teams have no interaction. The function is represented on Figure \ref{fig:G2}.
    \begin{figure}[h!]
        \centering
        \includegraphics[width=0.5\linewidth]{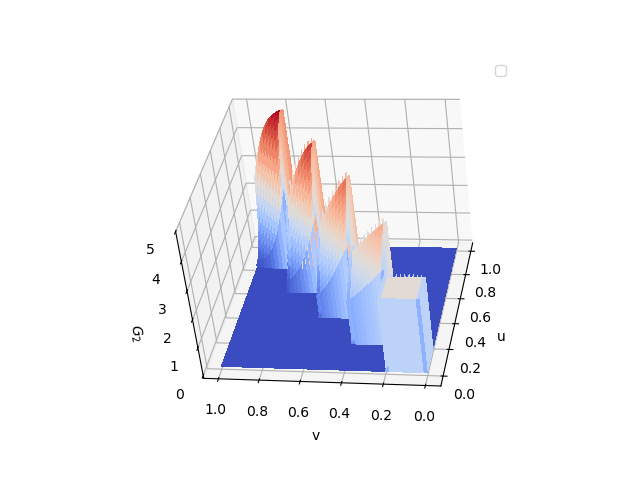}
        \caption{$G_2$ graphon.}
        \label{fig:G2}
    \end{figure}
\end{enumerate}
All the tests are achieved using the ADAM optimization method with a learning rate of $0.001$. The default value for $r$ is $10$ and we consider two architectures for the networks $\Tc^{\theta_1}$ and  $ \Bc^{\theta_2}$ :
\begin{itemize}
    \item The first one is the classical feedforward network using a $\tanh$ activation function using 3 hidden layers of 10 neurons.
    \item The second one is a Kolmogorov  Arnold Network  \cite{liu2024kan} using two hidden layers of 10 neurons and a grid size equal to 5. In this version, one dimensional functions are approximated using splines  (Spline KAN). Notice that  no universal approximation theorem is available for this network.
    \item The third one is  a Kolmogorov Arnold Network  using P1 type finite element functions with adapting support to approximate one dimensional functions  (P1KAN) \cite{warin2024p1}. This network is specially effective to approximate irregular functions. Its convergence is supported by a universal approximation theorem. With this network, we use two hidden layers of 10 neurons and a grid size equal to 10. 
\end{itemize}
In the algorithm we use $N=50000$ samples to approximate the distribution and take one distribution at each iteration of the gradient algorithm.

 We now illustrate the convergence of the non exchangeable network using a GPU Nvidia H100 94Go HBM2.  All convergence plots are  given with 100000 iterations calculating the accuracy every hundred of iterations and smoothing the result obtained with a rolling window of 10 values.

On Figures \ref{fig:F1G1}, \ref{fig:F2G1} we give the convergence depending on the number of moments. Not surprisingly with mean-field function in \textbf{V1}, the convergence rate is independent of the number of moment,  while with mean-field function in \textbf{V2} at least two moments are necessary.

 \begin{figure}[H]
     \centering
     \begin{minipage}[t]{0.47\linewidth}
    \includegraphics[width=\linewidth]{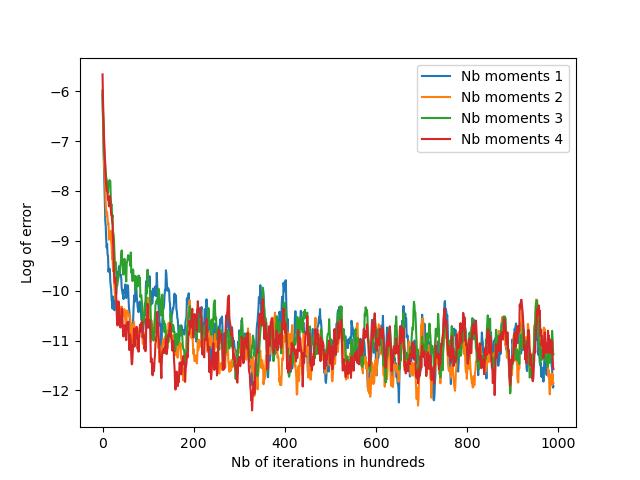}
    \caption*{ \textbf{T1}}
    \label{fig:enter-label}
    \end{minipage}
     \begin{minipage}[t]{0.47\linewidth}
    \includegraphics[width=\linewidth]{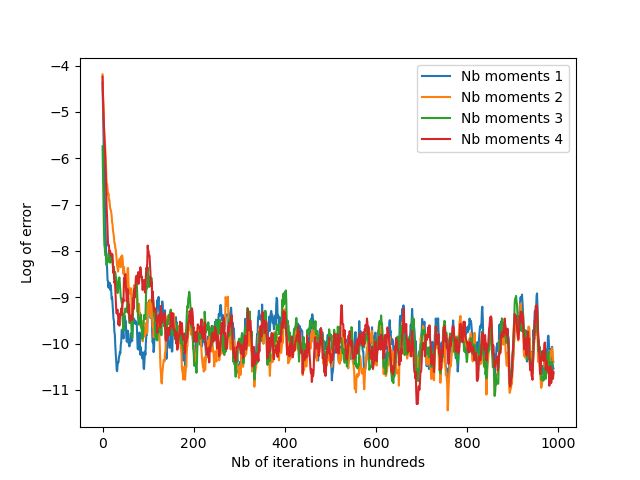}
    \caption*{ \textbf{T2}}
    \label{fig:enter-label}
    \end{minipage}
     \caption{Mean-field function in \textbf{V1}, graphon  $G_1$  using  the feedforward network :  \text{log} of the error obtained during training depending on $J$ the number of moments.}
     \label{fig:F1G1}
 \end{figure}

 \begin{figure}[H]
     \centering
     \begin{minipage}[t]{0.47\linewidth}
    \includegraphics[width=\linewidth]{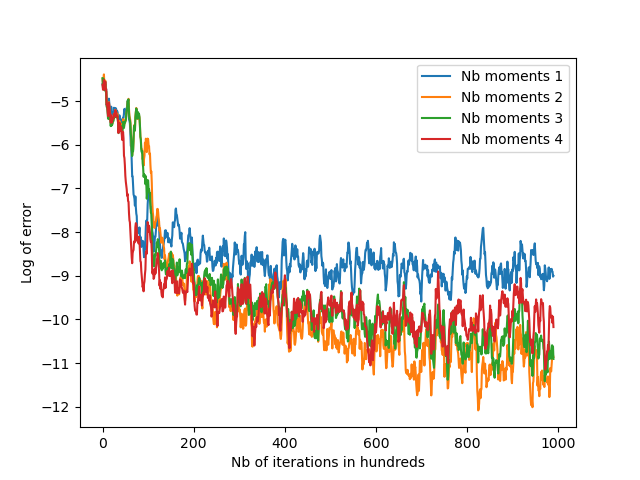}
    \caption*{ \textbf{T1}}
    \label{fig:enter-label}
    \end{minipage}
     \begin{minipage}[t]{0.47\linewidth}
    \includegraphics[width=\linewidth]{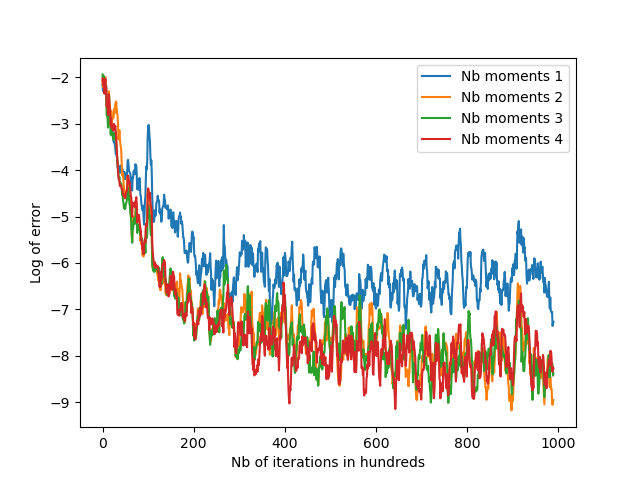}
    \caption*{ \textbf{T2}}
    \label{fig:enter-label}
    \end{minipage}
     \caption{Mean-field function in \textbf{V2}, graphon $G_1$ using the feedforward network :  $\log$ of the error obtained during training on $J$ the number of moments.}
     \label{fig:F2G1}
 \end{figure}
 
 Figure \ref{fig:G1SplineVFeed} illustrates the fact that choice of the network architecture is crucial to get a very good convergence. For these very regular cases, Spline KAN outperforms the two other networks that behave similarly.
 KAN networks are known to be more expensive to use than feedforwards but here computing times are very similar. For mean-field function in \textbf{V2}, with transport map from \textbf{T2},  with one moment, 100 iterations takes 3.56 seconds with a feedforward while it takes 3.74 seconds with the Spline KAN. Using 4 moments, the feedforward network takes 3.61 seconds while the Spline KAN takes 3.75 seconds. Results are similar with the P1KAN network. The crucial point here is to effectively calculate the interaction matrix.
 
 \begin{figure}[H]
     \centering
     \begin{minipage}[t]{0.47\linewidth}
    \includegraphics[width=\linewidth]{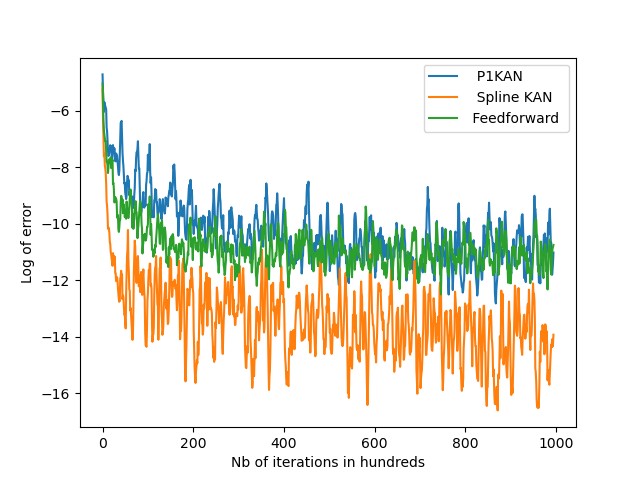}
    \caption*{  \textbf{V1-T1} }
    \label{fig:enter-label}
    \end{minipage}
     \begin{minipage}[t]{0.47\linewidth}
    \includegraphics[width=\linewidth]{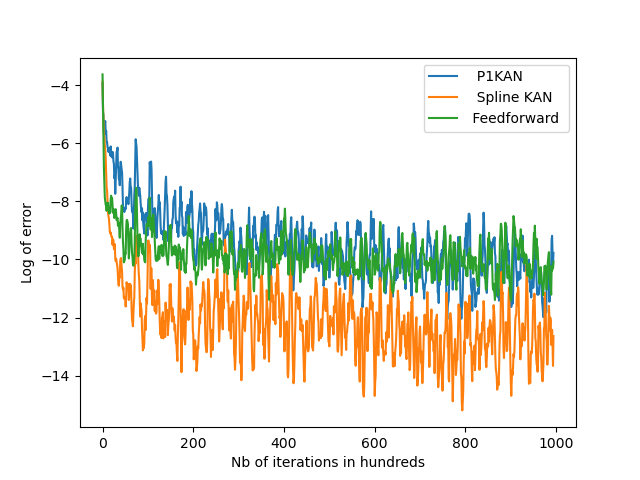}
    \caption*{ \textbf{V1-T2}}
    \label{fig:enter-label}
    \end{minipage}
     \begin{minipage}[t]{0.47\linewidth}
    \includegraphics[width=\linewidth]{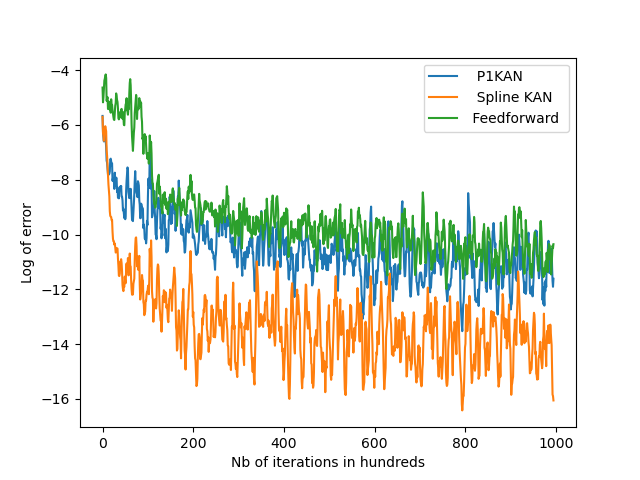}
    \caption*{  \textbf{V2-T1}}
    \label{fig:enter-label}
    \end{minipage}
     \begin{minipage}[t]{0.47\linewidth}
    \includegraphics[width=\linewidth]{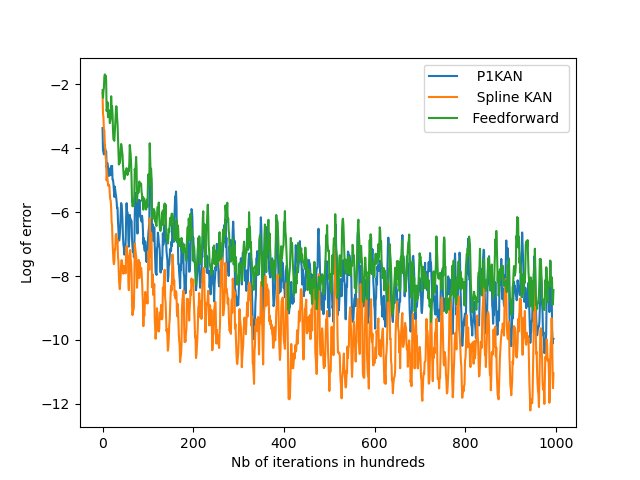}
    \caption*{ \textbf{V2-T2}}
    \label{fig:enter-label}
    \end{minipage}
     \caption{ Comparison of feedforward and KAN  with $G_1$ interaction function using 3 moments}
     \label{fig:G1SplineVFeed}
 \end{figure}

On Figure \ref{fig:F2G2}, we plot the convergence using the $G_2$ function which is far more irregular for the two cases of \textbf{T2}. The  convergence is much more difficult to achieve and much more erratic.
 \begin{figure}[H]
     \centering
     \begin{minipage}[t]{0.49\linewidth}
    \includegraphics[width=\linewidth]{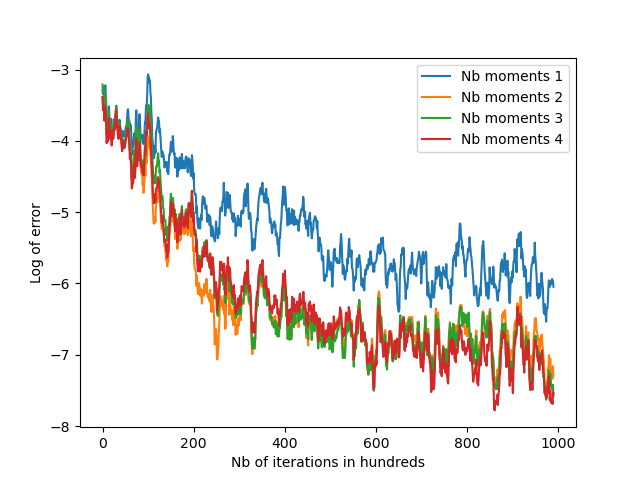}
    \caption*{ \textbf{T1}}
    \label{fig:enter-label}
    \end{minipage}
     \begin{minipage}[t]{0.49\linewidth}
    \includegraphics[width=\linewidth]{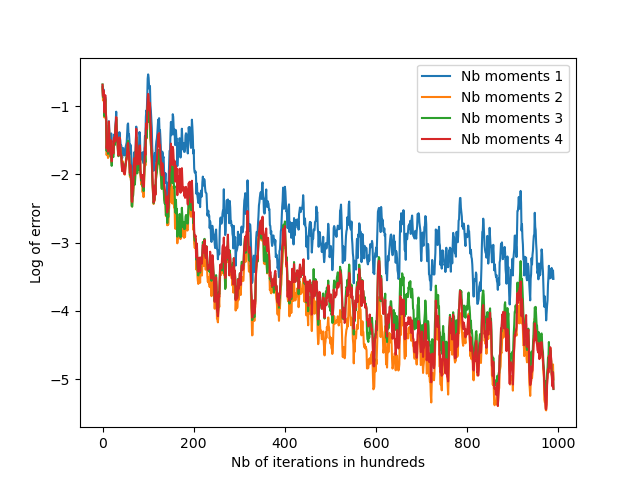}
    \caption*{ \textbf{T2}}
    \label{fig:enter-label}
    \end{minipage}
     \caption{Mean-field function from \textbf{V2}, graphon $G_2$ interaction  using the feedforward network :  $\log$ of the error obtained during training on $J$ the number of moments}
     \label{fig:F2G2}
 \end{figure}
 On Figure \ref{fig:G2SplineVFeed}, we show that the KAN networks converge better and faster than the feedforward.  As the functions to approximate are rather irregular, the P1KAN network outperforms the two other ones.
 \begin{figure}[H]
     \centering
     \begin{minipage}[t]{0.49\linewidth}
    \includegraphics[width=\linewidth]{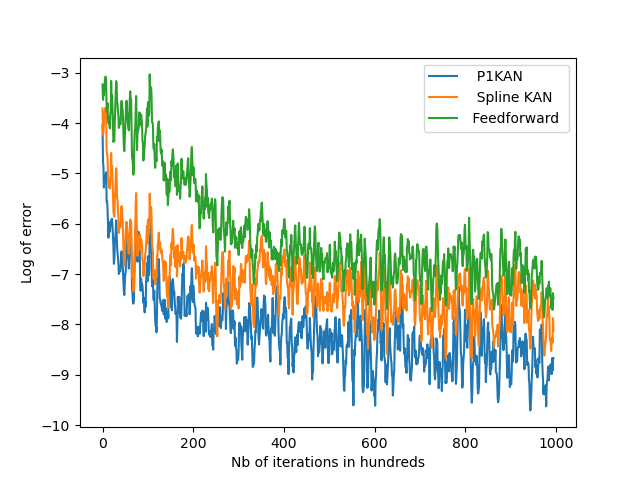}
    \caption*{  \textbf{T1}}
    \label{fig:enter-label}
    \end{minipage}
     \begin{minipage}[t]{0.49\linewidth}
    \includegraphics[width=\linewidth]{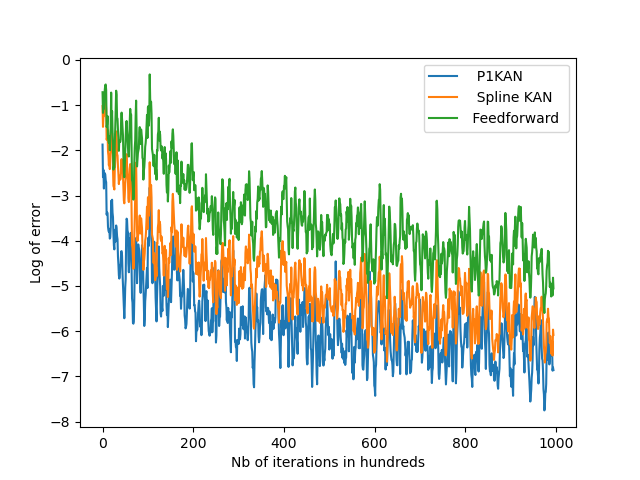}
    \caption*{ \textbf{T2}}
    \label{fig:enter-label}
    \end{minipage}
     \caption{ Comparison of feedforward and KAN  with $G_2$ interaction function using 3 moments for mean-field function \textbf{V2}.}
     \label{fig:G2SplineVFeed}
 \end{figure}
 
On Figure \ref{fig:sensor},  we show that the number of sensors $r$ used is not critical.
 \begin{figure}[H]
     \centering
    \begin{minipage}[t]{0.49\linewidth}
     \includegraphics[width=\linewidth]{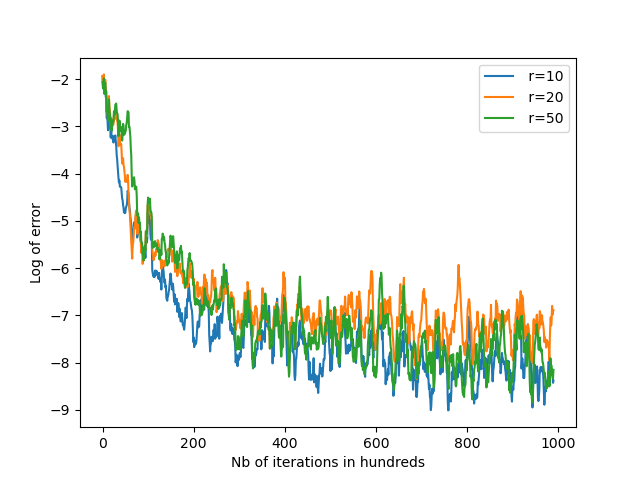}
     \caption*{$G_1$ interaction function.}
     \end{minipage}
         \begin{minipage}[t]{0.49\linewidth}
     \includegraphics[width=\linewidth]{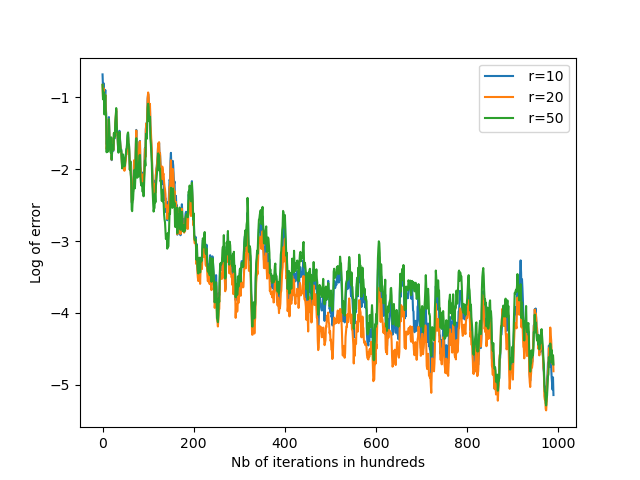}
     \caption*{$G_2$ interaction function.}
     \end{minipage}
     \caption{Impact of the number of sensors $r$ on the convergence for mean-field function  \textbf{V2} and transport map \textbf{T2} with 3 moments.}
     \label{fig:sensor}
     \end{figure}

 Finally, we adopt the sampling method in \textbf{S1}. We suppose now that $Y \sim \Nc(0,1)$, and we consider the family of random transport maps $\omega \mapsto T(\omega,\cdot,\cdot) \in L^2(\lambda \otimes \Nc(0,1))$:
 \begin{align}
     T(\omega, u,y) := A(\omega) uy + B(\omega) ( uy + u^2 y^2),
 \end{align}
 where $(A,B) \sim \Uc([0,1])^{\otimes^2}$ and then we sample $X=T(U,Y)$ with $(U,Y)$ independent of $(A,B)$.
 
 On Figure \ref{fig:SplineVFeedMixOp}, we show with $J=3$ moments that the convergence is achieved especially when a KAN network is used. We observe that the convergence curve is smoother than in the sampling method \textbf{S2}.
 Again with the $G_2$ interaction function, the P1KAN network outperform the two other networks.
 \begin{figure}[H]
     \centering
     \begin{minipage}[t]{0.49\linewidth}
    \includegraphics[width=\linewidth]{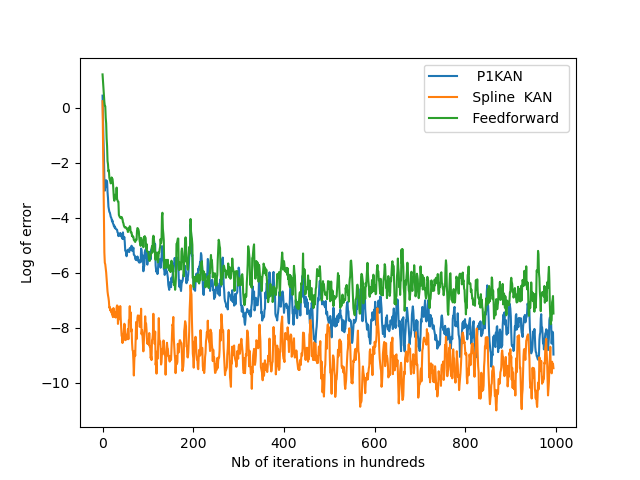}
    \caption*{  Graphon $G_1$}
    \label{fig:enter-label}
    \end{minipage}
     \begin{minipage}[t]{0.49\linewidth}
    \includegraphics[width=\linewidth]{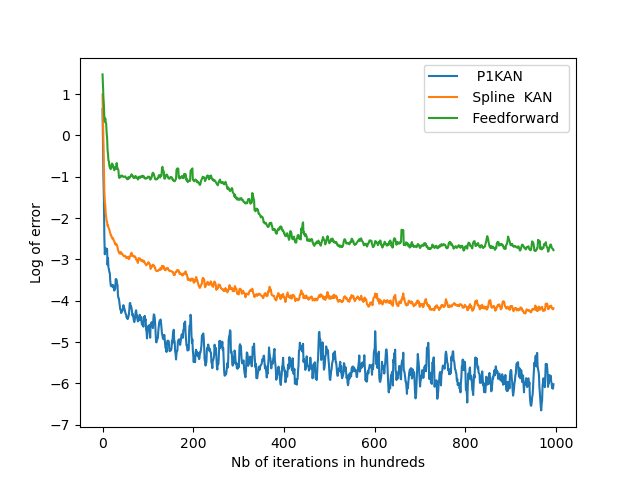}
    \caption*{Graphon $G_2$}
    \label{fig:enter-label}
    \end{minipage}
     \caption{ Comparison of feedforward and KAN  using 3 moments for mean-field function \textbf{V2} with the sampling method \textbf{S1}.}
     \label{fig:SplineVFeedMixOp}
 \end{figure}

\section{Application to optimal control of non exchangeable mean field systems}\label{sec : optimal_control} \label{sec:appli} 

In this section, we propose an application of the theory developed above to solve optimal control problem for non exchangeable mean field systems. 
Concisely, it can be formulated as the extension of the usual standard McKean-Vlasov control problem without assuming homogeneity between the agents. This naturally leads to an infinite dimensional control problem since the agents are no longer homogeneous. We present below some standard methods to solve optimal controls involving this class of controlled systems.
\subsection{Background on controlled non exchangeable mean field systems}

We first introduce the framework of optimal control on non exchangeable mean field systems in a label-state formulation. On a complete filtered probability space $(\Omega,\Fc,\P)$ satisfying the usual hypothesis, we are given the following random variables

\begin{enumerate}
    \item [$\bullet$] A uniform random variable $U$ over $[0,1]$ used for encoding the heterogeneity.
    \item [$\bullet$] A $\R^n$-valued Brownian motion $W :=(W_t)_{0 \leq t \leq T}$ independent of $U$.
    \item [$\bullet$] A $\R^d$-valued initial condition $\xi$ with law $m$ independent of $W$.
    \item [$\bullet$]  We denote by $\F^W = (\Fc_t^W)_{0 \leq t \leq T}$ the natural filtration generated by $W$ and by $\F = (\Fc_t)_{0 \leq t \leq T}$ the filtration given by $\Fc_t := \Fc_t^W \vee \sigma(U)$ where $\sigma$ denotes the $\sigma-$algebra generated by $U$, augmented with the $\P$-null sets.
\end{enumerate}

We fix drift and diffusion functions  $b, \sigma : I \times \R^d \times \Mc_{\lambda} \times A \to \R^d , \R^{d \times n}$ on which we make the following standard assumptions

\begin{Assumption}\label{assumption : regularity_coefficients}
The functions $b, \sigma : I \times \R^d \times \Mc_{\lambda} \times A \to \R^d, \R^{d \times n}$ are Borel measurable. Moreover, there exists positive constants $L \geq 0$, $M \geq 0$ such that
\begin{equation*}
\begin{cases}
| b(u,x,\mu,a) - b(u,x',\mu', a) | & \leq L \;  \big ( | x-x'| + \Wc_2(\mu,\mu') \big) \\
| \sigma(u,x,\mu,a) - \sigma(u,x',\mu',a) | &  \leq \;  L \big ( | x-x'| + \Wc_2(\mu,\mu') \big)
\end{cases}
\end{equation*}
and 
\begin{align*}
| b(u,0, \lambda \otimes \delta_{0}, a)|  + |\sigma(u,0, \lambda \otimes \delta_{0}, a) | 
&\leq \;   M \big( 1+ |a| \big),     
\end{align*}
for every $u \in I$, $x,x' \in \R^d$, $\mu,\mu' \in \Mc_\lambda$ and $a \in A$.    

\end{Assumption}
We denote by $\Ac$ the set of admissible controls that are $\F$-progressively measurable  process  valued in a convex measurable space $(A,\Ac)$ such that  $\E \Big[\int_{0}^{T} | \alpha_t|^2 \d t \Big] < \infty$, and by $\Ic_t$ the set of admissible initial conditions defined as $    \Ic_t = \big \lbrace \xi : \xi \text{ is $\Fc_t$-measurable and $\E [ | \xi|^2 \big] <  \infty$} \big \rbrace$. 

Given $\alpha \in \Ac$ and $\xi \in \Ic_t$, we consider the following controlled state process $X=(X^{\xi,\alpha}_s)_{t \leq s \leq T}$ satisfying the following SDE
\begin{align}\label{eq : SDE_label_state}
\begin{cases}
      \d X_s &= b(U,X_s, \P_{(U,X_s)}, \alpha_s) \d s + \sigma(U,X_s, \P_{(U,X_s)}, \alpha_s) \d W_s, \\
   X_t &= \xi, \\
\end{cases}
\end{align}

\begin{Theorem}\label{thm : existence_unicity_process}
    Given $\xi \in \Ic_t$ and $\alpha \in \Ac$ and under Assumption \ref{assumption : regularity_coefficients}, there exists a unique strong solution $X=(X_s)_{t \leq s \leq T}$ to Equation \eqref{eq : SDE_label_state}.
\end{Theorem}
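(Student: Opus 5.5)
The plan is a Picard fixed-point argument on the flow of joint laws, followed by classical SDE theory once the flow is frozen. Fix $\xi \in \Ic_t$ and $\alpha \in \Ac$, and let $\Sc$ be the set of continuous flows $m = (m_s)_{t \le s \le T}$ valued in $\Mc_\lambda$ with $\sup_s \int |x|^2 m_s(\d u,\d x) < \infty$. Equip $\Sc$ with the metric $d_\beta(m,m') := \sup_{t \le s \le T} e^{-\beta s}\Wc_2(m_s,m'_s)$ for some $\beta>0$ to be chosen. Since $\Mc_\lambda$ is a closed subset of $\Pc_2(I\times\R^d)$ (the pushforward by $\pr_1$ is continuous and $I$ is compact), $\Sc$ is complete.

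For $m \in \Sc$, I consider the frozen SDE
\[
\d X^m_s = b(U,X^m_s,m_s,\alpha_s)\,\d s + \sigma(U,X^m_s,m_s,\alpha_s)\,\d W_s, \qquad X^m_t = \xi .
\]
The coefficients are random through $(U,\alpha_s)$, which is $\F$-progressively measurable. By Assumption \ref{assumption : regularity_coefficients} they are Lipschitz in $x$, uniformly in the other arguments. They also satisfy the linear growth bound
\[
|b(u,x,m_s,a)| \le M(1+|a|) + L\bigl(|x| + \Wc_2(m_s,\lambda\otimes\delta_0)\bigr),
\]
and the same for $\sigma$. Since $\E\int_t^T |\alpha_s|^2\,\d s<\infty$, $\xi \in \Ic_t$, and $\sup_s \Wc_2(m_s,\lambda\otimes\delta_0)<\infty$, the standard Itô theory for Lipschitz SDEs with progressively measurable random coefficients gives a unique strong solution. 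Burkholder–Davis–Gundy and Gronwall then give $\E[\sup_s |X^m_s|^2]<\infty$. I define $\Phi(m)_s := \P_{(U,X^m_s)}$. Because $U$ is uniform and $\lambda$ is the uniform law on $I$, the first marginal of $\Phi(m)_s$ is $\lambda$, so $\Phi(m)_s \in \Mc_\lambda$. Continuity of $s\mapsto\Phi(m)_s$ in $\Wc_2$ follows from $\Wc_2(\Phi(m)_s,\Phi(m)_{s'})^2 \le \E|X^m_s - X^m_{s'}|^2$ together with continuity of the paths and dominated convergence. Hence $\Phi:\Sc\to\Sc$.

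The contraction estimate compares two flows through the synchronous coupling of $(U,X^m_s)$ and $(U,X^{m'}_s)$, which share the same $U$. This gives
\[
\Wc_2(\Phi(m)_s,\Phi(m')_s)^2 \le \E|X^m_s - X^{m'}_s|^2 .
\]
Applying Itô's isometry and the Lipschitz bounds,
\[
\E|X^m_s - X^{m'}_s|^2 \le C\int_t^s \bigl(\E|X^m_r - X^{m'}_r|^2 + \Wc_2(m_r,m'_r)^2\bigr)\,\d r ,
\]
and Gronwall yields
\[
\E|X^m_s - X^{m'}_s|^2 \le C'\int_t^s \Wc_2(m_r,m'_r)^2\,\d r .
\]
Multiplying by $e^{-2\beta s}$ and taking the supremum over $s$ gives $d_\beta(\Phi(m),\Phi(m'))^2 \le \tfrac{C'}{2\beta}\, d_\beta(m,m')^2$. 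Choosing $\beta$ large makes $\Phi$ a contraction, and Banach's fixed point theorem produces a unique $m^\star \in \Sc$ with $\Phi(m^\star)=m^\star$. Then $X := X^{m^\star}$ solves \eqref{eq : SDE_label_state}.

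For uniqueness, any strong solution $X$ has a law flow $(\P_{(U,X_s)})_s$ that lies in $\Sc$, by the same moment bound and path continuity. This flow is a fixed point of $\Phi$, hence equals $m^\star$, and pathwise uniqueness of the frozen SDE then gives $X = X^{m^\star}$ almost surely.

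I expect the main obstacle to be bookkeeping rather than any single hard estimate. One point is confirming that $\Mc_\lambda$ is closed, so that $\Sc$ is complete. The other is that the Lipschitz condition in Assumption \ref{assumption : regularity_coefficients} is stated uniformly in $(u,a)$ but the growth condition only at $(x,\mu)=(0,\lambda\otimes\delta_0)$. The linear growth bound above therefore has to be derived by combining the two through the triangle inequality, and one must check it is integrable under $\E\int |\alpha_s|^2\,\d s<\infty$. Measurability of $(\omega,s)\mapsto b(U,X_s,m_s,\alpha_s)$ comes from Borel measurability of $b$ and $\sigma$ and progressive measurability of $X$, $\alpha$ and $U$ with respect to $\F$.
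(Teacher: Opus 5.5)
Your proposal is correct and is essentially the paper's argument: a fixed point on laws with first marginal $\lambda$, obtained by freezing the measure argument, solving the resulting Lipschitz SDE, and comparing two frozen solutions through the synchronous coupling that shares $U$, $\xi$ and $W$, followed by Gronwall. The only difference is technical: you work on continuous flows of time-marginals in $\Mc_\lambda$ with the weighted metric $\sup_s e^{-\beta s}\Wc_2(m_s,m'_s)$, which gives a one-step contraction on the whole interval, whereas the paper works with path-space laws in $\Mc_\lambda(I\times\Cc^d_{[t,T]})$ and appeals to the standard Picard-iteration argument to handle the constant $C(T-t)$.
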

\begin{proof}
The proof is postponed to Appendix \ref{subsec : prop_existenceunicitySDE}.   
\end{proof}

From standard estimations, there exists a positive constant $C$ such that 
\begin{align}
    \E \Big[ \underset{t \leq s \leq T}{\text{ sup }} |X_s|^2 \Big] \leq  C \Big( 1 + \E \big[ | \xi|^2 \big] +  \E \big[ \int_{t}^{T} | \alpha_s|^2 \d s  \big] \Big).
\end{align}

We next introduce the two reward functions $f : I \times \R^d \times \Mc_{\lambda} \times A \to \R$ and $g : I \times \R^d \times \Mc_{\lambda} \to \R$ on which we make the following assumptions.

\begin{Assumption}\label{assumption : cost_functional_coefficients}
The functions $f$ and $g$ are Borel measurable and there exists a constant $M \geq 0$ such that
\begin{equation*}
\begin{cases}
 - M \big( 1+ |x|^2 + \Wc_2(\mu, \lambda \otimes \delta_{0})^2 \big) \; \leq \;   f(u,x,\mu,a)  \; \leq \;  M \big( 1 +|x|^2 + \Wc_2(\mu, \lambda \otimes \delta_{0})^2 + |a|^2 \big) \\
 |g(u,x,\mu)| \;  \leq \;   M \big( 1+ |x|^2 + \Wc_2(\mu, \lambda \otimes \delta_{0})^2 \big),
\end{cases}    
\end{equation*}
for every $u \in I$, $x \in \R^d$, $\mu \in \Mc_{\lambda}$ and $a \in A$.
\end{Assumption}
We define the cost functional as the $\R$-valued map $J$ as follows
\begin{align}\label{eq : cost_functional_NEMFC}
    J(t,\xi,\alpha) := \E \Big[\int_{t}^{T} f(U,X_s, \P_{(U,X_s)}, \alpha_s) \d t + g(U,X_T, \P_{(U,X_T)}) \Big],
\end{align}
where we stressed the dependence of $J$ in the initial condition $\xi$. Under Assumptions \ref{assumption : regularity_coefficients}, \ref{assumption : cost_functional_coefficients} and  from Theorem \ref{thm : existence_unicity_process}, we get that $J(\alpha)$ is well defined for any $\alpha \in \Ac$. The goal is now to study the optimal control problem consisting in minimizing the function $J$ over $\Ac$, that is,  computing 
\begin{align}\label{eq : value_control_problem}
    V_0 := \underset{\alpha \in \Ac}{\text{ inf }} J(0,\xi,\alpha),
\end{align}
and  to find an optimal control $\alpha^{\star} \in \Ac$, i.e. s.t $V_0 = J(\alpha^{\star})$.  More generally, at any time $t \in [0,T]$, we can define the cost functional to be minimized as
\begin{align}\label{eq : value_function_time_t}
    V(t,\xi) = \underset{\alpha \in \Ac}{\text{ inf }} J(t,\xi,\alpha), \quad t \in [0,T]. 
\end{align}
In the sequel, we will analyze two known methods namely, the Pontryagin's maximum principle and the Bellman equation to solve  \eqref{eq : value_control_problem}. The Pontryagin's maximum principle will lead to the study of a fully coupled forward backward stochatic differential equation (FBSDE) from which we will be able to characterize the optimality of a control $\alpha^{\star}$ whereas the dynamic programming equation will help us to characterize the Bellman function $V$ through a partial differential equation (PDE) and from a backward recursion which enables the possibility to design several  efficient algorithms (see \cite{pham2022mean}).

\begin{Remark}
\normalfont
\noindent 
    The current formulation (i.e. \eqref{eq : SDE_label_state}-\eqref{eq : cost_functional_NEMFC}) can be viewed as a label-state formulation of the non exchangeable mean field system studied in \cite{kharroubi2025stochastic,cao2025graphon} for the stochastic maximum principle formulation and in \cite{decrescenzo2024mean} for the derivation of the dynamic programming equation and the Bellman equation. The main issue in these formulations is the necessity to deal with an uncountable continuum of controlled state processes $(X^u)_{u \in I}$ for which joint measurability over space $I \times \Omega$ is not guaranteed due to a collection of i.i.d. Brownian motions $\lbrace W^u : u \in I \big \rbrace$. Moreover, this formulation is not well suited for numerical experiments as one would need  to discretize the number of processes $(X^{i,N})_{1 \leq i \leq N}$ to consider. Since we are working at the level of the marginal laws of the processes, we work under the label-state formulation which lacks of a pathwise interpretation but for which the equality of the laws is preserved (see \cite{Mekkaoui2025analysis}). In the sequel, we will define the necessary quantities to characterize an optimal control either in the stochastic maximum principle  or in the Bellman equation but since the proofs are really similar to the ones presented in \cite{kharroubi2025stochastic,decrescenzo2024mean}, we will just give the main ideas of the proof and refer to the proofs therein for further details.
\end{Remark}

\subsubsection{Stochastic maximum principle and FBSDE equation}

For simplicity, we will look at $t=0$ but the analysis could be extended to any time $t$. For the stochastic maximum principle, we define the real-valued Hamiltonian map $H$ as
\begin{align}\label{eq : hamiltonian_def}
    H(u,x,\mu,a,y,z) &=\;  b(u,x,\mu,a) \cdot y + \sigma(u,x,\mu,a) :z + f(u,x,\mu,a),
\end{align}
where $:$ refers to the inner product between two matrices,
for any $(u,x,\mu,a,y,z) \in I \times \R^d \times \Mc_{\lambda} \times A \times \R^d \times \R^{d \times n}$. This map will be used to derive necessary and sufficient conditions for the optimality of an admissible control. It will rely on the notions of derivative and convexity over the space $\Mc_{\lambda}$ introduced in Appendix \ref{sec : analysis_Plambda}. In fact, under some regularity assumptions that will be precised on the map $H$ and defining the process $\hat{\alpha}=(\hat{\alpha}_t)_{0 \leq t \leq T}$ as
\begin{align}
    \hat{\alpha}_t = \hat{a}(U,X_t,\P_{(U,X_t)},Y_t,Z_t) = \underset{a \in A}{\text{ inf }} H(U,X_t, \P_{(U,X_t)},Y_t,Z_t,a),
\end{align}
where $(X_t,Y_t,Z_t)_{0 \leq t \leq T}$ is the solution to the following system of FBSDE
\begin{align}\label{eq : FBSDE_system_weak_formulation}
\begin{cases}
      dX_t &=  b(U,X_t,\P_{(U,X_t)},\hat{\alpha}_t)    \d t +  \sigma(U,X_t,\P_{(U,X_t)},\hat{\alpha}_t)\d W_t, \\
    dY_t &= - \partial_x H(U,X_t, \P_{(U,X_t)},P_t, Z_t, \hat{\alpha}_t) \d t - \tilde{\E} \big[ \partial_{\tilde{x}} \frac{\delta}{\delta m } H(\tilde{U},\tilde{X}_t, \P_{(U,X_t)}, \tilde{Y}_t, \tilde{Z}_t,  \tilde{\hat{\alpha}}_t) (U,X_t)\big] \d t \\
    &\quad + \;  Z_t \d W_t \\
    Y_T  &=  \partial _x g(U,X_T,\P_{(U,X_T)}) + \tilde{\E} \Big[ \partial_{\tilde{x}} \frac{\delta}{\delta m } g(\tilde{U}, \tilde{X}_T , \P_{(U,X_T)})(U,X_T) \Big], 
\end{cases}
\end{align}
where $(\tilde{U},\tilde{X}_t,\tilde{Y}_t,\tilde{Z}_t,\tilde{\hat{\alpha}}_t)$ is an independant copy $(U,X_t,Y_t,Z_t,\alpha_t)$ defined on another probability space $(\tilde{\Omega},\tilde{\Fc},\tilde{\P})$, we will show that $\hat{\alpha}$ yields an optimal control, i.e. a solution to \eqref{eq : value_control_problem}.

\vspace{1mm}

We now make the necessary assumptions on the regularity on the maps involved in the control problem to derive the stochastic maximum principle.

\vspace{1mm}

\begin{Assumption}\label{assumption2: regularity_coefficients}
\noindent 
\begin{enumerate}
    \item [(1)] The maps $(b,\sigma)$ are differentiable with respect to $(x,a)$. Moreover, the maps $\partial_x(b,\sigma)$ are assumed to be uniformly bounded.
    Finally, the maps $(x,\mu,a) \mapsto \partial_x (b,\sigma,f)(u,x,\mu,a)$ and  $(x,\mu,a) \mapsto \partial_a(b,\sigma,f)$ are continuous for $\lambda-\text{a.e}$ $u \in I$.
    \item [(2)] The maps $(b,\sigma)$ are assumed to have Fréchet differentiable linear functional derivatives $\partial_{\tilde{x}} \frac{\delta}{\delta m } b$ and $\partial_{\tilde{x}} \frac{\delta}{\delta m } \sigma$ satisfying the following properties
    \begin{align}
    \begin{cases}
        | \partial_{\tilde{x}} \frac{\delta}{\delta m }b(u,x,\mu,a)(\tilde{u},\tilde{x}) - \partial_{\tilde{x}} \frac{\delta}{\delta m } b(u,x',\mu',a')(\tilde{u}',\tilde{x}') |\leq L \big( |x-x'| + |\tilde{x}- \tilde{x}'| + \Wc_2(\mu,\mu') \big), \\
         | \partial_{\tilde{x}} \frac{\delta}{\delta m }\sigma(u,x,\mu,a)(\tilde{u},\tilde{x}) - \partial_{\tilde{x}} \frac{\delta}{\delta m } \sigma(u,x',\mu',a')(\tilde{u}',\tilde{x}') |\leq L \big( |x-x'| + |\tilde{x}- \tilde{x}'| + \Wc_2(\mu,\mu') \big),
    \end{cases}
    \end{align}
    for every $u,\tilde{u} \in I$, $x,x',\tilde{x},\tilde{x}' \in \R^d$ and $\mu,\mu' \in \Mc_\lambda$ and 
    \begin{align}
         \big | \partial_{\tilde{x}} \frac{\delta}{\delta m }b(u,0, \lambda \otimes \delta_0,a)| + | \partial_{\tilde{x}} \frac{\delta}{\delta m } \sigma(u,0, \lambda \otimes \delta_0, a) | \leq  M \big(1+ |a| \big),
    \end{align}
    for every $u,\tilde{u} \in I$ and $a \in A$.
    \item [(3)] The maps $f$ and $g$ are differentiable with respect to $(x,a)$. Moreover, $\partial_x(f,g)$ and $\partial_a f$ are assumed to be uniformly bounded  $\lambda(\d u )-\text{a.e}$. Finally, the maps $(x,\mu,a) \mapsto \partial_x f(u,x,\mu,a)$, $\partial_a f(u,x,\mu,a)$ and $(x,\mu) \mapsto \partial _x g(u,x,\mu)$ are continuous for $\lambda-\text{a.e}$ $u \in I$.
    \item [(4)] The functions $f$ and $g$ admit Fréchet differentiable linear functional derivatives. Moreover, for any progressively measurable process $X$  such that $ \E \big[\underset{0 \leq t \leq T}{\text{ sup} } |X_t|^2 \big] < \infty$, the following quantities are uniformly bounded $\lambda(\d u)-\text{a.e}$
    \begin{align}
        \tilde{\E} \Big[ | \partial_{\tilde{x}} \frac{\delta}{\delta m} f(u,x,\mu,a)(\tilde{U},\tilde{X})|^2 \Big], \text{ and }  \tilde{\E} \Big[ | \partial_{\tilde{x}} \frac{\delta}{\delta m} g(u,x,\mu)(\tilde{U},\tilde{X})|^2 \Big].
    \end{align}
\end{enumerate}

\end{Assumption}

\begin{Proposition}\label{prop : pontryagin_formulation}
    Let Assumptions \ref{assumption : regularity_coefficients}, \ref{assumption : cost_functional_coefficients}, \ref{assumption2: regularity_coefficients} hold and assume furthermore that the Hamiltonian map $H$ defined in \eqref{eq : hamiltonian_def} is a convex function in its last variable, i.e. the map
    \begin{align}
        A \ni a \mapsto  H(u,x,\mu,y,z,a),
    \end{align}
    is convex for any $(u,x,\mu,y,z) \in I \times \R^d \times \Mc_\lambda \times \R^d \times \R^{d \times n }$. Let $(\alpha_t)_{0 \leq t \leq T}$ be an optimal control and $(X_t,Y_t,Z_t)_{0 \leq t \leq T}$ the respectively associated controlled state processes  and  adjoint processes defined in \eqref{eq : FBSDE_system_weak_formulation}. Then the optimal control problem is a point-wise minimizer of $H$, i.e. for any $a \in A$.
    \normalfont
    \begin{align}\label{eq : optimal_control}
        H(U,X_t, \P_{(U,X_t)}, Y_t,Z_t, \alpha_t) \leq H(U,X_t,\P_{(U,X_t)}, Y_t,Z_t, a) \quad \d t \otimes \d \P-\text{a.e}.
    \end{align}
\end{Proposition}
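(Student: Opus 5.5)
We follow the classical necessary-condition route of the Pontryagin principle for McKean--Vlasov control (as in \cite{carmona_probabilistic_2018a}), adapted to the label-state setting as in \cite{kharroubi2025stochastic}. The label $U$ is $\Fc_0$-measurable and frozen along the dynamics. Hence all perturbations can be performed in $\Ac$ without altering the first marginal $\lambda$. Flows $\P_{(U,X_t)}$ therefore stay in $\Mc_\lambda$, and the calculus on $\Mc_\lambda$ from Appendix \ref{sec : analysis_Plambda} applies.

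\textbf{Step 1 (spike-free convex perturbation).} Fix the optimal $\alpha$ and an arbitrary $\beta \in \Ac$. Set $\alpha^\varepsilon := \alpha + \varepsilon(\beta - \alpha)$, which is admissible since $A$ is convex. Let $X^\varepsilon$ be the associated state, well-posed by Theorem \ref{thm : existence_unicity_process}. Define the variation process $\delta X$ as the solution of the linear SDE with random coefficients
\begin{align}
\d\, \delta X_t = \Big(\partial_x b_t\, \delta X_t + \tilde{\E}\big[\partial_{\tilde{x}} \tfrac{\delta}{\delta m} b_t(\tilde{U},\tilde{X}_t)\, \delta\tilde{X}_t\big] + \partial_a b_t\, (\beta_t-\alpha_t)\Big)\d t + (\text{same for } \sigma)\, \d W_t ,
\end{align}
with $\delta X_0=0$. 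Using Assumption \ref{assumption2: regularity_coefficients}(1)-(2), the boundedness of $\partial_x(b,\sigma)$, and Gronwall, we show
\begin{align}
\lim_{\varepsilon\to 0}\E\Big[\sup_t \big|\tfrac{X^\varepsilon_t - X_t}{\varepsilon} - \delta X_t\big|^2\Big]=0 .
\end{align}
The measure term is handled via the chain rule on $\Mc_\lambda$: the derivative of $\varepsilon\mapsto \P_{(U,X^\varepsilon_t)}$ acts only through the $x$-component, because the $U$-component is unchanged.

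\textbf{Step 2 (Gateaux derivative of $J$).} By optimality, $0 \le \tfrac{d}{d\varepsilon} J(0,\xi,\alpha^\varepsilon)|_{\varepsilon=0}$. Using Step 1 and dominated convergence, justified by Assumption \ref{assumption2: regularity_coefficients}(3)-(4) and the growth bounds of Assumption \ref{assumption : cost_functional_coefficients}, we obtain
\begin{align}
\E\Big[\int_0^T \big(\partial_x f_t\, \delta X_t + \tilde{\E}[\partial_{\tilde{x}}\tfrac{\delta}{\delta m} f_t(\tilde{U},\tilde{X}_t)\,\delta\tilde{X}_t] + \partial_a f_t(\beta_t-\alpha_t)\big)\d t + \partial_x g\, \delta X_T + \tilde{\E}[\cdots]\Big].
\end{align}
Fubini together with the exchangeability of $(U,X)$ and its independent copy lets us swap tilded and untilded variables. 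This rewrites the mean-field terms as $\E[\tilde{\E}[\partial_{\tilde{x}}\tfrac{\delta}{\delta m}(\cdot)(\tilde{\cdot})(U,X_t)]\,\delta X_t]$, which are exactly the terms appearing in the adjoint equation \eqref{eq : FBSDE_system_weak_formulation}.

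\textbf{Step 3 (duality).} Apply It\^o's product rule to $Y_t\cdot\delta X_t$ between $0$ and $T$, and take expectations; the local martingale parts are true martingales by the $L^2$ bounds. The $\delta X$-terms cancel against the adjoint drift, again after the tilde swap. This yields
\begin{align}
\frac{d}{d\varepsilon}J(\alpha^\varepsilon)\Big|_{\varepsilon=0} = \E\Big[\int_0^T \partial_a H(U,X_t,\P_{(U,X_t)},Y_t,Z_t,\alpha_t)\cdot(\beta_t-\alpha_t)\,\d t\Big] \ge 0 .
\end{align}

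\textbf{Step 4 (localization and convexity).} For $a\in A$ and any progressive set $B\subset[0,T]\times\Omega$, take $\beta := a\mathbf 1_B + \alpha\mathbf 1_{B^c}$. Since $B$ is arbitrary, we get $\partial_a H(\cdots,\alpha_t)\cdot(a-\alpha_t)\ge 0$ $\d t\otimes\d\P$-a.e. To make the exceptional null set independent of $a$, we restrict to a countable dense subset of $A$ and use continuity of $\partial_a H$. Convexity of $a\mapsto H$ then gives $H(\cdots,a)\ge H(\cdots,\alpha_t)+\partial_a H\cdot(a-\alpha_t)\ge H(\cdots,\alpha_t)$, which is \eqref{eq : optimal_control}.

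The main obstacle is Step 1, the convergence of the difference quotient, in particular the measure-derivative term. The approach is to write $b(\mu^\varepsilon)-b(\mu)$ as $\int_0^1\tilde{\E}[\partial_{\tilde{x}}\tfrac{\delta}{\delta m}b(\mu^{\varepsilon,\theta})(\tilde{U},\tilde{X}^{\theta})](\tilde X^\varepsilon-\tilde X)\,\d\theta$. Here $\tilde X^{\theta}$ interpolates the coupled copies with the same $\tilde{U}$, so that $\mu^{\varepsilon,\theta}$ stays in $\Mc_\lambda$. The Lipschitz bound of Assumption \ref{assumption2: regularity_coefficients}(2) then controls the remainder by $\varepsilon\,\E[\sup|X^\varepsilon-X|^2]^{1/2}$.
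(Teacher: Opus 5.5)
Your proposal is correct and follows the same route as the paper's sketch in Appendix B.2: a convex perturbation $\alpha+\varepsilon(\beta-\alpha)$, the linearized variation process, the identification of the Gâteaux derivative of $J$ with $\E\big[\int_0^T\partial_a H\cdot(\beta_t-\alpha_t)\,\d t\big]$, and then convexity of $a\mapsto H$. You make explicit the duality step (Itô product rule against $Y$ with the tilde swap) and the localization by progressive sets, which the paper leaves as ``straightforward computations''; as in the paper, though, the $O(\varepsilon)$ estimate and the $L^2$ convergence of the difference quotient in your Step 1 tacitly need a bound on $\partial_a(b,\sigma)$, which Assumption \ref{assumption2: regularity_coefficients} does not state.
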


\begin{proof}
    As the proof is fairly similar to the one presented in \cite{kharroubi2025stochastic,cao2025graphon}, we present only the key steps in Appendix \ref{appendix : proof_optimality_alpha}.
\end{proof}

\begin{Remark}
\normalfont

Under the following stronger convexity assumptions, we can turn the necessary condition into a sufficient condition, namely we need to assume that
\begin{enumerate}
    \item [(1)] The map $\R^d \times \Mc_{\lambda} \ni (x,\mu) \mapsto g(U,x,\mu)$ is convex $\P-\text{a.s}$
    \item [(2)] The map $\R^d \times \Mc_{\lambda} \times A \ni (x,\mu,a) \mapsto H(U,x,\mu,Y_t,Z_t,a)$ is convex $\d t \otimes \d \P-\text{a.e}$.
\end{enumerate}
With this additional requirements and if $\alpha^{\star}$ satisfies \eqref{eq : optimal_control}, then $\alpha^{\star}$ yields an optimal control.
\end{Remark}

At this point, we are able to identify a potential optimal control $\alpha$ in the form of a progressively measurable map of $(U,X,Y,Z)$ as a minimizer of the Hamiltonian map. However, plugging the potential optimal control obtained from \eqref{eq : optimal_control} in the forward and adjoint equations leads to the study of a fully coupled FBSDE, namely \eqref{eq : FBSDE_system_weak_formulation}, for which we need to prove existence and unicity. Motivated by the numerical applications, we will concentrate ourselves to the subclass of linear dynamics and quadratic cost functional for which we will prove existence and uniqueness in the following section. In the existing literature, existence and uniqueness of the resulting FBSDE system can be proven under linear dynamics but for a slightly broader class of cost functional. We refer to \cite{kharroubi2025stochastic,carmona2015fbsde} for full details.

\subsubsection{Dynamic programming and HJB equation.}

Solution to \eqref{eq : value_function_time_t} can also be characterized by the dynamic programming method. We will restrict ourselves to give the main results that will be used to design algorithms. As we show for the case of the stochastic maximum principle, the proof relies on similar arguments and we refer to \cite{decrescenzo2024mean} for a rigourous analysis of the HJB equation (note that their HJB is stated on the space $L^2(I;\Pc_2(\R^d))$ but the analysis can be easily extended to our setting to the space $\Mc_{\lambda}$. This approach is based  on a recursive argument after defining the decoupled value function $V : I \times [0,T] \times \R^d \times \Mc_{\lambda}$ of problem which satisfies 
\begin{align}\label{eq : dynamic_programming_equation}
    V(t,U,X_t, \P_{(U,X_t)}) = \underset{\alpha \in \Ac}{\text{inf }} \E \Big[ \int_{t}^{t+h} f(U,X_s, \P_{(U,X_s)}, \alpha_s) \d s    + V(t+h, X_{t+h}, \P_{(U,X_{t+h})}) | \Fc_t \Big],
\end{align}
for any $t \in [0,T)$ and $h \in (0,T-t]$. and starting from the terminal condition $V(T,u,x,\mu) = g(u,x,\mu)$ for $(u,x,\mu)  \in I \times \R^d \times \Mc_{\lambda}$.
Following the result in \cite{decrescenzo2024mean} which can be adapted to our current setting (in view of the Pontryagin's Maximum principle) and assuming that  for any $(u,t,x,\mu,p,M)  \in I \times [0,T] \times \R^d \times \Mc_{\lambda} \times \R^d \times \R^{d \times d}$,
there exists a minimizer 
\begin{align}\label{eq : argmmin_control}
    \hat{a}(u,x,\mu,p,M)  \in \underset{a \in A}{\text{arg min }} \mrH(u,x,\mu,p,M,a) 
\end{align}
where the Hamiltonian map $\mrH$ is defined as
\begin{align}
    \mrH(u,x,\mu,p,M,a) := b(u,x,\mu,a) \cdot p + \frac{1}{2} \sigma \sigma^{\top}(u,x,\mu,a): M + f(u,x,\mu,a),
\end{align}
one can show by sending formally $h \to 0$ in \eqref{eq : dynamic_programming_equation} that 
\begin{align}\label{eq : HJB equation}
    \begin{cases}
        \partial_t V(t,u,x,\mu) + b\big(u,x,\mu, \hat{a}(u,x,\Uc(u,t,x,\mu), \partial_x \Uc(u,t,x,\mu) \big) \cdot \partial_x V(u,t,x,\mu) \\ + \frac{1}{2} \sigma \sigma^{\top}\big(u,x,\mu, \Uc(u,t,x,\mu), \partial_x \Uc(t,u,x,\mu)\big): \partial^2_{x} V(t,u,x,\mu) \\
        +\E_{(U,\xi)\sim \mu} \Big[  b \big(U,\xi,\mu, \hat{a}(U,\xi, \Uc(t,U,\xi,\mu), \partial_x \Uc(t,U,\xi,\mu) \big) \cdot \partial_{\tilde{x}} \frac{\delta}{\delta m} V(t,u,x,\mu)(U,\xi)  \\+ \frac{1}{2} \sigma \sigma^{\top}(t,U,\xi,\mu, \hat{a}(U,\xi,\Uc(t,U,\xi,\mu), \partial_x \Uc(t,U,\xi,\mu) \big) :\partial^2_{\tilde{x}} \frac{\delta}{\delta m} V(t,u,x,\mu)(U,\xi) \Big] \\
        +f \big(u,x,\mu,\hat{a}(u,t,x,\Uc(t,u,x,\mu),\partial_x \Uc(t,u,x,\mu) \big) + \frac{1}{2} \Uc^2(t,u,x,\mu)=0\\
        V(T,u,x,\mu) = g(u,x,\mu),
\end{cases}
\end{align}
where the master field $\Uc : I \times [0,T] \times \R^d \times \Mc_{\lambda}$ is defined as 
\begin{align}
    \Uc(t,u,x,\mu) &:= \partial_x V(t,u,x,\mu) + \E_{(U,\xi) \sim \mu} \big[ \partial_{\tilde{x}} \frac{\delta}{\delta m} V(t,U,\xi,\mu)(u,x) \big] \\
    &=\partial_{\tilde{x}} \frac{\delta}{\delta m}v(t,\mu)(u,x), \quad \text{ with $v(t,\mu) := \E_{(U,\xi) \sim \mu} \big[ V(t,U,\xi,\mu) \big]$.}
\end{align}
If the optimal feedback control $\hat{a}$ obtained in \eqref{eq : argmmin_control} is Lipschitz in all its variables, then we get from \eqref{assumption : regularity_coefficients} an optimal feedback control given by
\begin{align}
    \mathfrak{a}^{\star}(t,u,x,\mu)  = \hat{a}\big(u,x,\mu, \Uc(t,u,x,\mu), \partial_x \Uc(t,u,x,\mu) \big), \quad (t,u,x,\mu) \in [0,T]\times I \times \R^d \times \Mc_{\lambda}.
\end{align}
\subsection{The class of linear quadratic control problem}\label{subsec : linear_quadratic_control_problem}

For sake of simplicity, we will present the computations in the case of a constant volatility term $\sigma$ but the whole analysis could be performed by relaxing this hypothesis. 

\noindent Given $\alpha \in \Ac$, and $\xi$ an admissible initial condition and the set of controls $A \subset \R^m$ for $m \in \N^{\star}$, we consider the controlled state process satisfying the SDE
\begin{align}
\begin{cases}
    \d X_t &= \Big[A(U) + B(U) X_t + \E_{(\tilde{U},\tilde{X}_t) \sim \P_{(U,X_t)}} \big[ G_B(U,\tilde{U}) \tilde{X}_t \big] + C(U) \alpha_t \Big] \d t + \sigma(U) \d W_t, \\
    X_0 &= \xi,
\end{cases}
\label{eq:XForwardLQ}
\end{align}
where $A \in L^2(I; \R^d)$, $B \in L^{\infty}(I ;\R^{d \times d})$, $C \in L^{\infty}(I ;\R^{d \times m})$, $G_B \in L^2_{\text{sym}}(I \times I ; \R^{d \times d})$ and $\sigma \in L^2(I;\R^{d \times n})$ where the spaces $L^2$, $L^{\infty}$ and $L^2_{\text{sym}}$ have been introduced in \ref{subsec:  notations_lq_example}.

The cost functional $J$ is given by 
\begin{align}
    J(\alpha) &= \E \bigg[ \int_{0}^{T} \Big( Q(U) \big( X_t - \tilde{\E} \big[ \tilde{G}_Q(U,\tilde{U}) \tilde{X}_t]\big) \cdot  (X_t   - \tilde{\E} \big[ \tilde{G}_Q(U,\tilde{U}) \tilde{X}_t]\big)  + \alpha_t^{\top} N(U) \alpha_t \Big) \d t \\
    &\qquad \qquad+  H(U) \big( X_T - \tilde{\E} [  \tilde{G}_H(U,\tilde{U}) \tilde{X}_T ] \big) \cdot \big( X_T -  \tilde{\E} [  \tilde{G}_H(U,\tilde{U}) \tilde{X}_T ] \big)    \bigg],
    \label{eq:JLQ}
\end{align}
where $Q,H \in L^{\infty}(I ;\S^d_{+})$ where $\S^d_{+}$ denotes the set of positive symmetric matices over $\R^{d \times d}$, $\tilde{G}_Q, \tilde{G}_H \in L^2_{\text{sym}}(I \times I ; \R^{d \times d})$ and $N \in L^{\infty}(I ; \S^m_{ >+})$ where $\S^m_{> +}$ denotes the set of positive definite symmetric matrices over $\R^{m \times m}$. Moreover, we suppose that there exists $c > 0$ such that 
\begin{align}
    \langle N(U)y, y \rangle \geq c |y|^2, \quad \P-\text{a.s}, \quad \forall y \in \R^m.
\end{align}
We also suppose that the operators $T_{\tilde{G}_Q}$ and $T_{\tilde{G}_H}$ are positive symmetric operators (where the operator notations have been introduced in  \ref{subsec:  notations_lq_example}).
\begin{Remark}\label{rmk : cross_terms_control_state}
\normalfont
     Under the assumptions on the model coefficients, we notice that for any admissible initial condition $\xi$ and any $\alpha \in \Ac$
    \begin{align}\label{eq : positivity_cost_functional}
        J(t,\xi,\alpha) \geq 0, \quad \forall t \in [0,T].
    \end{align}
This assumption is required as it will help us to derive an a-priori estimate on the triangular Riccati system which will arise from this control problem. However, in the numerical examples, we may add cross-product terms between the state and the control namely terms in the form
$2 \alpha_t^{\top} I(U) X_t$  and $ \tilde{\E} [\alpha_t^{\top}  G_I(U,\tilde{U}) \tilde{X}_t]$ where $I \in L^{\infty}(I;\R^{m \times d})$ and $G_I \in L^2( I \times I ; \R^{m \times d})$ even if we are not able to prove the existence and uniqueness of the associated Riccati equations in this setting.
\end{Remark}

Recalling the FBSDE system in \eqref{eq : FBSDE_system_weak_formulation}, and following Proposition \ref{prop : pontryagin_formulation}, an optimal control $\alpha^{\star}$ should satisfy 
\begin{align}
    \alpha_t^{\star} = - \frac{1}{2} N(U)^{-1} C(U)^{\top} Y_t ,
\end{align}
and following the notion of derivative introduced in \eqref{def :derivative_notion}, we end up with
\footnotesize
\begin{align}\label{eq : FBSDE_linear_quadratic}
\begin{cases}
    \d X_t &= \Big( A(U) +  B(U) X_t  + \tilde{\E} \Big[  G_B(U,\tilde{U}) \tilde{X}_t \Big]   - \frac{1}{2}  C(U) N(U)^{-1} C(U)^{\top} Y_t \Big) \d t   \\
    &\quad +\sigma (U) \d W_t \\
    \d Y_t  &= \Big( C_Y(U)  Y_t +  2 C_X(U) X_t + 2 \tilde{\E} \Big[ \Psi_X(U,\tilde{U}) \tilde{X}_t \Big] + \tilde{\E} \Big[ \Psi_Y(U,\tilde{U}) \tilde{Y}_t \Big] \Big) \d t  + Z_t \d W_t, \\
    X_0 &= \xi, \\
    Y_T &= 2 \Big( H(U) X_T + \tilde{\E} \big[ G_H(U,\tilde{U}) \tilde{X}_T] \big] \Big),
\end{cases}
\end{align}
\normalsize
\begin{align}
\begin{cases}
    C_Y(u) &=  B(u)^{\top} ,\\
    C^X_t(u) &= Q(u)   ,\\
    \Psi^X_t(u, \tilde{u}) &=  G_Q(u,\tilde{u}), \\
    \Psi^Y_t(u,\tilde{u}) &= G_B^{\star}(u,\tilde{u}) - G_I^{\star}(u,\tilde{u}) N_t(\tilde{u})^{-1} \Gamma_t(\tilde{u}) , \\
    G_Q(u,\tilde{u}) &=  (\tilde{G}_Q \circ Q \circ \tilde{G}_Q)(u,\tilde{u}) - \big( Q(u) + Q(\tilde{u}) \big) \tilde{G}_Q(u,\tilde{u}), \\
    G_H(u,\tilde{u}) &= (\tilde{G}_H \circ H \circ \tilde{G}_H)(u,\tilde{u}) - \big( H(u) + H(\tilde{u}) \big) \tilde{G}_H(u,\tilde{u})
\end{cases}
\end{align}
where we introduced the notations (see Appendix \ref{subsec:  notations_lq_example} for the $\circ$ and $^{\star}$ notations).

\begin{Theorem}\label{thm : optimal_control_form}
    Under the assumptions on the model coefficients,  the FBSDE \eqref{eq : FBSDE_linear_quadratic} is uniquely solvable and the optimal control $\hat{\alpha}$ is then given as 
    \begin{align}
    \label{eq:opContLQ}
        \hat{\alpha}_t = - \frac{1}{2} N(U)^{-1}  C(U)^{\top} Y_t, \quad \forall t \in [0,T].
    \end{align}
    Moreover, we have the following form for the adjoint process $Y=(Y_t)_{0 \leq t \leq T}$
    \begin{align}\label{eq : form_adjoint_process}
        Y_t = K_t(U) X_t + \tilde{\E}_{(\tilde{U},\tilde{X}_t) \sim \P_{(U,X_t)}} \big[ \bar{K}_t(U,\tilde{U}) \tilde{X}_t \big] + \Lambda_t(U),
    \end{align}
    where $K \in \Cc_1 \big([0,T];L^{\infty}([0,T] ;  \S^d_+)\big)$ is the unique solution of the infinite dimensional Riccati equations 
    \begin{align}\label{eq :standard_Riccati}
    \begin{cases}
       \dot{K}_t(u) + \Phi(u,K_t(u) ) -  U(u,K_t(u)) N(u)^{-1}  U(u,K_t(u)) \; = 0  \\
        K_T(u) \; = \;  H(u)
    \end{cases}
    \end{align}
    for $\lambda(\d u )-\text{a.e}$ and where we introduced the measurable maps $\Phi : I \times  \S^d_{+} \to \R^{d \times d}$ and $U : I \times \S^d_{+} \to \R^{\d \times d}$ as
    \begin{align}
    \begin{cases}
        \Phi(u,\kappa) &:=  B(u)^{\top} \kappa + \kappa^{\top} B(u), \\
        U(u,\kappa) &:= C(u)^{\top} \kappa.
    \end{cases}
    \end{align}    
    Moreover, $\bar{K} \in \Cc_1\big([0,T],L^2_{\text{sym}}(I \times I, \R^{d \times d})\big)$ is the unique solution to the abstract Riccati equation on the Hilbert space $L^2(I \times I ; \R^{d \times d})$
    \begin{align}\label{eq : abstract_Riccati}
    \begin{cases}
        \dot{\bar{K}}_t + \Psi(t,K_t, \bar{K}_t) - L(t,K_t, \bar{K}_t) = 0, \\
        \bar{K}_T = G_H,
    \end{cases}
    \end{align}
    where we introduced the measurable map $\Psi,L : [0,T]  \times L^{\infty}(I ;\S^d_{+})\times L^2(I \times I ; \R^{d \times d }) \to L^2(I \times I ; \R^{d \times d})$ and $V : L^2(I \times I ; \R^{d \times d}) \to L^2(I \times I ; \R^{d \times d})$ defined as  
    \begin{align}
    \begin{cases}
        \Psi(t, K_t, \bar{k})(u,v) &:= K_t(u) G_B(u,v) + G_B^{\star}(u,v) K_t(v) + B(u)^{\top} \bar{k}(u,v) + \bar{k}^{\star}(u,v) B(v) \\\
        &\quad + \big( \bar{k}^{\star} \circ  G_A \big)(u,v) + \big( G_A^{\star} \circ \bar{k})(u,v)  + G_Q(u,v), \\
        L(t,K_t, \bar{k})(u,v) &:=U(u,K_t(u))  N(u)^{-1} V(u,v)(\bar{k}) - V(u,v)^{\star}(\bar{k}) N(v)^{-1} U(v,K_t(v)) \\
        &\quad- \big( V^{\star} \circ N \circ V)(u,v), \\
        V(u,v)(\bar{k}) &:= C(u)^{\top} \bar{k}(u,v),
    \end{cases}
    \end{align}
    and where $\Lambda \in \Cc_{1}([0,T];L^2(I ;\R^d) \big)$ is the unique solution to the linear equation on the Hilbert space $L^2(I ;\R^d)$
    \begin{align}
    \begin{cases}
        \dot{\Lambda}_t + F(t,\Lambda_t) - M(t,\Lambda_t) = 0, \\
        \Lambda_T = 0.
    \end{cases}
    \label{eq:Lambda}
    \end{align}
    where we introduced the measurable maps $F,M : [0,T] \times L^2(I ;\R^d) \to L^2(I;\R^d)$ defined as
    \begin{align}
    \begin{cases}
        F(t,\lambda)(u) &:= B(u)^{\top} \lambda(u) + T_{G_A^{\star}}(\lambda)(u)  + K_t(u) A(u) + T_{\bar{K}_t}(A)(u),\\
        M(t,\lambda)(u) &:= U(u,K_t(u))^{\top} N(u)^{-1} C(u)^{\top} \lambda(u) + T_{V^{\star}}(NC^{\top} \lambda)(u).
    \end{cases}
    \end{align}
\end{Theorem}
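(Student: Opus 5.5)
The plan is the classical decoupling-field argument for linear–quadratic problems, adapted to the label variable $U$. By Proposition \ref{prop : pontryagin_formulation} and the strict convexity of $a \mapsto H$ (coming from $N(U) \geq c\,I_m$), any optimal control satisfies $\hat{\alpha}_t = -\frac12 N(U)^{-1}C(U)^\top Y_t$. Plugging this into the forward and backward equations gives the fully coupled linear McKean–Vlasov FBSDE \eqref{eq : FBSDE_linear_quadratic}.

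\textbf{Step 1 (Riccati system).} I first solve the three equations in cascade.
\begin{enumerate}
\item The standard Riccati equation \eqref{eq :standard_Riccati} is solved for $\lambda(\d u)$-a.e. $u$ as a finite-dimensional matrix Riccati equation. Its coefficients $B,C,N^{-1},H$ are essentially bounded. Positivity $Q,H\geq 0$ gives the a priori bounds $0\leq K_t(u)\leq C$, uniform in $u$, via the usual control interpretation of the local (non-interacting) LQ problem. Measurability in $u$ follows from continuous dependence on the coefficients. This yields $K\in \Cc_1([0,T];L^\infty(I;\S^d_+))$.
\item With $K$ fixed, equation \eqref{eq : abstract_Riccati} is an abstract Riccati equation on the Hilbert space $L^2(I\times I;\R^{d\times d})$. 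Its nonlinearity $V^\star\circ N\circ V$ is locally Lipschitz, using Hilbert–Schmidt composition estimates $\|k_1\circ k_2\|_{L^2}\leq \|k_1\|_{L^2}\|k_2\|_{L^2}$. So there is a unique local solution by Picard iteration.
\item To make it global, I need an a priori bound on $\bar K_t$. Here I use the positivity \eqref{eq : positivity_cost_functional} from Remark \ref{rmk : cross_terms_control_state}: the value of the control problem started at $(t,\xi)$ is the quadratic form $\E[K_t(U)\xi\cdot\xi]+\E\tilde\E[\bar K_t(U,\tilde U)\tilde\xi\cdot\xi]$. Nonnegativity bounds the operator $T_{K_t}+T_{\bar K_t}$ from below by $0$. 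Comparison with the zero control bounds it from above by a constant uniform in $t$. Together these control the $L^2$ norm of $\bar K_t$, so the solution extends to $[0,T]$.
\item Finally, \eqref{eq:Lambda} is linear with bounded coefficients on $L^2(I;\R^d)$, so it has a unique global solution.
\end{enumerate}

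\textbf{Step 2 (verification of the ansatz).} Define $X$ as the solution of the forward equation with $Y_t$ replaced by the ansatz \eqref{eq : form_adjoint_process}. This is a linear McKean–Vlasov SDE with Lipschitz coefficients, so Theorem \ref{thm : existence_unicity_process} applies. Then set $Y_t$ by \eqref{eq : form_adjoint_process} and $Z_t := K_t(U)\sigma(U)$. Apply Itô's formula to $K_t(U)X_t$. For the term $\tilde{\E}[\bar K_t(U,\tilde U)\tilde X_t]$, differentiate in time using the dynamics of the independent copy. This is a deterministic function of $(t,U)$, so no martingale part appears.

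Matching the $\d t$ terms against the drift of $Y$ in \eqref{eq : FBSDE_linear_quadratic} splits into three groups:
\begin{itemize}
\item the terms linear in $X_t$ give \eqref{eq :standard_Riccati};
\item the terms linear in $\tilde X_t$ give \eqref{eq : abstract_Riccati}, after symmetrizing in $(u,\tilde u)$ and using $\bar K^\star=\bar K$;
\item the constant terms give \eqref{eq:Lambda}.
\end{itemize}
The terminal conditions match $Y_T$. Hence $(X,Y,Z)$ solves the FBSDE.

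\textbf{Step 3 (uniqueness and optimality).} Let $(X^i,Y^i,Z^i)$, $i=1,2$, be two solutions. Apply Itô's formula to $\Delta X_t\cdot\Delta Y_t$ and take expectations. Using the monotonicity coming from $Q,H\geq0$, the positivity of $T_{\tilde G_Q},T_{\tilde G_H}$, and $N\geq c$, one gets $c\,\E\int_0^T|\Delta\alpha_t|^2\,\d t\leq 0$. Therefore $\Delta\alpha=0$, then $\Delta X=0$ by uniqueness of the forward equation, then $\Delta Y=0$ by the backward equation. Optimality of $\hat\alpha$ follows from the sufficient condition in the Remark after Proposition \ref{prop : pontryagin_formulation}, since the cost is jointly convex in $(x,\mu,a)$.

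The main obstacle I expect is the global a priori bound for the abstract Riccati equation \eqref{eq : abstract_Riccati}, i.e. converting the nonnegativity of the cost into an $L^2(I\times I)$ bound on $\bar K_t$ rather than just an operator-norm bound. I would first try to bound the operator norm. Then I would recover the Hilbert–Schmidt norm by viewing the abstract Riccati equation as a linear equation in $\bar K$ with the quadratic term treated as a bounded perturbation, and closing with Gronwall.
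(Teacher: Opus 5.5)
Your proposal is correct and follows the same route as the paper's sketch: the ansatz for $Y$ giving the cascade $K\to\bar K\to\Lambda$, global solvability of the abstract Riccati equation from a uniform operator-norm bound on $T_{\bar K_t}$ obtained from $J\geq 0$, then uniqueness of the FBSDE; it also fills in what the paper imports from \cite{de2025linear} and \cite{kharroubi2025stochastic}, and your Gronwall upgrade works because, with $R:=CN^{-1}C^{\top}\in L^{\infty}$, $\|\bar K^{\star}\circ R\circ\bar K\|_{L^2}\leq\|T_{\bar K}\|_{\mathrm{op}}\,\|R\|_{\infty}\,\|\bar K\|_{L^2}$ is linear in the Hilbert--Schmidt norm once the operator norm is controlled. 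One minor point: with merely $L^2$ kernels $G_B,\bar K_t$ and $A,\sigma\in L^2(I)$, the forward coefficients do not satisfy the $\Wc_2$-Lipschitz and growth bounds of Assumption \ref{assumption : regularity_coefficients} uniformly in $u$, so instead of citing Theorem \ref{thm : existence_unicity_process} you should first solve the linear $L^2(I;\R^d)$-valued ODE for $m_t(u)=\E[X_t\,|\,U=u]$ and then a standard linear SDE.
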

\begin{proof}
    The proof of this result is discussed in Appendix \ref{proof: thm_optimal_control_form}.
\end{proof}

\begin{Remark}

\normalfont

\noindent    In fact, the Riccati for $K$ \eqref{eq :standard_Riccati} in Theorem \ref{thm : optimal_control_form} is standard as it can be solved $u$ by $u$ $\lambda(\d u )$-a.e. Moreover, a known formula is known for scalar Riccati equations (see Equation (2.50) in \cite{carmona_probabilistic_2018a}) and we will rely on this for the resolution of $\bar{K}$ in \eqref{eq : abstract_Riccati}.
\end{Remark}

\subsection{Algorithms}

We now illustrate the methodology introduced in the previous sections to solve this class of non exchangeable mean field control problems. We will, in the spirit of the methods developed in \cite{pham2022mean}, develop two  methods to solve the associated control problem. The first one will rely on a global learning method 
as in \cite{HanE16} which we will refer as Deep Graphon while the other one will take advantage of the FBSDE reformulation of the control problem which we will refer as Deep Graphon BSDE in line with \cite{Jentzen17}.   
Notice that even if we apply only two algorithms for the resolution, all kinds of algorithms developed in \cite{pham2022mean} can be used.
Moreover, we give a Riccati solver for the abstract Riccati term $\bar{K}$ over the Hilbert space $L^2(I \times I ; \R^{d \times d})$.

\subsubsection{Riccati solver}
The Riccati equations depend on the values taken by \( U \) and involve integration with respect to a random variable \( \tilde U \) independent with respect to \( U \) and with the same law.
To make the system solvable, we must assume that the interaction function and the coefficients \( Q \) and \( H \) are such that \( G_Q(u,\tilde u) \) and \( G_H(u,\tilde u) \) can be computed analytically. This restricts the class of admissible interaction functions and, for example, prevents us from using the function \( G_2 \) in \eqref{eq : G2_map}.

We are then left to numerically integrate \eqref{eq :standard_Riccati}, \eqref{eq : abstract_Riccati}, and \eqref{eq:Lambda} for \((U_i)_{i \in \llbracket 1,N \rrbracket }\), a given set of samples drawn from $ \Uc([0,1])$.
Algorithm~\ref{algo:ricat} provides the procedure used to solve the Riccati equations.

Note that, to obtain a highly accurate reference solution, we use a very small time step, significantly smaller than the one for  in the neural-network-based algorithm.

\begin{algorithm}[H]
 {\bf Input}: 
 $(u_i)_{i \in \llbracket 1,N \rrbracket}$ samples from $\Uc([0,1])$, $L$ number of time steps, $\Delta t$ the time step

Initialize : for $(i,j) \in \llbracket 1, N \rrbracket^2$
\begin{align}
    \bar K_L(u_i,u_j)=G_H(u_i,u_j) , \quad
    K_L(u_i) = H(u_i) , \quad
    \Lambda_L(u_i)= 0.
\end{align}
\For{ $l=L-1,\dots,0$, }{
Calculate $K$ term: for $ i \in \llbracket 1, N \rrbracket$
\begin{align}
\Phi(u_i) = & B(u_i)^{\top} K_{l+1}(u_i) + K_{l+1}(u_i)^\top B(u_i), \\
        U(u_i) =& C(u_i)^{\top} K_{l+1}(u_i), \\
    K_l(u_i)= & K_{l+1}(u_i) + \Delta t \big( \Phi(u_i) -  U(u_i) N(u_i)^{-1}  U(u_i) \big).
\end{align}
Calculate $\Lambda$ term: for $ i \in \llbracket 1, N \rrbracket$
{\small
\begin{align}
 F(u_i) =& B(u_i)^{\top} \Lambda_{l+1}(u_i) + \frac{1}{M}\sum_{m=1}^N G_A(u_m,u_i)\Lambda_{l+1}(u_m)   + K_{l+1}(u_i) A(u_i) + \frac{1}{M}\sum_{m=1}^N  \bar{K}_{l+1}(u_i,u_m)A(u_m),\\
M(u_i) = &K_{l+1}^\top(u_i) C(u_i) N(u_i)^{-1} C(u_i)^{\top} \lambda_{l+1}(u_i) + \frac{1}{M}\sum_{m=1}^N  
\bar K_{l+1}(u_m,u_i) C(u_i) N(u_m) C^\top(u_m) \Lambda_{l+1}(u_m), \\
\Lambda_l(u_i)= & \Lambda_{l+1}(u_i) + \Delta t \big( F(u_i) - M(u_i) \big).
\end{align}
}
Calculate $\bar K$ term: for $(i,j) \in \llbracket 1, N \rrbracket^2$
{\small
\begin{align}
   \Psi(u_{i},u_{j}) =& K_t(u_i) G_B(u_i,u_j) + G_B(u_j,u_i) K_t(u_j) + B(u_i)^{\top} \bar{K}_{l+1}(u_i,u_j) + \bar{K}_{l+1}(u_j,u_i) B(u_j)  + \\
   & \frac{1}{M}\sum_{m=1}^N \bar K_{l+1}(u_m,u_i)G_A(u_m,u_j) + \frac{1}{M}\sum_{m=1}^N G_A(u_m,u_i) \bar K_{l+1}(u_m,u_j) + G_Q(u_i,u_j),\\
   V(u_i,u_j)= & C(u_i)^{\top} \bar{K}_{l+1}(u_i,u_j), \\
   L(u_i,u_j)=& U(u_i,K_t(u_i))  N(u_i)^{-1} V(u_i,u_j) - V(u_j,u_i) N^{-1}(u_j) U(u_j,K_t(u_j)) - \\
   & \frac{1}{M} \sum_{m=1}^N V(u_m,u_i)N(u_m) V(u_m,u_j), \\
    \bar K_l(u_i,u_j)=& \bar K_{l+1}(u_i,u_j) + \Delta t \Big( \Psi(u_{i},u_{j}) - L( u_{i},u_{j}) \Big).
\end{align}
}
}
\caption{Riccati solver for the non exchangeable mean field terms}\label{algo:ricat}
\footnotesize
\end{algorithm}

\begin{Remark}
    In the linear quadratic case treated below, the values for $K$ and $\Lambda$ can be known analytically in some special cases, therefore we are left to solve only \eqref{eq : abstract_Riccati}.
\end{Remark}
Once $\Lambda_l(u_i)$, $K_l(u_i)$, $\bar K_l(u_i,u_j)$, $(i,j) \in \llbracket 1, N \rrbracket^2$ are computed for $l \in \llbracket 0,L \rrbracket$, we can recover an estimation of $X_{l \Delta t}(u_i)$ for $i \in \llbracket 1, N \rrbracket$ solving the Euler scheme and we  estimate $Y_{l \Delta t}$:
\begin{align}\label{eq : form_adjoint_process}
        Y_{l\Delta}(u_i) \simeq K_{l \Delta t}(u_i) X_{l \Delta t}(u_i) + \frac{1}{M} \sum_{m=1}^M \bar{K}_{l \Delta t }(u_i,u_m) X_{l \Delta t}(u_m)  + \Lambda_{l \Delta t}(U_i),
    \end{align}
    and then recover the optimal control using \eqref{eq:opContLQ}.

\subsubsection{Deep Graphon}
The Deep Graphon algorithm allows us to compute the optimal control 
\( \alpha \) associated with the optimization problem 
\eqref{eq:XForwardLQ}-\eqref{eq:JLQ} for all \( X_0 \) sampled from a 
probability space $\M$.
It uses two networks:
\( \mathcal{T}^{\theta_1} \), parametrized by \( \theta_1 \), and 
\( \mathcal{B}^{\theta_2} \), parametrized by \( \theta_2 \), both taking
values in \( \mathbb{R}^{r \times m} \), where \( m \) denotes the 
dimension of the control. The first network $\Tc^{\theta_1}$ inputs  $(t,u, x) \in [0,T] \times [0,1] \times \R^d$ and the second ones inputs $(t,y) \in [0,T] \times \R^J$ where $y$ represents the moment vector of the distribution. We note $\theta := (\theta_1,\theta_2)$.

To simplify the notation, we assume in Algorithm~\ref{algo:DeepGraphon}
that at each iteration a single distribution is sampled to initialize 
\( X_0 \). The batched version of the algorithm is straightforward.  

\begin{algorithm2e}[H]
\DontPrintSemicolon 
\SetAlgoLined 
\footnotesize
\caption{Deep Graphon algorithm for non exchangeable mean-field algorithm }\label{algo:DeepGraphon}.
\DontPrintSemicolon 
\SetAlgoLined 

{\bf Input}: $N$ batch size, $L$ number of time steps, $\theta$ parameters of the networks, $\rho$ learning rate

\For{each epoch $e$}{
    Sample $(U_{n}, X_{n}) \sim \Uc([0,1]) \otimes \mu$,  $n \in \llbracket 1, N \rrbracket$ where  $\mu$ is sampled in $\M $. \;
    Initialize cost: $\hat C_n = 0$ for  $n \in \llbracket 1, N \rrbracket$, $\Delta t = \frac{T}{L}$ . \;
    \For {$ l=1 \dots,L$}
    {
       Calculate $\tilde W = (\frac{1}{N} \sum_{n=1}^N  |X_{n}|^j)_{j \in \llbracket 1,J \rrbracket}$, \; 
        Calculate $\hat W= \Bc^{\theta_2}(l\Delta t,\tilde W)$, \; 
        Calculate $W_{n}= \Tc^{\theta_1}(l\Delta t, U_{n},X_{n})$ for $n \in \llbracket 1, N \rrbracket$,\;
        Estimate the control  $\alpha_{n} = \sum_{k=1}^r W_{n,r} \hat W_{r} $  for $n \in \llbracket 1, N \rrbracket$.  \;      
        Update cost : for $n \in \llbracket 1, N \rrbracket$,
    \begin{align}
     d_n= & Q(U_n) \big( X_n - \frac{1}{N} \sum_{m=1}^{N} \tilde{G}_Q(U_n,U_m) X_m\big), \\
     e_n= & (X_n   -  \frac{1}{N} \sum_{m=1}^{N} \tilde{G}_Q(U_n,U_m) X_m\big) , \\
    \hat C_n= &  \hat C_n + \Delta t \big[ d_n \cdot e_n\big ] + \alpha_n^{\top} N(U_n) \alpha_n.
\end{align}
 Sample $ g_n \sim \mathcal N(0,I_{d})$ for $n \in \llbracket 1, N \rrbracket$. \;
       Estimate $X$ drift : for $n \in \llbracket 1, N \rrbracket$,
        $$ T^X_n =  A(U_n) + B(U_n) X_n +  \frac{1}{N} \sum_{m=1}^{N}  G_B(U_n,U_m) X_m + C(U_n) \alpha_n.  $$
        Update the state : for $n \in \llbracket 1, N \rrbracket$, 
        $
        X_n= X_n + \Delta t T^X_n + \sigma(U_n) g_n \sqrt{\Delta t}.
        $
    }
    Add terminal cost : for $n \in \llbracket 1, N \rrbracket$,
   \begin{align}
   \hat C_n= &  \hat C_n + H(U_n) \big( X_n - \frac{1}{N} \sum_{m=1}^{N}   \tilde{G}_H(U_n,U_m) X_m \big) \cdot \big( X_n -   \frac{1}{N} \sum_{m=1}^{N} \tilde{G}_H(U_n,U_m) X_m  \big).  
   \end{align}
   Cost function to minimize:
    $J(\theta) = \frac{1}{N} \sum_{n=1}^N \hat C_n$. \;
 $\theta =\theta - \rho \nabla J(\theta)$    
}
\end{algorithm2e}

\subsubsection{Deep Graphon BSDE }
The Deep  Graphon  BSDE Algorithm  \ref{algo:DeepGraphon} allows us to calculate the optimal  $Z=(Z_t)_{0 \leq t \leq T}$ and the cost function $Y_0$  for all $X_0$  sampled in a probability space $\M$ by solving the FBSDE \eqref{eq : FBSDE_linear_quadratic}.\\
It  uses four different networks:
\begin{itemize}
    \item Two networks to approximate $Y_0$ :  $\phi^{\zeta_1}$  with parameter $\zeta_1$  with input in $I \times \R^d$ and output in  $\R^{r \times d}$, $\psi^{\zeta_2}$  with parameter $\zeta_2$  with input in $\R^J$ and output in  $\R^{r \times d}$. We note $\zeta =(\zeta_1,\zeta_2)$.
    \item Two others neural network, $\Tc^{\theta_1}$ parametrized with $\theta_1$ and  $ \Bc^{\theta_2}$ parametrized by $\theta_2$, both  with output in $\R^{r \times d}$,  are used  to learn the $Z$ term in the BSDE.
    The first $\Tc^{\theta_1}$ inputs  $(t,u, x) \in [0,T] \times [0,1] \times \R^d$ and the second ones inputs $(t,y) \in [0,T] \times \R^J$. We still note $\theta= (\theta_1,\theta_2)$.
\end{itemize}

We present  Algorithm \ref{algo:DeepBSDEGraphon}.

\begin{algorithm2e}[H]
\DontPrintSemicolon 
\SetAlgoLined 
\footnotesize
\caption{Deep Graphon BSDE algorithm for non exchangeable mean-field algorithm }\label{algo:DeepBSDEGraphon}.
\DontPrintSemicolon 
\SetAlgoLined 

{\bf Input}: $N$ the batch size, $L$ number of time steps, $\zeta$ ,$\theta$, $\rho$ learning rate

\For{each epoch $e$}{
    Sample $(U_{n}, X_{n}) \sim \Uc([0,1]) \otimes \mu$,  $n \in \llbracket 1, N \rrbracket$ where  $\mu$ sampled in $\bar M $, \;
    Calculate $\tilde W = (\frac{1}{N} \sum_{n=1}^N  |X_{n}|^j)_{j \in \llbracket 1, J \rrbracket} $. \;
    Calculate $\hat W= \psi^{\zeta_2}(\tilde W)$. \; 
    Calculate $W_{n}= \phi^{\zeta_1}( U_{n},X_{n})$ for $n \in \llbracket 1, N \rrbracket$. \;
    Initialize $Y_{n}= \sum_{k=1}^r W_{n,r} \hat W_{r} $ for  $n \in \llbracket 1, N \rrbracket$ , $\Delta t = \frac{T}{L}$.\;
    \For{$ l=1 \dots,L$}
    {
        Calculate $\tilde W = (\frac{1}{N} \sum_{n=1}^N  |X_{n}|^j)_{j \in \llbracket 1, J \rrbracket} $. \; 
        Calculate $\hat W= \Bc^{\theta_2}(l\Delta t,\tilde W)$. \; 
        Calculate $W_{n}= \Tc^{\theta_1}(l\Delta t, U_{n},X_{n})$ for $n \in \llbracket 1, N \rrbracket$. \;
        Estimate the control  $Z_{n} = \sum_{k=1}^r W_{n,r} \hat W_{r} $  for $n \in \llbracket 1, N \rrbracket$.  \;
        Estimate  $Y$ trend: for $n \in \llbracket 1, N \rrbracket,$
        $$
            T_{n}^Y = C_Y(U_{n})  Y_{n} +  2 C_X(U_n) X_n + \frac{2}{N}  \sum_{m=1}^N \Psi_X(U_n,U_m) X_m + \frac{1}{N}\sum_{m=1}^N \Psi_Y(U_n,U_m) Y_m. 
             $$
        Sample $ g_n \sim \mathcal N(0,I_d)$ for $n \in \llbracket 1, N \rrbracket$. \;
        Update  the value function : for $n \in \llbracket 1, N \rrbracket$,
        $\hat Y_{n} =  Y_{n} + T_n^Y \Delta t + Z_{n} g_n \sqrt{\Delta t}$.  \;
        Estimate $X$ drift : for $n \in \llbracket 1, N \rrbracket$,
        $$ T^X_n =A(U_n) +  B(U_n) X_n  + \frac{1}{N} \sum_{m=1}^N G_B(U_n,U_m)) X_m   - \frac{1}{2}  C(U_n) N(U_n)^{-1} C(U_n)^{\top} Y_n.$$
        Update the state : for $n \in \llbracket 1, N \rrbracket$, 
        $
        X_n= X_n + \Delta t T^X_n + \sigma(U_n) g_n \sqrt{\Delta t}.
        $\;
        Store the value function $Y_n = \hat Y_n$ for $n \in \llbracket 1, N \rrbracket$. 
        }
        Calculate target: for $n \in \llbracket 1, N \rrbracket$,
        $
         \hat T_n = 2 \Big( H(U_n) X_n + \frac{1}{N} \sum_{m=1}^N G_H(U_n,U_m) X_m \big). 
        $\;
        Calculate the error
        $J(\zeta,\theta)= \frac{1}{N} \sum_{j=1}^N \Big( \hat T_n - Y_n \Big)^2$.\; 
    $(\zeta,\theta) = (\zeta,\theta) - \rho \nabla J(\zeta,\theta)$.
 }
\end{algorithm2e}

\subsection{Numerical experiments}

We present several numerical experiments illustrating the optimal control of non-exchangeable mean field systems. We first consider a linear–quadratic control problem arising in finance. We then turn to a more complex example, illustrating our approach in a non-toy, nonlinear and non-quadratic setting.
In the sequel we assume that \(X_0\) is sampled from the space of 
distributions \(\M\), generated randomly as a mixture of three 
Gaussian laws.  
At each iteration of the gradient descent algorithm, we sample for 
\(k \in \llbracket 1, 3 \rrbracket\),
\( W_k \sim \mathcal{U}([0,1]) \) and 
\( (\mu_k, \sigma_k) \sim \mathcal{U}([0,1])^2 \),
and then for any \(n \in \llbracket 1, N \rrbracket\) we compute
\[
    X_{0,n} = 
    \frac{1}{\sum_{k=1}^3 W_k}
    \sum_{k=1}^3 W_k\, Z_{k,n},
    \qquad \text{with } 
    Z_{k,n} \sim \mathcal{N}(\mu_k, \sigma_k^2).
\]

All results are obtained using a classical feedforward neural network with 
3 hidden layers of 10 neurons  or the spline KAN with 2 hidden layers of 10 neurons with 5 grid meshes. We take the $G_1$ interaction function. 
In all the tests, we take $r=10$ for the DeepONet network.
Since we must discretize both in time and with a high number of 
trajectories to accurately represent the distributions, GPU memory 
becomes the limiting factor.

For all experiments we use the ADAM gradient descent algorithm with a 
learning rate of \(0.001\), running for \(80{,}000\) iterations.

The networks are trained using \(N\) trajectories to sample distributions 
with Euler schemes discretized using \(L\) time steps.  
Then, for a given distribution \(\mu \in \M\), we estimate the cost 
function \(C^{\text{Alg}}(\mu)\) (depending on the resolution method) associated with 
the optimal control obtained (for example using the time-discretized 
version of \eqref{eq:JLQ} in the linear–quadratic case).  
This cost is estimated using a distribution sampled with the \(N\) 
trajectories generated from the samples of \(X_0\), \(U\), and the 
Brownian motions discretized with \(L\) time steps. In the two examples tested below, we either have:
\begin{itemize}
    \item an estimate of the optimal control and cost function 
          \(C^{\text{anal}}(\mu)\) from the Riccati equation using the 
          same samples of \(X_0\), \(U\), and the Brownian motion,
    \item or an analytical control that allows us to estimate the cost 
          function \(C^{\text{anal}}(\mu)\) with the same samples. 
\end{itemize}
Sampling \(1000\) distributions \((\mu_m)_{1 \leq m \leq 1000}\), we obtain 
\( \big(C^{\text{Alg}}(\mu_m) \big)_{m \in \llbracket 1, 1000 \rrbracket}\) and 
\((C^{\text{anal}}(\mu_m))_{m \in \llbracket 1, 1000 \rrbracket}\), and we can report different 
error measures whenever \(\text{Alg} = DG\) (Deep  Graphon) and 
\(\text{Alg} = BSDE\) (Deep Graphon BSDE)
\begin{align}
\begin{cases}
    E^{\text{abs}}_{\text{Alg}} &:= 
           \frac{1}{1000} \sum_{m=1}^{1000} 
           \left| C^{\text{Alg}}(\mu_m)-C^{\text{anal}}(\mu_m)\right|, \\
    E^{L_2}_{\text{Alg}} &:= 
           \frac{1}{1000} \sum_{m=1}^{1000} 
           \left| C^{\text{Alg}}(\mu_m)-C^{\text{anal}}(\mu_m)\right|^2, \\
    E^{\text{Sup}}_{\text{Alg}} &:=  
           \max_{m \in \llbracket 1,1000 \rrbracket} 
           \left| C^{\text{Alg}}(\mu_m)-C^{\text{anal}}(\mu_m)\right|.
\end{cases}
\end{align}

\subsubsection{A systemic risk model}

The following example is an extension of the mean-field systemic risk model introduced first in \cite{carmona2015systemic} to the case of heterogeneous banks. The following model representing the log monetary reserve of each bank $u \in I$ as the process $(X_t|U=u)_{0 \leq t \leq T}$
\begin{align}
\begin{cases}
        \d X_t &=  \Big( \kappa(U) ( \tilde{\E}_{(\tilde{U},\tilde{X}_t)} \big[ \tilde{G}_{\kappa}(U,\tilde{U}) \tilde{X}_t \big] -X_t) + \alpha_t \Big) \d t  + \sigma(U) \d W_t, \\
        X_0 &= \xi, \\
\end{cases}
\end{align}
where $\kappa \in L^{\infty}(I;\R_{-})$ acts like a mean reversion term, $\sigma \in L^2(I;\R)$ measures the volatility of the bank reserve and  $\tilde{G}_{\kappa} \in L^2(I \times I ; \R)$ is a graphon, i.e a measurable, bounded and symmetric map from $I \times I$ to $\R_+$ measuring the rate of borrowing between banks.  Moreover, 
 $\alpha=(\alpha_t)_{0 \leq \leq T}$ is the control rate of borrowing/ lending to a central bank that aims to minimize the functional cost
\begin{align}\label{eq : cost_functional_systemic_risk}
    J(\alpha) =  \E \Big[ \int_{0}^{T} f \big(U,X_t,\P_{(U,X_t)}, \alpha_t \big) \d t  + g\big(U,X_T, \P_{(U,X_T)}\big) \Big],
\end{align}
where the running and terminal cost functions are given by
\begin{align}
\begin{cases}
    f(u,x,\mu,a) &:=  \eta  \Big( x - \int_{I \times \R^d} \big(\tilde{G}_{\eta}(u,v) \tilde{x} \big)\mu(\d v , \d \tilde{x}) \Big)^2 + a^2 +  q  a \big( x  - \int_I \tilde{G}_q(u,v) \tilde{x} \mu(\d v , \d x) \big), \\
    g(u,x,\mu) &:= r \Big(x - \int_{I \times \R^d}  \big( \tilde{G}_r(u,v) \tilde{x} \big) \mu(\d v , \d \tilde{x}) \Big)^2,
\end{cases}
\end{align}
for some constants $\eta \geq 0, r \geq 0$ and $q$.

This model falls in the setting developed in Section \ref{subsec : linear_quadratic_control_problem} and applying Theorem \ref{thm : optimal_control_form} (to its extended version, see Remark \ref{rmk : cross_terms_control_state}), the optimal control $\hat{\alpha}$ is given by
\begin{align}
    \alpha_t^{\star} =  \frac{1}{2} \Big( q(X_t - \tilde{\E} \big[ \tilde{G}_q(U,\tilde{U}) \tilde{X}_t \big]) - Y_t \Big), \quad \forall t \in [0,T].
\end{align}
where $Y$ is given by \eqref{eq : form_adjoint_process} and where $K, \bar{K} , \Lambda$ are the solution to the associated Riccati equations.

We now present some numerical examples which illustrate the algorithms developed and their accuracy in learning the optimal trajectory $X^{\star}$ associated to the optimal control $\alpha^{\star}$ using $N=10000$ and $L=50$.
We plot it by approximating the optimal trajectory  with one and four  moments and show the results. The results are very stable with the number of moments taken but taking four moments for this simple interaction case slightly degrades the results.



\begin{figure}[H]
    \centering
    \begin{minipage}[t]{0.47\linewidth}
   
    \includegraphics[width=\linewidth]{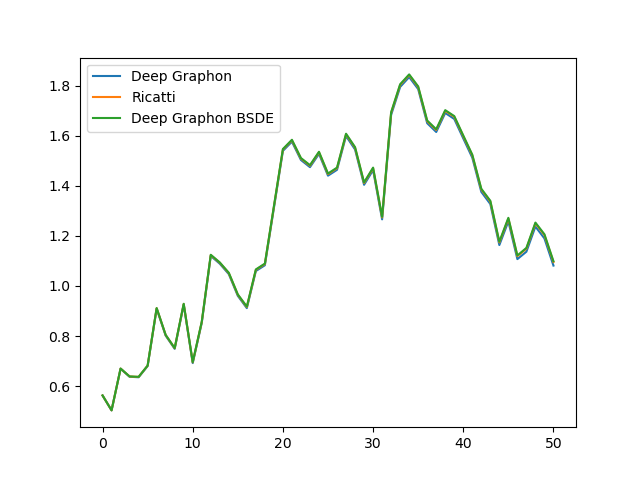}
    \caption*{Optimal trajectory of $(X^{\star}_t)_{0 \leq t \leq T}$ }
    \label{fig:enter-label}
    \end{minipage}
    \begin{minipage}[t]{0.47\linewidth}
 
   \includegraphics[width=\linewidth]{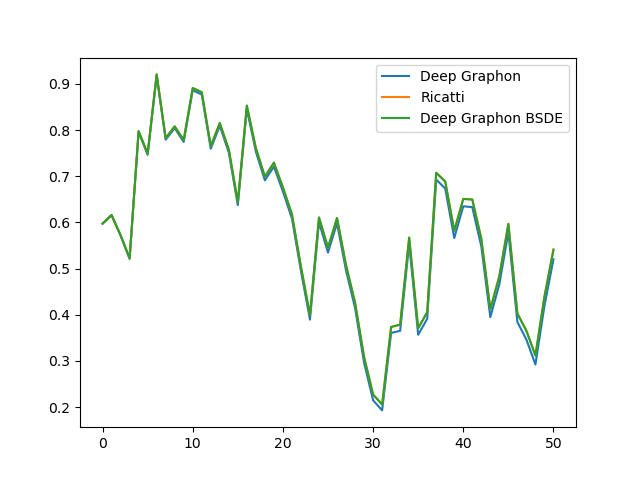}
    \caption*{ Optimal trajectory of $(X^{\star}_t)_{0 \leq t \leq T}$}
    \label{fig:enter-label}
    \end{minipage}
    \begin{minipage}[t]{0.47\linewidth}

    \includegraphics[width=\linewidth]{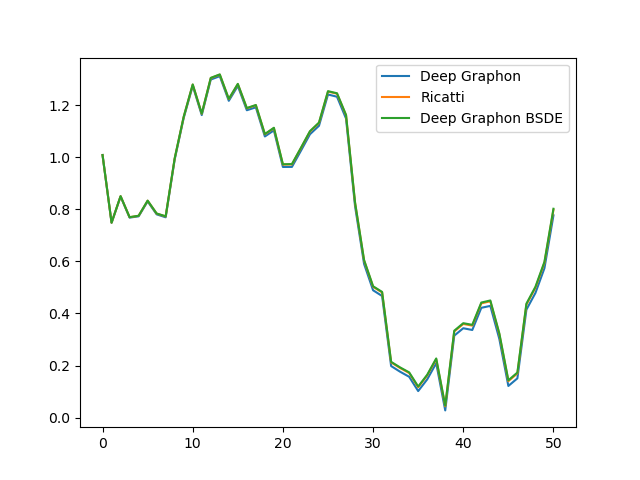}
    \caption*{Optimal trajectory of $(X^{\star}_t)_{0 \leq t \leq T}$}
    \label{fig:enter-label}
    \end{minipage}
     \begin{minipage}[t]{0.47\linewidth}
   \includegraphics[width=\linewidth]{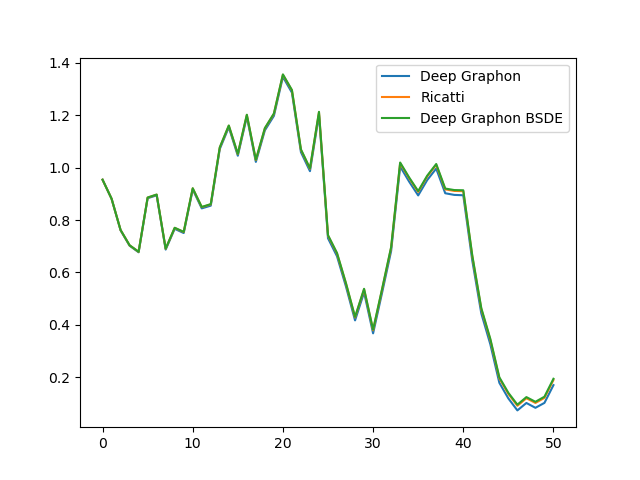}
    \caption*{Optimal trajectory of $(X^{\star}_t)_{0 \leq t \leq T}$}
    \label{fig:trajConMomentLQM1}
    \end{minipage}
    \caption{One moment approximation of  the optimal feedback map $a^{\star}$ and associated optimal state trajectory $X^{\star}$  (with the feedforward network): }
\end{figure}

\begin{figure}[H]
    \centering
    \begin{minipage}[t]{0.47\linewidth}
   
    \includegraphics[width=\linewidth]{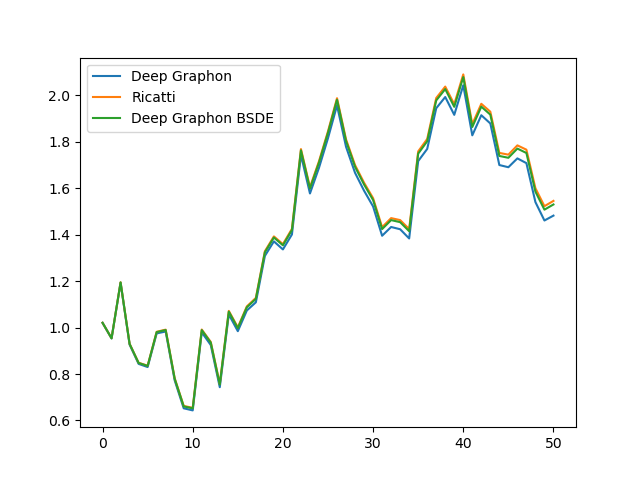}
    \caption*{Optimal trajectory of $(X^{\star}_t)_{0 \leq t \leq T}$ }
    \label{fig:enter-label}
    \end{minipage}
    \begin{minipage}[t]{0.47\linewidth}
 
   \includegraphics[width=\linewidth]{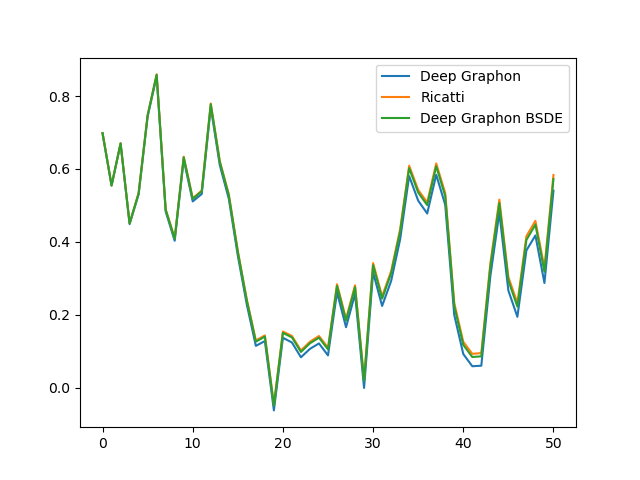}
    \caption*{ Optimal trajectory of $(X^{\star}_t)_{0 \leq t \leq T}$}
    \label{fig:enter-label}
    \end{minipage}
    \begin{minipage}[t]{0.47\linewidth}

    \includegraphics[width=\linewidth]{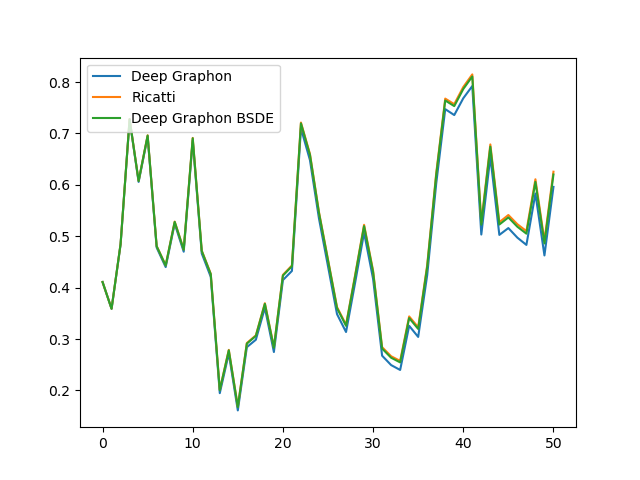}
    \caption*{Optimal trajectory of $(X^{\star}_t)_{0 \leq t \leq T}$}
    \label{fig:enter-label}
    \end{minipage}
     \begin{minipage}[t]{0.47\linewidth}
   \includegraphics[width=\linewidth]{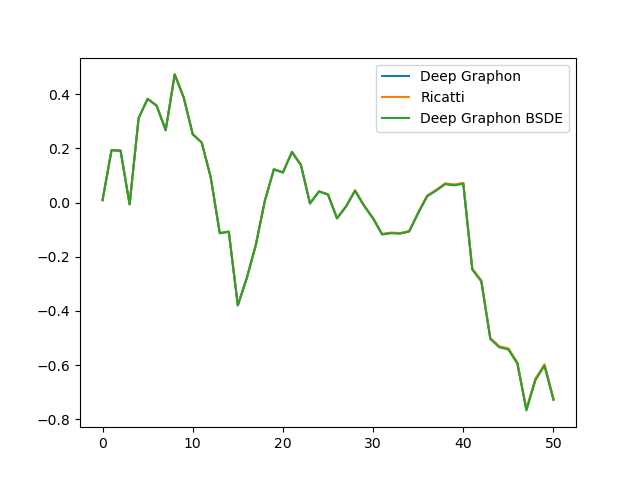}
    \caption*{Optimal trajectory of $(X^{\star}_t)_{0 \leq t \leq T}$}
    \label{fig:trajConMomentLQM1}
    \end{minipage}
\caption{Four moments approximation of  the optimal feedback map $a^{\star}$   (with the feedforward network)}
\end{figure}

\newpage

We also give below two tables summarizing the results obtained sampling 1000 distributions.
\begin{table}[H]
    \centering
     \begin{minipage}[t]{0.47\linewidth}
         \begin{tabular}{|c|c|c|c|c|}
       \hline        
       Criteria  &  $E^{\text{abs}}_{\text{Alg}}$&  $E^{L_2}_{\text{Alg}}$  & $E^{\text{sup}}_{\text{Alg}}$  & time\\
       $DG$  &  7.5e-4& 1.8e-6 &  7.9e-3   &14610\\
       $BSDE$ & 3e-4&  9.7e-7 &  1.5e-2  & 10660\\
       \hline 
    \end{tabular}
 
    \caption{One moment}
    \vspace{0.5cm}
      \end{minipage}
         \begin{minipage}[t]{0.47\linewidth}
    \begin{tabular}{|c|c|c|c|c|}
    \hline 
       Criteria  &  $E^{\text{abs}}_{\text{Alg}}$&  $E^{L_2}_{\text{Alg}}$  & $E^{\text{Sup}}_{\text{Alg}}$  & time\\
         $DG$  & 1e-3 & 3.4e-6 &  1e-2 & 14650\\
       $BSDE$ & 2.7e-4& 5e-7&  6e-3 &  10770\\
       \hline 
   \end{tabular}
    \caption{two moments}
    \vspace{0.5cm}
     \end{minipage}
           \begin{minipage}[t]{0.47\linewidth}
    \begin{tabular}{|c|c|c|c|c|}
    \hline 
       Criteria  &  $E^{abs}_X$&  $E^{L_2}_X$  & $E^{Sup}_X$  & time\\
       $DG$  & 3e-4  &  3.5e-7&  4.7e-3 & 14770\\
       $BSDE$ &  2.5e-4 &  4.4e-7 & 9.7e-3 & 10870\\
       \hline 
    \end{tabular}
    \caption{three moments}
    \vspace{0.5cm}
     \end{minipage}
          \begin{minipage}[t]{0.5\linewidth}
    \begin{tabular}{|c|c|c|c|c|}
    \hline 
       Criteria  &  $E^{abs}_X$&  $E^{L_2}_X$  & $E^{Sup}_X$  & time\\
       $DG$  & 9e-4 & 3.5e-6& 1.3e-2  & 14900\\
       $BSDE$ & 3e-4  & 2e-6  & 2.9e-2 & 11110 \\
       \hline 
    \end{tabular}
    \caption{four moments}
    \vspace{0.5cm}
     \end{minipage}
       \caption{ Error with Riccati as reference depending on the number of  moments $J$ used as inputs of neural networks. Feedforward with tangent 3 layers of 10 neurons}
   \label{tab:LQFeed}
\end{table}

\begin{table}[H]
    \centering
     \begin{minipage}[t]{0.49\linewidth}
         \begin{tabular}{|c|c|c|c|c|}
         \hline 
       Criteria  &  $E^{abs}_X$&  $E^{L_2}_X$  & $E^{Sup}_X$  & time\\
       $DG$  & 1e-4& 3.8e-8 &  8e-4   & 17630\\
       $BSDE$ & 4.5e-5&  2.8e-8 &  2.7e-3  & 13180\\
       \hline 
    \end{tabular}
 
    \caption{one moment}
    \vspace{0.5cm}
      \end{minipage}
         \begin{minipage}[t]{0.49\linewidth}
    \begin{tabular}{|c|c|c|c|c|}
    \hline 
       Criteria  &  $E^{abs}_X$&  $E^{L_2}_X$  & $E^{Sup}_X$  & time\\
         $DG$  & 2.8e-5 & 5.2e-9 &  8e-4 &  17690\\
       $BSDE$ & 5.4e-5& 2.2e-8&  1.6e-3 & 12850 \\
       \hline 
   \end{tabular}
    \caption{two moments}
    \vspace{0.5cm}
     \end{minipage}
           \begin{minipage}[t]{0.49\linewidth}
    \begin{tabular}{|c|c|c|c|c|}
    \hline 
       Criteria  &  $E^{abs}_X$&  $E^{L_2}_X$  & $E^{Sup}_X$  & time\\
       $DG$  &8.5e-5  &  2.1e-8&  7.e-4 & 17400\\
       $BSDE$ &  1.e-4 &  2e-7 & 6.6e-3 &  13060\\
       \hline 
    \end{tabular}
    \caption{three moments}
    \vspace{0.5cm}
     \end{minipage}
          \begin{minipage}[t]{0.5\linewidth}
    \begin{tabular}{|c|c|c|c|c|}
    \hline 
       Criteria  &  $E^{abs}_X$&  $E^{L_2}_X$  & $E^{Sup}_X$  & time\\
       $DG$  & 2.7e-4 & 2.1e-5& 1.4e-1  & 17740 \\
       $BSDE$ & 1.2e-4  & 6.9e-7  & 1.7e-2 & 13190 \\
       \hline 
    \end{tabular}
    \caption{four moments}
    \vspace{0.5cm}
     \end{minipage}
       \caption{ Error with Riccati as reference depending on the number of  moments $J$ used as inputs of neural networks. Spline KAN with 2 layers of 10 neurons, 5 grid meshes.}
   \label{tab:LQKAN}
\end{table}

Results in Tables \ref{tab:LQFeed} , \ref{tab:LQKAN} show the accuracy of the method. The KAN network allow us to get more accurate results. The use of a high number of moments degrades the results.

\subsubsection{A non linear quadratic example}

As an illustration of the algorithm above, we consider a one-dimensional model with 
\begin{align}
\begin{cases}
    b(t,u,x,\mu,a) &= \beta(t,u,x,\mu) + a, \\
    f(t,u,x,\mu,a) &= F(t,u,x,\mu) + \frac{a^2}{2}, \\
    g(u,x,\mu)&= \E_{(U,\xi) \sim \mu} \big[ w(x - G(u,U) \xi) \big],
\end{cases}
\end{align}
where $w$ is a map on $\Cc_2(\R)$, $F$ is a map on $I \times \R^d \times \Mc_{\lambda}$ to be chosen later and $G$ is a symmetric map over $I \times I$. In our setting, we are looking for a value function to the Bellman equation in the form $V(u,t,x,\mu) =  e^{\eta(T-t)} \E_{(U,\xi) \sim \mu} \big[ w(x-G(u,U) \xi) \big]$ for a positive constant $\eta \geq 0$ and by simple calculations, the master field map is given by 
\begin{align}\label{eq : master_field_map}
    \Uc(t,u,x,\mu) &=  e^{\eta(T-t)} \Big( \E_{(U,\xi) \sim \mu} \big[ w'(x- G(u,U) \xi) \big] - \E_{(U,\xi) \sim \mu} \big[ G(U,u) w'(\xi - G(U,u)x) \Big] \Big). 
\end{align}
Taking $w= \text{cos}$ and defining to alleviate notations the following quantities
\begin{align}
\begin{cases}
    A(u,\mu) &:= \E_{(U,\xi) \sim \mu} \big[ \text{cos}(G(u,U) \xi) \big], \\
    B(u,\mu) &:= \E_{(U,\xi) \sim \mu} \big[ \text{sin}(G(u,U) \xi) \big], \\
    k(u,v) &:=\eta + \frac{\sigma^2}{2} \big(1+ G^2(u,v) \big) \\
    C(u,x,\mu) &:= \E_{(U,\xi) \sim \mu} \big[    G(u,U) \text{sin}(\xi)\text{cos}(G(u,U)x) \big] \\
    D(u,x,\mu) &:=\E_{(U,\xi) \sim \mu} \big[    G(u,U) \text{cos}(\xi)\text{sin}(G(u,U)x) \big]  \\
    K_{\text{cos}}(u,\mu) &:= \E_{(U,\xi)\sim \mu} \big[k(u,U)\text{cos}(G(u,U)\xi) \big], \\
    K_{\text{sin}}(u,\mu) &:= \E_{(U,\xi)\sim \mu} \big[k(u,U)\text{sin}(G(u,U)\xi) \big],  \\
    B_{\text{cos}}(t,u,x,\mu) &:= \E_{(U,\xi) \sim \mu} \Big[  \big( G(u,U) \beta(t,U,\xi,\mu) - \beta(t,u,x,\mu) \big) \text{cos}(G(u,U)\xi)          \Big], \\
    B_{\text{sin}}(t,u,x,\mu) &:= \E_{(U,\xi) \sim \mu} \Big[  \big( G(u,U) \beta(t,U,\xi,\mu) - \beta(t,u,x,\mu) \big) \text{sin}(G(u,U)\xi)          \Big], \\
    U_{\text{cos}}(t,u,x,\mu) &:= \E_{(U,\xi) \sim \mu} \Big[  \big(  \Uc(t,u,x,\mu) - G(u,U)\Uc(t,U,\xi,\mu) \big) \text{cos}(G(u,U)\xi)          \Big]  , \\
    U_{\text{sin}}(t,u,x,\mu) &:= \E_{(U,\xi) \sim \mu} \Big[  \big(  \Uc(t,u,x,\mu) - G(u,U)\Uc(t,U,\xi,\mu) \big) \text{sin}(G(u,U)\xi)          \Big],
\end{cases}
\end{align}
choosing the map $F$ to be equal to 
\begin{align}
    F(t,u,x,\mu)=  e^{\eta(T-t)} \Big( \text{cos}(x) \alpha(t,u,x,\mu) + \text{sin}(x) \gamma(t,u,x,\mu) \Big) - \frac{1}{2} e^{2 \eta(T-t)} \Lambda(u,x,\mu)^2,
\end{align}
where we set
\begin{align}
\begin{cases}
     \alpha(t,u,x,\mu) &:= K_{\text{cos}}(u,\mu) + B_{\text{sin}}(t,u,x,\mu)  + U_{\text{sin}}(t,u,x,\mu), \\
    \gamma(t,u,x,\mu) &:= K_{\text{sin}}(u,\mu) - B_{\text{cos}}(t,u,x,\mu) - U_{\text{cos}}(t,u,x,\mu), \\
    \Lambda(u,x,\mu) &:= - \text{sin}(x) A(u,\mu) + \text{cos}(x) B(u,\mu) + C(u,x,\mu) - D(u,x,\mu), 
\end{cases}
\end{align}
the function $V$ defined above satisfies the Bellman equation. Moreover, the optimal control $a^{\star}$ is defined as 
\begin{align}
    a^{\star}(t,u,x,\mu) = - \partial_{\tilde{x}} \frac{\delta}{\delta m}v(t,\mu)(u,x), \quad \text{with } v(t,\mu) = \E_{(U,\xi) \sim \mu} \big[ V(t,U,\xi,\mu) \big]. 
\end{align}

We give below the results of the learning of the value function $V$  using $N=6000$ and $L=50$. We observe that the error is much important than in the LQ case.
This error is independent of the number $J$ of moment taken. Some numerical tries seem to indicate that the error is mainly due to the Euler scheme resolution and the use of KAN does not improve the results.

\begin{table}[H]
    \centering
    \begin{tabular}{|c|c|c|c|c|}
    \hline 
     NbMom &  $E^{abs}_\text{Alg}$&  $E^{L_2}_\text{Alg}$  & $E^{Sup}_\text{Alg}$ & time \\
     1   & 8.9e-3& 1.2e-4 & 4.4e-2 & 42600\\
     2   & 8.6e-3 & 1.1e-4 & 3.2e-2  & 42900\\
     3   & 8.7e-3 & 1.1e-4 &  3.4e-2 &  42800\\
     4   & 8.4e-3  & 1.2e-4 & 3.4e-2 & 42000    \\
     \hline 
 \end{tabular}
  \caption{Result on non LQ problem with interacting network with feed forward : 3 hidden layers with 10 neurons}
\end{table}

\appendix

\renewcommand{\thesection}{\Alph{section}}

\section{Analysis tools over the space $\Mc_{\lambda}$}\label{sec : analysis_Plambda}

In this Appendix, we present the main analysis tools used to handle maps defined over the space $\Mc_{\lambda}$. It essentially relies on the notion of linear functional derivative and/or equivalently on the Lion's derivative for which we recall a nice introduction in \cite{carmona_probabilistic_2018a} (Chapter 5). The notion of derivative and convexity we are going to introduce is fairly inspired from the works in \cite{kharroubi2025stochastic,decrescenzo2024mean} where the maps were instead defined over the space $L^2(I ; \Pc_2(\R^d))$. In fact, both notions of derivatives are fairly similar but for sake of completeness, we reintroduce it in full generality.
\subsection{A notion of derivative}\label{def :derivative_notion}

\begin{Definition}\textnormal{(Linear functional derivative on $\Mc_{\lambda}$).}\label{def : linear_functional_derivativePlambda}

   \begin{enumerate}
       \item     Given a function $v : \Mc_{\lambda}\to \R$, we say that a measurable function 
    \begin{align}
        \frac{\delta }{\delta m }v : \Mc_{\lambda} \times I \times \R^d \ni (\mu,u,x) \mapsto \frac{\delta }{\delta m }v(\mu)(u,x) \in \R,
    \end{align}
    is the linear functional derivative (or flat derivative) of $v$
    \begin{enumerate}
        \item [(1)] For every compact $K \in \Mc_{\lambda}$, there exists a compact $C_K > 0$ such that 
        \begin{align}
            \Big | \frac{\delta}{\delta m }v(\mu)(u,x) \Big | \leq C_K(1 + |x|^2 ),
        \end{align}
        for every $u \in I$, $x \in \R^d$ and $\mu \in K$.
        \item [(2)] For every $\mu,\nu \in \Mc_{\lambda}$, we have
        \begin{align}
            v(\nu) - v(\mu) &= \int_{0}^{1} \int_{I \times \R^d} \frac{\delta }{\delta m }v((1-\theta)\mu + \theta \nu)(u,x) \d (\nu-\mu)(u,x) \d \theta,\\
            &= \int_{0}^{1} \int_{I} \int_{\R^d} \frac{\delta }{\delta m }v((1-\theta)\mu + \theta \nu)(u,x) \d (\nu^u-\mu^u)(x) \lambda(\d u ) \d \theta
        \end{align}
    \end{enumerate}

       \item We  say that the function $v$ admits a continuously differentiable flat derivative if

\begin{itemize}
     \item[(1)] $v$ admits a flat derivative $\frac{\delta}{\delta m }v$ satisfying  $x \mapsto  \frac{\delta}{\delta m }v(\mu)(u,x)$ is Fréchet differentiable with Fréchet derivative denoted by $x \mapsto \partial_{\tilde{x}} \frac{\delta}{\delta m } v(\mu)(u,x)$ for all $(\mu,u) \in \Mc_{\lambda} \times I$
    \item [(2)] The map $(\mu,x) \mapsto \partial_{\tilde{x}} \frac{\delta}{\delta m }v(\mu)(u,x)$ is continuous from $\Mc_{\lambda} \times I $ into $\R^d$ for $\lambda-\text{a.e}$ $u \in I$.
    \item [(3)] For every compact set $K \subset \Mc_{\lambda}$, there exists a constant $C_K \geq 0$ such that
    \begin{align}
        \Big |\partial_{\tilde{x}} \frac{\delta}{\delta m }v(\mu)(u,x) \Big| \leq C_K \big( 1+ |x| \big),
    \end{align}
    for every $u \in I$, $x \in \R^d$ and $\mu \in K$.
\end{itemize}

\item We say that $v : [0,T] \times \Mc_{\lambda} \to \R$ is of class $\tilde{\Cc}^{1,2}([0,T] \times \Mc_{\lambda})$ if
\begin{enumerate}
    \item [(1)] For every $\mu \in \Mc_{\lambda}$, the map $t \mapsto v(t,\mu)$ is continuously differentiable on $[0,T]$ and we denote by $(t,\mu) \mapsto \partial_t v(t,\mu)$ its time derivative.
    \item [(2)] For every $t \in [0,T]$, the derivative $\frac{\delta}{\delta m}v(t,\mu)(u,x)$ exists and its measurable in all its arguments.
    \item [(3)] $\frac{\delta}{\delta m}v(t,\mu)(u,x)$ is twice continuously differentiable on $\R^d$, i.e for any $(t,\mu,u) \in [0,T] \times \Mc_{\lambda} \times I$, the map $x \mapsto \frac{\delta}{\delta m}v(t,\mu)(u,x)$ and the gradient and the Hessian matrix 
    \begin{align}
        \partial_{\tilde{x}} \frac{\delta}{\delta m}v : [0,T] \times \Mc_{\lambda} \times I \times \R^d  \to \R^d, \quad \partial^2_{\tilde{x}} \frac{\delta}{\delta m} v : [0,T] \times \Mc_{\lambda} \times I \times \R^d  \to \R^{d \times d}
    \end{align}
    satisfy the following growth conditions: There exists a positive constant $C$ s.t
    \begin{align}
    \begin{cases}
            \Big | \partial_{\tilde{x}}  \frac{\delta}{\delta m}v(t,\mu)(u,x) \Big | \leq C \big( 1+ |x| + \Wc(\mu, m \otimes \delta_0), \\
            \Big | \partial^2_{\tilde{x}}  \frac{\delta}{\delta m} v(t,\mu)(u,x) \Big |  \leq C,
        \end{cases}
    \end{align}
    for every $t \in [0,T]$, $u \in I$, $x \in \R^d$ and $\mu \in \Mc_{\lambda}$.
    \item [(4)] The map $[0,T] \times \Mc_{\lambda} \ni (t,\mu) \mapsto \partial_t v(t,\mu)$ is continuous.
    \item [(5)] For every $u \in I$ and every compact set $H \subset \R^d$, the functions $\partial_{\tilde{x}} \frac{\delta}{\delta m}v(t,\mu)(u,x)$ and $\partial^2_{\tilde{x}} \frac{\delta}{\delta m }v(t,\mu)(u,x)$ are continuous functions of $(t,\mu)  \in [0,T] \times \Mc_{\lambda}$ uniformly in $x \in H$.
\end{enumerate}
\end{enumerate}

\begin{Remark}
\normalfont 

\noindent 
\begin{enumerate}
    \item [$\bullet$]   In the core of the paper, the map $v$ will be mainly defined over the space $I \times \R^d \times \Mc_{\lambda}$. In this setting, the flat derivative of $v$ is defined as a measurable map 
    \begin{align}
        \frac{\delta }{\delta m }v : I \times \R^d \times \Mc_{\lambda} \times I \times \R^d \ni (u,x,\mu,\tilde{u},\tilde{x}) \mapsto \frac{\delta }{\delta m }v(u,x,\mu)(\tilde{u},\tilde{x}) \in \R,
    \end{align}
    and you can extend the previous points from Definition \eqref{def : linear_functional_derivativePlambda}.
    \item [$\bullet$] Some example of standard type of functions for which we can compute the linear functional derivatives are discussed in \cite{decrescenzo2024mean,kharroubi2025stochastic}.
\end{enumerate}

\end{Remark}

\end{Definition}

\subsection{A notion of convexity}

\begin{Definition}\textnormal{(Convexity on $\Mc_{\lambda}$).}

Given a function $v :\Mc_{\lambda} \to \R$ supposed to admit a continuously differentiable flat derivative in the sense of Definition \ref{def : linear_functional_derivativePlambda}, we say that $v$ is convex if for every $\mu,\mu' \in \Mc_{\lambda}$, we have
\begin{align}
    v(\mu') - v(\mu) \geq  \E \Big[ \partial_{\tilde{x}} \frac{\delta}{\delta m}v(\mu)(U,X) \cdot (X' - X) \Big],
\end{align}
where $(U,X) \sim \mu$ and $(U,X') \sim \mu'$.

More generally, if $v$ is now defined on $I \times \R^d \times \Mc_{\lambda}$, we say that $v$ is  said to be convex if for every $(x,\mu) \in \R^d \times \Mc_{\lambda}$ and $(x',\mu') \in \R^d \times \Mc_{\lambda}$ and for $\lambda(\d u )-a.e$, we have
\begin{align}
    v(u,x',\mu') - v(u,x,\mu) \geq  \partial_x v(u,x,\mu) \cdot (x'-x) + \E \Big[ \partial_{\tilde{x}} \frac{\delta}{\delta m} v(u,x,\mu)(U,X) \cdot (X' -X) \Big],
\end{align}
where $(U,X) \sim \mu$ and $(U,X') \sim \mu'$.
\end{Definition}

\section{Optimal control related results}

\subsection{Proof of Theorem \ref{thm : existence_unicity_process} }\label{subsec : prop_existenceunicitySDE}

 The proof follows from a standard  fixed point argument on the complete metric space $(\Mc_{\lambda}(I \times \Cc^d_{[t,T]}) ; \Wc_2)$ and is a straightforward adaptation of the proof of Theorem 2.6 in \cite{decrescenzo2024mean}. Indeed, define the map $\Phi$ as
    \begin{align}
        \Mc_{\lambda}(I \times \Cc^d_{[t,T]}) \ni \nu \mapsto \Phi(\nu)= \P_{(U,X^{\nu})},
    \end{align}
    where $X^{\nu}$ denotes the solution to the standard SDE with Lipschitz coefficients
    \begin{align}\label{eq : SDE_uncoupled}
    \begin{cases}
        \d X_s^{\nu} &= b(U,X_s^{\nu}, \nu_s, \alpha_s) \d s  + \sigma(U,X_s^{\nu}, \nu_s, \alpha_s) \d W_s, \\
        X_t^{\nu} &= \xi,
    \end{cases}
    \end{align}
    and where $\nu_s = x_s \sharp \nu$ with $x_s : I \times \Cc^d_{[t,T]} \ni (u,\omega) \mapsto x_s(u,\omega)=(u,\omega_s) \in I \times \R^d$ denotes the projection map. Under Assumption \ref{assumption : regularity_coefficients}, the SDE \eqref{eq : SDE_uncoupled} admits a continuous $\F$-adapted process and therefore $\P_{(U,X^{\nu})}$ can be viewed as a probability measure on $\Pc^{\lambda}(I \times \Cc^d_{[t,T]})$. Moreover,under standard estimates it is easy to verify that $\E \Big[ \underset{0 \leq t \leq T}{\text{ sup}} | X_t^{\nu}|^2 \Big] < \infty$ such that $\P_{(U,X^{\nu})} \in \Mc_{\lambda}(I \times \Cc^d_{[t,T]})$ and therefore $\Phi : \Mc_{\lambda}(I \times \Cc^d_{[t,T]}) \to \Mc_{\lambda}(I \times \Cc^d_{[t,T]})$ is well defined. Now, from standard estimates, we prove that $\Phi$ has a unique fixed point $\bar{\nu}$ in $\Mc_{\lambda}(I \times \Cc^d_{[t,T]})$. Indeed, we have (assuming for simplicity $b=0$)
    \begin{align}
        \E \Big[ \underset{t \leq s \leq r}{\text{ sup}} |X_s^{\nu} - X_s^{\mu}|^2 \Big] &\leq C \E \Big[ \int_{t}^{r} |\sigma(U,X_s^{\nu}, \nu_s, \alpha_s) - \sigma(U,X_s^{\mu},\mu_s,\alpha_s)|^2 \d s \Big] \\
        &\leq  C \E \Big[ \int_{t}^{r} \big(   \E \big[ |X_s^{\nu} - X_s^{\mu}|^2 \big] + \Wc_2(\nu_s,\mu_s)^2 \big) \d s \\
        &\leq C \E \Big[ \int_{t}^{r} \big(\E \big[ \underset{t \leq q \leq s}{\text{ sup}} |X_q^{\nu}-X_q^{\mu}|^2 \big] + \Wc_2(\nu_s,\mu_s)^2 \big) \d s,
    \end{align}
    where $C$ is a positive constant which can change from line to line. From Grönwall's lemma, we end up with
    \begin{align}
        \Wc_2(\Phi(\nu),\Phi(\mu) )^2 \leq  \E \Big[ \underset{t \leq s \leq T}{\text{ sup}} |X_s^{\nu} - X_s^{\mu}|^2 \Big] \leq C(T-t) \Wc_2(\nu,\mu)^2,
    \end{align}
    and where we  used $\Wc_2(\nu_s,\mu_s) \leq \Wc_2(\nu,\mu)$ for any $t \leq s \leq T$. Now, from standard arguments we conclude that the sequence $(\nu^{(k)})_{k \in \N}$ defined as
    \begin{align}
    \begin{cases}
        \nu^{(k+1)} = \Psi(\nu^{(k)}), \\
        \nu^{(0)} \text{ arbitrary point in $\Mc_{\lambda}(I \times \Cc^d_{[t,T]})$.}
    \end{cases}
    \end{align}
    is a Cauchy sequence for $\Wc_2$ and converges on $\Mc_{\lambda}(I \times \Cc^d_{[t,T]})$ towards a unique fixed point $\bar{\nu}$. Considering the associated process $X^{\bar{\nu}}$ yields the unique solution to Theorem \ref{thm : existence_unicity_process}.

\subsection{Proof of Proposition \ref{prop : pontryagin_formulation}}\label{appendix : proof_optimality_alpha}

The proof is essentially an adaption of the proof in \cite{kharroubi2025stochastic} and therefore we  just give the main ideas.

\vspace{1mm}

\noindent \textbf{Step n°1 : Definition of the variation process.}

\noindent Given an admissible control $\beta = (\beta_t)_{0 \leq t \leq T} \in \Ac$, we define the control $\delta := \beta - \alpha$. We notice that $\delta \in \Ac$ since $A$ is a convex set.  We now define the $\R^d$-valued variation process $V=(V_t)_{0 \leq t \leq T}$ associated to the process $X$ defined in \eqref{eq : SDE_label_state} as the solution to the following SDE
\begin{align}
\begin{cases}
    \d V_t &= \bigg[ \gamma_t \cdot V_t + \tilde{\E} \Big[ \partial_{\tilde{x} } \frac{\delta}{\delta m } b(U,X_t, \P_{(U,X_t)},\alpha_t)(\tilde{U},\tilde{X}_t) \tilde{V}_t \Big] + \eta_t \cdot \delta_t \bigg] \d t \\
    &\quad + \bigg[ \hat{\gamma}_t \cdot V_t + \tilde{\E} \Big[ \partial_{\tilde{x} } \frac{\delta}{\delta m } b(U,X_t, \P_{(U,X_t)},\alpha_t)(\tilde{U},\tilde{X}_t) \tilde{V}_t \Big] + \hat{\eta}_t \cdot \delta_t \bigg] \d W_t, \\
    V_0 &=0,
\end{cases}
\end{align}
and where we denoted for every $t \in [0,T]$
\begin{align}
\begin{cases}
    \gamma_t &= \partial_x b(U,X_t,\P_{(U,X_t)},\alpha_t), \quad \eta_t = \partial_a b(U,X_t,\P_{(U,X_t)},\alpha_t), \\
    \hat{\gamma}_t &= \partial_x \sigma(U,X_t,\P_{(U,X_t)},\alpha_t), \quad \hat{\eta}_t =  \partial_{a} \sigma(U,X_t,\P_{(U,X_t)},\alpha_t).
\end{cases}
\end{align}
We note that under the Assumptions \ref{assumption : regularity_coefficients}-\ref{assumption2: regularity_coefficients} that $(X,V)$ is an SDE satisfying the Assumptions of Theorem \ref{thm : existence_unicity_process} and hence, $(X,V)$ and hence $V$ is uniquely defined.

We now denote the family of admissible controls $\alpha^{\epsilon} = \alpha + \epsilon \delta$ for $\epsilon \in [0,1]$ and by $X^{\epsilon} := X^{\alpha^{\epsilon}}$ the associated controlled state process. Then, one can check under Assumptions \ref{assumption2: regularity_coefficients}.
\begin{align}\label{eq : variation_process_definition}
    \E \Big[ \underset{0 \leq t \leq T}{\text{sup }} |\frac{X_t^{\epsilon} - X_t}{\epsilon} - V_t |^2 \Big] \underset{\epsilon \to 0}{\to } 0.
\end{align}

\noindent \textbf{Step n°2 : Gâteaux derivative of $J$.}

\noindent Now, relying in \eqref{eq : variation_process_definition}, on Assumptions \ref{eq : variation_process_definition} and on Definition of the adjoint process $Y$ in \eqref{eq : FBSDE_system_weak_formulation}, one can check after some straightforward computations that
\begin{align}\label{eq : gateaux_differentiable_J}
    \underset{\epsilon \to 0}{\text{ lim} }  \frac{1}{\epsilon} \Big( J(\alpha + \epsilon(\beta - \alpha)) - J(\alpha) \Big)  =\E \Big[\int_{0}^{T}  \partial_{a} H((U,X_t,\P_{(U,X_t)},Y_t,Z_t)  \cdot (\beta_t - \alpha_t) \d t \Big].
\end{align}
Now relying on the convexity assumption of the map $a \mapsto H(u,x,\mu,y,z,a)$ for any $(u,x,\mu,y,z) \in I \times \R^d \times \Pc_2^{\lambda}(I \times \R^d) \times \R^d \times \R^{d \times n}$, one can derive the statement of Proposition \ref{prop : pontryagin_formulation} from the Gateaux derivatives of $J$ in \eqref{eq : gateaux_differentiable_J}

\subsection{Additional notations for the linear quadratic case}\label{subsec:  notations_lq_example}

We work under the complete Hilbert space $L^2(I \times I ;\R^{d \times d})$ and $L^2(I;\R^d)$ and we introduce the  following notations
    \begin{enumerate}
      \item We say that a kernel $K \in L^2(I \times I ; \R^{d \times d})$ is symmetric if the following holds
      \begin{align}\label{eq : symmetric_kernels}
    K(v,u)^{\top} = K(u,v), \quad \text{ $\d u \otimes \d v$ a.e.} 
    \end{align}
We denote by $L^2_{\text{sym}}(I \times I ;\R^{d \times d})$ the space of kernels $K \in L^2(I \times I; \R^{d \times d})$ satisfying \eqref{eq : symmetric_kernels}.
    \item  Given a kernel $K \in L^2(I \times I ; \R^{d \times d})$,  its associated linear integral Hilbert-Schimdt operator $T_K$ is defined as
\begin{align}\label{eq : operator_definition}
    L^2(I;\R^{d}) \ni f \mapsto T_K(f)(\cdot) = \int_I K(\cdot,v) f(v) \d v  \in L^2(I;\R^{d}).
\end{align}
We say that $T_K$ is a symmetric non-negative  operator on $L^2(I;\R^d)$ if its associated kernel satisfies \eqref{eq : symmetric_kernels} and $\langle f, T_Kf \rangle_{L^2(I;\R^d)} \geq 0$ for every $f \in L^2(I;\R^d)$.
    \item Given a kernel $K \in L^2(I \times I ; \R^{d \times d})$, we define  $K^* \in L^2(I \times I ; \R^{d \times d})$ as
   \begin{align}\label{eq : tilde_kernel}
    K^*(u,v): = K(v,u)^{\top}.
    \end{align}
When $K \in L^2_{\text{sym}}(I \times I ; \R^{d \times d})$, we have $K^*= K$. We also notice that $T_{K^*} = (T_K)^*$ where $(T_K)^*$ denotes the adjoint operator of $T_K$.
  \item Given two  kernels $K,W \in L^2(I \times I ; \R^{d \times d})$, it is easy to check that the  operator $T_{K} \circ T_{W}$ is associated with the kernel $\big(K \circ W \big)$ defined by
  \begin{align}\label{eq : kernel_product}
      \big(K \circ W \big)(u,v):= \int_I  K(u,w)W(w,v) \d w,
  \end{align}
   \item Given $L \in L^{\infty}(I ; \R^{d \times d})$, we define the multiplicative operator associated to $L$ as the linear operator on $L^2(I ; \R^{d})$ defined by 
  \begin{align}\label{eq : multiplicative_operator_definition}
    L^2(I;\R^{d}) \ni f \mapsto \Big( I \ni w \mapsto M_L(f)(w) = L^{w}f(w) \in \R^d \Big) \in L^2(I;\R^d).
\end{align}
Given $K,W \in L^2(I \times I ;\R^{d \times d})$ and $L \in L^{\infty}(I ;\R^{d \times d})$, the operator $T_K \circ M_L \circ T_W$ is associated with the kernel $\big(K \circ L \circ W \big)$ defined by
  \begin{align}\label{eq : kernel_form_multiplicativity}
    \big(K \circ L \circ W \big)(u,v) \ = \int_I K(u,w) L(w) W(w,v) \d w  .
\end{align}
    \end{enumerate}

 Given $(E, d)$ a Polish metric space (e.g. $\R^d$, $\Pc_2(\R^d)$) endowed with its natural Borel $\sigma$-algebra., we also introduce the following spaces.
   \begin{enumerate}
       \item [(1)]
       \begin{align}
        \begin{cases}
            L^2(I;E)  := \big \lbrace \varphi : I \ni u \mapsto \varphi(u) \text{ measurable  and $\int_I  d(\varphi(u), e)^2 \d u < + \infty \big \rbrace$}, \\
            L^2(I \times I; E)) := \big \lbrace  \varphi : I \times I \ni (u,v) \mapsto \varphi(u,v) \text{ measurable and } \int_{I \times I} \d (\phi(u,v),e)^2 \d u < + \infty \big \rbrace.  
        \end{cases}
       \end{align}
    \end{enumerate}
      Moreover, if $E$ is a normed metric space with norm $(\lVert \cdot \rVert)$ $(\text{e.g } \R^d$), we introduce
    \begin{enumerate}
       \item [(2)]
       \begin{align}
        \begin{cases}
            L^{\infty}(I;E) := \big \lbrace  \varphi : I \ni u \mapsto \varphi(u) \text{ measurable and $\underset{u \in I}{\text{ess sup }} \lVert \varphi(u) \rVert_{E} < + \infty. \big \rbrace$ },  \\
            L^{\infty}(I \times I ;E) :=  \big \lbrace  \varphi : I \times I \ni (u,v) \mapsto \varphi(u,v) \text{ measurable and $\underset{u,v \in I}{\text{ess sup }} \lVert \varphi(u,v) \rVert_{E} < + \infty. \big \rbrace$ }
        \end{cases}
        \end{align}
   \end{enumerate}
\subsection{Proof of Theorem \ref{thm : optimal_control_form}}\label{proof: thm_optimal_control_form}

The proof is essentially a combination of the proofs in \cite{kharroubi2025stochastic,de2025linear} and therefore we just give the main ideas of the proofs in the current setting.

\vspace{1mm}

\noindent \textbf{Step n°1 : Ansatz form for $Y$.}

\noindent Motivated by the standard empirical link between the value function and the adjoint process $Y$ in the stochastic maximum principle and the linear-quadratic parametrization, we are looking for a solution to the FBSDE \eqref{eq : FBSDE_linear_quadratic} by guessing 
\begin{align}\label{eq : ansatz_Y}
    Y_t = K_t(U) X_t + \tilde{\E} \big[ \bar{K}_t(U,\tilde{U}) \tilde{X}_t \big] + \Lambda_t, \quad 0 \leq t \leq T,
\end{align}
where $K \in \Cc^1\big([0,T]; L^{\infty}(I;\S^d_+) \big)$, $\bar{K} \in \Cc^1([0,T]; L^2_{\text{sym}}\big(I \times I ; \R^d \times \R^d) \big)$ and $\Lambda \in \Cc^1([0,T] ;L^2(I;\R^d) \big)$ are to be determined through Riccati equations. Plugging the ansatz \eqref{eq : ansatz_Y} into the dynamics of $X$ and after some tedious but straightforward computations, we obtain the Riccati equations stated in Theorem \ref{thm : optimal_control_form}.

\vspace{1mm}

\noindent \textbf{Step n°2 : Solvability of the Riccati equations}

\noindent The solvability of $(K,\bar{K},\Lambda)$, i.e., the proof of their existence and unicity over respectively the spaces $\Cc^1 \big([0,T];L^{\infty}(I;\S^d_+)\big)$, $\Cc^1 \big([0,T]; L^2_{\text{sym}}(I \times I; \R^{d \times d}) \big)$ and $\Cc^1([0,T]; L^2(I;\R^d) \big)$ is done in \cite{de2025linear} (see \cite{de2025optimal} for an extension to the common noise setting). Essentially, the Riccati on $K$ is standard and can be solved by standard theory. However, the Riccati equation for $\bar{K}$ is of a new type due to the heterogenous structure of interactions and its solvability requires explicitly the assumptions on the model coefficients resulting in the positivity of $J$ (see \eqref{eq : positivity_cost_functional}). Indeed, we get a fundamental relation (see  Proposition 3.1 in \cite{de2025linear}) and we are able to bound the operator norm of $T_{\bar{K}_t}$ uniformly in time which gives us an a-priori estimate on the solution and helps us to go backward in time and get global existence and uniqueness over $[0,T]$. Finally, $\Lambda$ is a linear ODE on the Hilbert space $L^2(I;\R^d)$ and can be solved froms standard theory on infinite dimensional spaces.

Therefore, we identified a solution $(X,Y,Z)$ to the FBSDE. This solution is in fact unique and the unicity is proved in \cite{kharroubi2025stochastic} so we refer to the references therein for a full proof of this result.

 \newpage

\begin{small}
\bibliographystyle{plain}   
\bibliography{References.bib}   
\end{small}

\end{document}